\numberwithin{equation}{section}
\newtheorem{theorem}{Theorem}[section]
\newtheorem{lemma}[theorem]{Lemma}
\newtheorem{proposition}[theorem]{Proposition}
\newtheorem{thmx}{Theorem}
\theoremstyle{definition}
\newtheorem{definition}[theorem]{Definition}
\theoremstyle{remark}
\newtheorem{remark}{Remark}[section]
\newtheorem*{ack}{Acknowledgment}
\def\re{\operatorname{Re}}
\def\im{\operatorname{Im}}
\def\arg{\operatorname{arg}}
\def\sign{\operatorname{sign}}
\def\logarea{\operatorname{logarea}}
\def\supp{\operatorname{supp}}
\def\dist{\operatorname{dist}}
\def\R{\mathbb{R}}
\def\C{\mathbb{C}}
\def\N{\mathbb{N}}
\def\Z{\mathbb{Z}}
\def\D{\mathbb{D}}
\def\bC{\overline{\C}}
\def\LO{O}
\def\Tan{\mathrm{T}}
\def\Broom{\mathrm{B}}
\newcommand{\eps}{\varepsilon}
\title[Differential equations with solutions having only real zeros]{Second order linear differential equations with a basis
of solutions having only real zeros}
\author{Walter Bergweiler}
\address{Mathematisches Seminar, Christian--Albrechts--Uni\-versi\-t\"at zu Kiel, 
24098 Kiel, Germany}
\email{bergweiler@math.uni-kiel.de}
\author{Alexandre Eremenko}
\address{Department of Mathematics, Purdue University,
West Lafayette, IN 47907, USA}
\email{eremenko@math.purdue.edu}
\author{Lasse Rempe}
\address{Department of Mathematical Sciences, University of Liverpool, Liverpool L69 7ZL, UK}
\email{l.rempe@liverpool.ac.uk}
\dedicatory{To the victims of the Russian aggression in Ukraine}
\subjclass[2020]{34M10, 34M05, 30D15}
\keywords{meromorphic function, entire function, linear differential equation, 
Bank--Laine function, 
Spei\-ser class,
quasiconformal surgery, conformal gluing}
\begin{document}

\begin{abstract}
Let $A$ be a transcendental entire function of finite order.
We show that if the differential equation $w''+Aw=0$ has two 
linearly independent solutions with only real zeros, then 
the order of $A$ must be an odd integer or one half of an odd integer.
Moreover, $A$ has completely regular growth in the sense of Levin
and Pfluger. These results follow from a more general geometric theorem, which classifies symmetric local homeo\-morphisms from the plane to the sphere
for which all zeros and poles lie on the real axis, and which have only finitely many singularities over finite non-zero values.
\end{abstract}

\maketitle

\section{Introduction and results} \label{intro}
For any entire function~$A$, all solutions of the differential equation
\begin{equation}\label{1}
w''+Aw=0
\end{equation}
are entire.
We consider the question of when this equation has two
linearly independent solutions 
which have only real zeros.  For a polynomial $A$ this is possible only when $A$
is constant \cite[Theorem~3]{HSW}.  On the other hand, there are transcendental 
coefficients $A$ for which this happens. However, we
will show that if $A$ has finite order, then this is possible only 
in special cases. In particular, the order must be an odd integer or
one half of an odd integer.

We begin by making some general remarks on the equation~\eqref{1},
all of which can be found in~\cite{Laine1993}.
Let $w_1$ and $w_2$ be two linearly independent solutions. Then
their Wronskian determinant
\begin{equation}\label{Wronskian}
W=W(w_1,w_2)=w_1 w_2'-w_1' w_2
\end{equation}
is constant. A pair of solutions
$(w_1,w_2)$ is called {\em normalized} if $W=1$.

The ratio of two linearly independent solutions
$F=w_2/w_1$ satisfies the Schwarz equation
\begin{equation}\label{Sch}
\frac{F'''}{F'}-\frac{3}{2}\left(\frac{F''}{F'}\right)^2=2A.
\end{equation}
The function $F$ is meromorphic in $\C$ and locally univalent. All
meromorphic locally univalent meromorphic functions arise in this way.
A normalized pair can be recovered from $F$ by the formulae
\[
w_1^2=\frac{1}{F'},\quad w_2^2=\frac{F^2}{F'}.
\]

The product $E=w_1w_2$ of a normalized pair of solutions of~\eqref{1}
has the property 
\begin{equation}\label{defE}
E(z)=0\Rightarrow E'(z)\in\{\pm1\}.
\end{equation}
Every entire function satisfying~\eqref{defE}
is the product of a normalized pair of solutions of~\eqref{1}, with $A$ given by
\begin{equation}\label{AE}
4A=-2\frac{E''}{E}+\left(\frac{E'}{E}\right)^2-\frac{1}{E^2} .
\end{equation}
Conversely, every entire function $E$ satisfying~\eqref{AE} for some entire 
function $A$ satisfies~\eqref{defE}.
The functions $E$ and $F$ are related by the formula
\begin{equation}
\label{EF}
E=\frac{F}{F'}.
\end{equation}
Note that zeros of $E$ correspond to zeros and poles of~$F$.

We conclude that studying  zeros of linearly independent solutions 
of~\eqref{1} is essentially equivalent to investigating the zeros of
functions satisfying~\eqref{defE}, or the zeros and poles of locally univalent functions.
The relation between the coefficient $A$ in~\eqref{1},
the function $E$ satisfying \eqref{defE}
and the locally univalent function $F$ 
is given by~\eqref{Sch}, \eqref{AE} and~\eqref{EF}.

Functions satisfying~\eqref{defE} play an important role in the 
work of Steven Bank and Ilpo Laine \cite{BL1,BL2}, and they are now called
\emph{Bank-Laine functions}, or BL functions for short. Since this work, 
there has been a substantial interest in BL functions of finite order.
We refer to the surveys \cite{G} and \cite{LT} which cover the literature
before 2008, and to the introductions of the recent papers~\cite{BE1,BE2,L1,L2}.
In particular, much attention has been paid to the exponent of convergence $\lambda(E)$
and the order $\rho(E)$ of a BL function~$E$;
see \cite[p.~7]{Laine1993} for the definitions.

When $A$ is transcendental, all solutions $w$ of \eqref{1} have infinite order.
However, it is possible that the product $E$ of two solutions has finite order. For example, when
\[
A=p''-(p')^2-e^{4p}
\]
with a polynomial~$p$, we have a normalized pair of solutions
\[
w_{1,2}(z)=\frac{1}{\sqrt{2}}\exp\!\left(- p(z)\mp \int_0^z e^{2p(t)}\dif t\right) .
\]
The order of $A$ and of the corresponding BL function $E=w_1 w_2=\exp(-2p)/2$ is the degree of~$p$
and hence an integer.

Bank and Laine conjectured that if $\lambda(E)<\infty$, then
$\rho(A)\in\N$, as is the case in this elementary example.
Langley~\cite{Langley1998} constructed
\emph{non-elementary}
examples of BL functions and corresponding coefficients $A$ of finite integer
order in 1998; further examples were constructed in~\cite{DL,Langley2001}. Recently,
examples of BL functions of any order in $[1,\infty)$ and corresponding coefficients $A$
of any order in $(1/2,\infty)$ were constructed in~\cite{BE1,BE2}, resolving the
conjecture of Bank and Laine in the negative. Note that the order of a
transcendental BL function is at least 1; 
see~\cite{S}, \cite[Corollary~1]{R} and~\cite[Theorem~1]{Shen1987}.

In the recent papers \cite{L1} and \cite{L2}, Jim Langley started to investigate
real BL functions $E$ of finite order for which all zeros are real.
So in this case the associated differential equation~\eqref{1} has two linearly
independent solutions with only real zeros.
As already mentioned, for a polynomial $A$ this is possible only when $A$
is constant \cite[Theorem~3]{HSW}.
In fact, if $A$ has degree~$n$, then the exponent of convergence of the 
non-real zeros of the product of two linearly independent solutions is equal to $(n+2)/2$
\cite[Theorem~1]{Gundersen1986}.
So it is surprising that there exist non-elementary 
BL functions of finite order with only real zeros.

Our first result says that instead of assuming that $E$ has finite order,
it is enough to assume that $A$ has finite order.
Note that we always have $\rho(A)\leq\rho(E)$ by~\eqref{AE}.
\begin{theorem}\label{theorem0}
Let $A$ and $E$ be entire functions satisfying~\eqref{AE}. Suppose that 
the zeros of $E$ lie on finitely many rays emanating from the origin.
Then $\rho(E)<\infty$ if and only if $\rho(A)<\infty$.
\end{theorem}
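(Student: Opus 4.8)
The implication $\rho(E)<\infty\Rightarrow\rho(A)<\infty$ needs no hypothesis on the zeros: by~\eqref{AE}, $A$ is a combination of the meromorphic functions $E''/E$, $(E'/E)^2$ and $1/E^2$, each of order $\le\rho(E)$, and $A$ is entire, so $\rho(A)\le\rho(E)$ as already observed above. For the converse, assume $\rho(A)<\infty$ and that $E$ has only real zeros; the plan is to deduce $\rho(E)<\infty$ in two steps.

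\emph{Step 1 (a Nevanlinna estimate, reducing matters to counting the zeros of $E$).} Rewrite~\eqref{AE} as $1/E^2=-4A-2E''/E+(E'/E)^2$ and apply the proximity function $m(r,\cdot)$. The lemma on the logarithmic derivative bounds $m(r,E'/E)$ and $m(r,E''/E)$ by $O(\log r+\log T(r,E))$ off a set of finite measure, while $m(r,1/E^2)=2m(r,1/E)=2\bigl(T(r,E)-N(r,0,E)\bigr)+O(1)$, so that
\[
2T(r,E)\le 2N(r,0,E)+T(r,A)+O\bigl(\log r+\log T(r,E)\bigr)
\]
outside that exceptional set. Since $\rho(A)<\infty$ gives $T(r,A)=O(r^{\rho(A)+\eps})$, this inequality shows: \emph{if} $\lambda(E)<\infty$, so that $N(r,0,E)=O(r^{\lambda(E)+\eps})$, then $T(r,E)=O\bigl(r^{\max(\lambda(E),\rho(A))+\eps}\bigr)$ off the exceptional set, and hence, by monotonicity of $r\mapsto T(r,E)$, everywhere; thus $\rho(E)\le\max(\lambda(E),\rho(A))<\infty$. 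So everything reduces to proving that the real zeros of $E$ have finite exponent of convergence.

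\emph{Step 2 (bounding the number of real zeros --- the crux).} The zeros of $E$ are the zeros of $w_1$ together with the zeros of $w_2$, that is, the poles and zeros of the locally univalent meromorphic function $F=w_2/w_1$, which satisfies $F'/F=1/E$ and, by~\eqref{Sch}, has Schwarzian derivative $S_F=2A$; all of these points lie on $\R$. Hence $F$ restricts to a zero-free holomorphic function on the upper half-plane $\{z:\im z>0\}$, on which $\log F$ is single-valued and locally univalent with $(\log F)'=1/E$ and Schwarzian $2A+1/(2E^2)$. One must show that finiteness of $\rho(A)$ (the order of $S_F=2A$), together with the fact that \emph{every} preimage of $0$ and of $\infty$ under $F$ lies on the single line $\R$, forces $n(r,0,F)+n(r,\infty,F)=O(r^{c})$ for some $c$. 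A Nehari-type estimate from $|S_F|=2|A|\le 2M(r+1,A)$ shows that consecutive zeros of $F$, and consecutive poles of $F$, are spaced at distance $\gtrsim M(r+1,A)^{-1/2}$ on $\R$, but this local bound only yields $n(r,0,E)\lesssim r\,M(r+1,A)^{1/2}$, which does not control $\lambda(E)$. The required improvement has to come from a \emph{global} argument in the half-plane --- estimating, via extremal length or harmonic measure (or via the covering-space and surgery techniques underlying the main geometric theorem of the paper), the number of disjoint tracts along which $F$ can tend to $0$ or to $\infty$ as $z$ approaches $\R$. This is precisely the step where the hypothesis that \emph{both} $w_1$ and $w_2$ have only real zeros is genuinely used: for a single solution, or a one-sided family of zeros, the exponent of convergence may be infinite even when $\rho(A)<\infty$, so only a global exploitation of the full real-zero structure of $F$ can succeed.
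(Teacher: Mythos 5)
Your Step 1 is sound and is essentially the paper's starting point: applying $m(r,\cdot)$ to \eqref{AE} together with the lemma on the logarithmic derivative and the first fundamental theorem gives $2T(r,E)\le 2N(r,1/E)+T(r,A)+\LO(\log r+\log T(r,E))$ off a set of finite logarithmic measure. But Step 2, which you yourself identify as the crux, is not a proof: you observe that the Sturm/Nehari spacing bound only yields $n(r,1/E)\lesssim r\,M(r+1,A)^{1/2}$ (exponentially large when $A$ is transcendental of finite order) and then merely assert that the needed improvement ``has to come from a global argument'' via extremal length, harmonic measure, or the paper's surgery machinery, without carrying any such argument out. As it stands, the converse implication $\rho(A)<\infty\Rightarrow\rho(E)<\infty$ is therefore not established.

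The missing idea is also simpler and of a different nature than what you sketch: one does not need to prove $\lambda(E)<\infty$ at all. The paper argues by contradiction and invokes a theorem of Miles on entire functions of \emph{infinite} order whose zeros lie on finitely many rays: for such a function, $N(r,1/E)=o(T(r,E))$ as $r\to\infty$ outside a set of logarithmic density zero. Feeding this into exactly your Step 1 inequality gives $(2-o(1))T(r,E)\le T(r,A)$ off a small exceptional set, which is incompatible with $\rho(A)<\infty$ and $\rho(E)=\infty$; hence $E$ has finite order. Note that this uses only that the zeros of $E$ lie on the real axis (two rays), not any global half-plane structure of $F=w_2/w_1$ or the interplay of both solutions, so your closing claim that ``only a global exploitation of the full real-zero structure of $F$ can succeed'' is not accurate for this theorem --- that kind of geometric analysis is what the paper deploys later, for Theorems~\ref{thm:corollary1} and~\ref{theorem1}, not here.
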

Theorem~\ref{thm:corollary1} below shows that
we actually have $\rho(A)=\rho(E)$ if the zeros of $E$ are real.

Theorem~\ref{theorem0} yields that
studying entire coefficients $A$ of finite order for which the 
differential equation~\eqref{1} has two linearly independent solutions 
with only real zeros is equivalent to studying 
BL functions of finite order with real zeros.

A meromorphic function $f$
is called {\em real} if it maps $\R$ into $\R\cup\{\infty\}$.
This is equivalent to $f(\overline{z})=\overline{f(z)}$ for all $z\in\C$.
Functions (not necessarily analytic) which satisfy this last equality
will be called {\em symmetric}. For other objects,
like subsets of the plane, the word {\em symmetric} will mean
invariant under complex conjugation.

We say that an infinite real sequence without finite accumulation
points is {\em one-sided} if it is bounded from above or below, and {\em two-sided}
otherwise.

The results of Langley \cite{L1,L2} on BL functions with real zeros
can be summarized as follows. Recall that $\lambda(E)$ denotes the 
\emph{exponent of convergence} of the zeros of $E$.

\begin{thmx}\label{thmA}
Let $E$ be a real Bank-Laine function of finite order with only real zeros and let
$A$ be given by~\eqref{AE}. 
\begin{enumerate}
\item[$(a)$] 
If the zeros of $E$ form an infinite one-sided sequence,
then $\lambda(E)\geq 3/2$.
Moreover, if $\lambda(E)=3/2$, then $\rho(E)=\rho(A)=3/2$.
\item[$(b)$] 
If the zeros of $E$ form an infinite two-sided sequence,
then either $A$ is constant, or $A$ is transcendental and
$\lambda(E)\geq 3$.
Moreover, if $\lambda(E)=3$, then $\rho(E)=\rho(A)=3$.
\end{enumerate}
\end{thmx}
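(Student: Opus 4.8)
The plan is to pass from $E$ to a real normalized pair of solutions $(w_1,w_2)$, to analyse the logarithmic derivative $g=w_1'/w_1$ via the differential equation, and to read off the density bounds on the zeros of $E$ from the combination of reality and finite order. \emph{Reductions.} By~\eqref{AE} the coefficient $A$ is real, so one may choose a real normalized pair $(w_1,w_2)$, and then $E=w_1w_2$ is real. Since $W(w_1,w_2)=1$, the solutions $w_1$ and $w_2$ have no common zero, so if $E$ has only real zeros then each of $w_1,w_2$ has only real zeros, and by the Sturm separation theorem their zeros strictly interlace on $\R$. Hence $n(r,1/w_1)$ and $n(r,1/w_2)$ differ by at most $1$, so $\lambda(E)=\lambda(w_1)=\lambda(w_2)$, and the zero set of $E$ is one-sided, resp.\ two-sided, exactly when that of $w_1$ is. Writing the Hadamard factorization $E=P\,e^{q}$ with $P$ the canonical product over the zeros of $E$ and $q$ a real polynomial, we have $\rho(E)=\max\{\lambda(E),\deg q\}$, while $\rho(A)\le\rho(E)$ by~\eqref{AE} as noted above. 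Thus, once the lower bounds for $\lambda(E)$ are established, the two ``moreover'' assertions reduce to proving $\deg q\le 1$ in case~$(a)$ and $\deg q\le 3$ in case~$(b)$.

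\emph{The main estimate.} The function $g=w_1'/w_1$ is real meromorphic with only simple poles, all real, and satisfies the Riccati equation $g'+g^2=-A$. Following the approach of Langley \cite{L1,L2}, I would use the Levin--Ostrovskii factorization $g=\phi\psi$, where $\psi$ is real entire carrying essentially the zeros of $w_1$ and $\phi$ is a real meromorphic function mapping the upper half-plane into one of the two closed half-planes bounded by $\R$; in the one-sided case one obtains the sharper conclusion that $\phi$ maps the upper half-plane into its closure and is of restricted growth, so that $\psi$ carries the bulk of the growth of $g$. Substituting $g=\phi\psi$ into the Riccati equation and invoking the logarithmic-derivative lemma together with Nevanlinna theory in a half-plane (Tsuji's characteristic), one bounds the Tsuji counting function of the poles of $g$ in terms of $\rho(A)$ and $\phi$, and hence controls $n(r,1/w_1)$. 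The target is
\[
\liminf_{r\to\infty}\frac{\log n(r,1/w_1)}{\log r}\ \ge\ \frac32
\]
in case~$(a)$, with $3/2$ replaced by $3$ in case~$(b)$, unless $A$ is constant.

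\emph{Why the exponents are $3/2$ and $3$.} The mechanism is most transparent through the Liouville transformation $\zeta=\int\sqrt{-A}$, which converts $w''+Aw=0$ into a small perturbation of $W''+W=0$ on the set where $A<0$, so that there $w_1$ has zeros spaced by roughly $\pi$ in the variable $\zeta$; since $A$ is transcendental of finite order, $|A|$ is only of polynomial size along $\R$ on the intervals where $w_1$ oscillates, and the zero count over $[0,r]$ is comparable to $\int_{0}^{r}\sqrt{|A(t)|}\,dt$ taken over the oscillatory set. For the whole real axis to carry only real zeros, $A$ must oscillate on $\R$ with a phase that is real on the relevant part of $\R$: in case~$(a)$ this concerns only a half-line, which allows a phase behaving like a half-odd power of $t$ (whence $\sqrt{|A|}\sim t^{1/2}$ and $n(r)\sim r^{3/2}$, close to the behaviour of the (non Bank--Laine) model $z^{-1/2}\sin(2z^{3/2})$), whereas in case~$(b)$ the phase must be real on all of $\R$, forcing it to grow at least like $t^{3}$ and hence $n(r)\gtrsim r^{3}$. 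I expect this to be the main obstacle: one must convert the qualitative hypothesis into a sharp numerical lower bound on the zero density, and on the non-oscillatory ends of $\R$ one must control the factor $e^{q}$ by a Phragm\'en--Lindel\"of argument in a half-plane. An attractive alternative is the geometric route announced in the abstract, in which $F$ is treated as a symmetric local homeomorphism $\C\to\bC$ whose zeros and poles alternate on $\R$ apart from finitely many exceptions, and a length--area ($\logarea$) estimate forces the alternation rate, hence $\lambda(E)$, to be at least $3$, resp.\ $3/2$ in the presence of a one-sided end.

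\emph{Rigidity in the extremal cases.} When $\lambda(E)$ attains its threshold value there is no slack in the estimates above: $P$ must be of order exactly $\lambda(E)$ and of minimal admissible genus, $\phi$ must be, up to a rational factor, of order at most $1$, and $q$ must satisfy $\deg q\le 1$ in case~$(a)$ and $\deg q\le 3$ in case~$(b)$. Together with $\rho(E)=\max\{\lambda(E),\deg q\}$ and $\rho(A)\le\rho(E)$ this yields $\rho(E)=\rho(A)=\lambda(E)$.
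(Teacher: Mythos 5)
Your outline correctly identifies the framework of Langley's proofs \cite{L1,L2} (real normalized pair, interlacing of zeros, Levin--Ostrovskii factorization of $w_1'/w_1$, Tsuji characteristic), but the core of Theorem~\ref{thmA} -- the numerical lower bounds $\lambda(E)\geq 3/2$ and $\lambda(E)\geq 3$ -- is never actually established. The paragraph ``Why the exponents are $3/2$ and $3$'' is a heuristic about the Liouville transformation and how fast a ``phase'' must grow; it does not convert into an estimate of $n(r,1/w_1)$, and the crucial dichotomy between the one-sided and two-sided cases (which is the entire content of the theorem) is asserted rather than derived. This is precisely where the work lies, whether one follows Langley or the present paper, which does not reprove Theorem~\ref{thmA} but instead proves the stronger Theorem~\ref{thm:corollary1} in section~\ref{analytic-proof} by a different route: an upper bound for $E$ on the real axis (Propositions~\ref{prop:lemma5} and~\ref{prop:FGsingularities}), a lower bound between consecutive zeros (Lemmas~\ref{lemma6} and~\ref{lem:C2}), and then P\'olya peaks and limits of subharmonic functions to pin down the possible growth exponents. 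Note also that in case $(b)$ you must rule out nonconstant polynomial $A$ (this needs \cite[Theorem~3]{HSW} or \cite{Gundersen1986}); your sketch passes silently from ``$A$ not constant'' to ``$A$ transcendental''.

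Two further points are genuine gaps rather than omissions of routine detail. First, the rigidity step is logically incomplete: from $\rho(E)=\max\{\lambda(E),\deg q\}$, a bound on $\deg q$, and $\rho(A)\leq\rho(E)$ you can at best conclude $\rho(A)\leq\lambda(E)$; the asserted equality $\rho(A)=\lambda(E)$ needs a \emph{lower} bound for $\rho(A)$, which in the paper comes from the asymptotic proportionality of $m(r_k,1/E)$ to $\log M(r_k,E)$ together with $m(r,A)=2m(r,1/E)+\LO(\log r)$ (see \eqref{3p2} and \eqref{3p4}), and which your argument nowhere supplies; likewise the bounds $\deg q\leq 1$ resp.\ $\deg q\leq 3$ are asserted (``no slack'') without proof. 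Second, the opening reduction is stated backwards: you are given $E$ with real zeros and must show that \emph{this} $E$ is the product of a real normalized pair -- you cannot simply ``choose a real normalized pair'', since its product need not be $E$. This is repairable (a solution vanishing at a real point is a unimodular multiple of a real solution, and then $w_2=E/w_1$ is real; the zero-free case needs a separate word), but as written it is a gap, and the Sturm interlacing you invoke presupposes exactly this reality of the pair factoring $E$.
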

Langley's proofs actually yield a more general result, stated as Theorem~\ref{thmB}
below.
He also constructed examples for which we have equality in the
estimates of $\lambda(E)$ in~$(a)$ and~$(b)$.
Of course, if $A$ is constant, then the possible forms of $E$ can be determined
explicitly. 

We will strengthen Theorem~\ref{thmA} as follows.
\begin{theorem}\label{thm:corollary1}
Let $E$ and $A$ be as in Theorem~\ref{thmA}.
\begin{enumerate}
\item[$(a)$]
If the zeros of $E$ form an infinite one-sided sequence,
then there exists $n\in\N$ with $n\geq 2$ such that $\lambda(E)=\rho(E)=\rho(A)= n-1/2$.
\item[$(b)$]
If the zeros of $E$ form an infinite two-sided sequence
and $A$ is non-constant,
then there exists $n\in\N$ with $n\geq 2$ such that $\lambda(E)=\rho(E)=\rho(A)= 2n-1$.
\end{enumerate}
\end{theorem}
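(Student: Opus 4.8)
The plan is to deduce Theorem~\ref{thm:corollary1} from the general geometric classification theorem described in the abstract, applied to a suitable ratio of solutions of~\eqref{1}. By~\eqref{EF} there is a locally univalent meromorphic function $F$ with $E=F/F'$; it equals the ratio $w_2/w_1$ of a normalized pair of solutions of~\eqref{1}, it is determined by $E$ up to a multiplicative constant, and, since $E$ and hence (by~\eqref{AE}) $A$ is real, the constant may be chosen so that $F$ is real. Thus $F$ is a symmetric local homeomorphism from $\C$ to $\bC$. By~\eqref{EF} the zeros and poles of $F$ together form exactly the sequence of zeros of $E$; in particular they all lie on $\R$, and they constitute a one-sided sequence precisely when the zeros of $E$ do. Finally, by~\eqref{Sch} the Schwarzian of $F$ equals $2A$, which has finite order since $E$ does. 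The one hypothesis of the geometric theorem that is not immediate is that $F$ should have merely finitely many singularities over finite non-zero values, and I would verify this separately: a singularity of $F$ over a value $a\in\C\setminus\{0\}$ produces an unbounded domain on which $F$ stays close to $a$ while $F'$ stays correspondingly small, so that $|E|=|F/F'|$ is large there, and a Denjoy--Carleman--Ahlfors type bound then limits the number of such pairwise disjoint domains in terms of $\rho(E)<\infty$. Singularities over $0$ and $\infty$ are not controlled in this way, and indeed must occur along $\R$ as soon as $E$ has infinitely many zeros, which is why the classification asks only for finiteness over $\C\setminus\{0\}$.

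Granting these verifications, the geometric classification describes $F$ near $\infty$ by a finite, conjugation-symmetric family of unbounded tracts covering a neighbourhood of $\infty$, on each of which $F$ has a standard form; it expresses the common value $\rho(A)=\rho(E)$ as an explicit combinatorial invariant of this family, and it is also the source of the completely regular growth of $A$. It then remains to evaluate that invariant under the two sets of hypotheses. In case~$(a)$ there is a single end of $\R$ at which the zeros and poles of $F$ accumulate; the demand that \emph{all} of them lie on $\R$ --- rather than spreading over several rays, as they must in the polynomial case by~\cite{HSW} and~\cite[Theorem~1]{Gundersen1986} --- constrains the local model at that end so tightly that the invariant is forced to equal $n-\tfrac12$ for some integer~$n$, the bound $n\ge 2$ (equivalently $\rho(E)\ge 3/2$) being already supplied by Theorem~\ref{thmA}$(a)$. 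In case~$(b)$ the zeros accumulate at both ends of $\R$, and the conjugation symmetry together with the compatibility of the two local models forces the invariant to be an odd integer, so that it equals $2n-1$ with $n\ge 2$ by Theorem~\ref{thmA}$(b)$. In either case the classification also yields $\lambda(E)=\rho(E)$, because near each accumulation end the real zeros of $F$ are as dense as the order permits.

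I expect the decisive difficulty to be the geometric classification theorem itself, that is, the conversion of the purely analytic input $\rho(A)<\infty$ into genuine geometric rigidity of $F$; the quasiconformal surgery and conformal gluing in the keyword list presumably enter here, both in the proof of the classification and in the construction of the matching examples. Within the deduction above, the delicate step is the parity statement: that the combinatorial invariant can take only the values $n-\tfrac12$, respectively $2n-1$, and no others. This amounts to deciding exactly which of the standard asymptotic models --- essentially Sturm--Liouville type equations on a half-line or on the whole line --- admit a basis of solutions with only real zeros, and ruling out every remaining order. The rest is bookkeeping.
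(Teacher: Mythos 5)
Your route---reduce Theorem~\ref{thm:corollary1} to the geometric classification theorem announced in the abstract (Theorem~\ref{theorem1} of the paper)---is indeed one of the two routes the paper itself takes, but as a proof attempt it has a genuine gap: all of the actual content of the statement, namely the quantization $\lambda(E)=\rho(E)=\rho(A)\in\{n-1/2\}$ resp.\ $\{2n-1\}$, is deferred to the unproven classification, and even the deduction you sketch leaves the decisive points as expectations rather than arguments. You yourself identify ``the parity statement'' as the delicate step and propose to settle it by ``deciding exactly which of the standard asymptotic models admit a basis of solutions with only real zeros,'' which is essentially the original question restated, not a reduction of it. In the paper the exact values $m-1/2$ (one-sided case) and $m-1$ with $m$ even (two-sided case), together with $\lambda(E)=\rho(E)$ and the identification $\rho(A)=\rho(E)$, are built into the statement of Theorem~\ref{theorem1}, and their proof occupies sections~\ref{section6}--\ref{section9} (cutting along asymptotic curves, the explicit quasiregular models of type $\Broom$ and $\Tan_a$, gluing, and the Teichm\"uller--Wittich--Belinskii theorem); nothing in your proposal substitutes for this, nor for the paper's alternative self-contained analytic proof in section~\ref{analytic-proof} (upper bound $|E(x)|=O(|x|)$ on $\R$ via Propositions~\ref{prop:lemma5} and~\ref{prop:FGsingularities}, a lower bound between consecutive zeros forced by the Bank--Laine normalization, Lemma~\ref{lem:C2}, and rescaling limits along P\'olya peaks which force the limit indicator to be $c\sin(\sigma|\theta|)$ with $\sigma$ quantized).

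Within the deduction itself there are further soft spots. Theorem~\ref{theorem1} requires not only finitely many singularities of $F^{-1}$ over $\C^*$ but also infinitely many over $0$ or $\infty$; your assertion that these ``must occur along $\R$ as soon as $E$ has infinitely many zeros'' is false as stated ($F=\tan$ has infinitely many real zeros and poles and no singularities over $0$ or $\infty$)---one needs $A$ non-constant, which in case $(a)$ itself requires an argument, and in the paper both hypotheses are supplied by Langley's Theorem~\ref{thmB}. Your finiteness argument for $m$ (a tract over $a\in\C^*$ forces $|E|$ large, then a Denjoy--Carleman--Ahlfors count) is in the right spirit, but the step ``$F'$ stays correspondingly small'' is precisely the technical content of Proposition~\ref{prop:FGsingularities} (Koebe distortion on logarithmic tracts, using that $F'/F=1/E$ has finite order; note that $F$ itself generally has infinite order, so finite-order results cannot be applied to $F$ directly). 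The claim $\lambda(E)=\rho(E)$ ``because the real zeros are as dense as the order permits'' is likewise only an assertion. Finally, since your $F$ is already meromorphic while Theorem~\ref{theorem1} only concludes that $F\circ\phi$ is meromorphic for some symmetric homeomorphism $\phi$, you should note that local univalence forces $\phi$ to be a real affine map, so that the conclusions indeed transfer to $E=F/F'$ and to $A$; this point is missing.
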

\begin{remark}\label{remark1a}
For an arbitrary real BL function $E$ with only real zeros,
not necessarily of finite order, there are
no restrictions on the exponent of convergence of $E$. Indeed, a result
of Shen \cite{Shen} says that any set without finite accumulation points
is the zero set of a BL function.
\end{remark}

Theorem~\ref{thm:corollary1} will be a corollary of a more general
result, stated as Theorem~\ref{theorem1} below.
However, we will also give a direct proof of Theorem~\ref{thm:corollary1} in
section~\ref{analytic-proof}.
This proof is analytic in nature, while the proof of the more
general Theorem~\ref{theorem1} is geometric.
The proofs of Theorems~\ref{thm:corollary1}  and~\ref{theorem1} are independent 
of each other.

To state Theorem~\ref{theorem1} and Theorem~\ref{thmB}, we
introduce some terminology. All surfaces in this paper are oriented
and have countable base.
A continuous map of surfaces $F\colon X\to Y$ is called
{\em topologically holomorphic}
if for every point $p\in X$ there are local coordinates at $p$ and at $F(p)$
in which $F$ has the form $z\mapsto z^n,$ where $n$ is a positive integer.
According to Sto\"ilow~\cite{Stoilow1938}, all open discrete maps are topologically
holomorphic.

The points where $n\geq 2$ are called {\em critical points}; their images are called \emph{critical values}. The critical values correspond to the 
{\em algebraic singularities} of the inverse $F^{-1}$.
The function $F$ is a local homeo\-morphism if and only if there are no critical points.

The {\em transcendental singularities} of the inverse are defined as follows;
cf.~\cite{Bergweiler1995}. (There it is assumed that $F$ is meromorphic, but
the definition extends to topologically holomorphic functions without change.)
Let $a\in \bC$. Suppose that $D\mapsto U(D)$ 
associates to every topological disk containing $a$
a connected component $U(D)$ of $F^{-1}(D)$, in such a way
that $U(D_1)\subset U(D_2)$ when $D_1\subset D_2$. (Note that $D\mapsto U(D)$ is determined by its values on any base of neighborhoods of~$a$.)
If $\bigcap_{D} U(D)=\emptyset$, then we say that $D\mapsto U(D)$ is a \emph{transcendental singularity} of~$F^{-1}$
over~$a$. In this case, the sets $U(D)$ are called
\emph{tracts} of $F$ over $a$;
any set containing such a tract is called a
\emph{neighborhood} of this singularity.
So we say that a sequence $(z_n)$ in $\C$ {\em converges} to the singularity
$U$ if for every neighborhood $U(D)$, all but finitely many
members of this sequence belong to $U(D)$. 

If there exists $D$ such that  $F(z)\neq a$ for all $z\in U(D)$ (resp.\ such that $F\colon U(D)\to D\setminus\{a\}$ is a universal covering map), then 
the singularity, and the tract $U(D)$, are called \emph{direct} (resp.\ \emph{logarithmic}).
We note that there can be more than one transcendental singularity
over the same point. The number of transcendental (or direct or logarithmic)
singularities over a point $a$ is just the number of different choices 
$D\mapsto U(D)$. For example, the inverse of 
$F(z)=\exp\exp z$ has infinitely many logarithmic singularities over
both $0$ and $\infty$, one logarithmic singularity over~$1$,
and no other singularities.

We also note that $F^{-1}$ has a transcendental singularity over $a$ if and only if 
$a$ is an asymptotic value of~$F$. This means that there exists a curve $\gamma$
tending to~$\infty$ such that $F(z)\to a$ as $z\to\infty$, $z\in\gamma$.
Each neighborhood $U(D)$ then contains a ``tail'' of this curve~$\gamma$.

Langley's paper in fact contains the following generalization of Theorem~\ref{thmA}.
\begin{thmx}\label{thmB}
Let $E$ be a real Bank-Laine function of finite order with only real zeros and let
$A$ and $F$ be as in~\eqref{AE} and~\eqref{EF}.

If $A$ is non-constant, then
the inverse $F^{-1}$ has infinitely many logarithmic singularities over
$0$ and~$\infty$, but the number $m$ of singularities over points in
$\C^*:=\C\setminus\{0\}$ is finite. Moreover, we have the following:
\begin{enumerate}
\item[$(a)$]
If the zeros of $E$ form an infinite one-sided sequence,
then $A$ is non-constant, $m\geq 2$ and $\lambda(E)\geq m-1/2$.
\item[$(b)$]
If the zeros of $E$ form an infinite two-sided sequence
and $A$ is non-constant, then $m\geq 4$ and $\lambda(E)\geq m-1$.
\end{enumerate}
\end{thmx}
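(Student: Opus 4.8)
The plan is to deduce Theorem~\ref{thmB} from the two structural facts already available, namely the Bank--Laine property \eqref{defE} (equivalently \eqref{AE}) and the relation $E = F/F'$, together with a careful analysis of the geometry of the locally univalent function $F$ near the real axis. First I would record that since $E$ is a real entire function with only real zeros, and $F' = F/E$, the logarithmic derivative $F'/F = 1/E$ is real and has only real (simple) poles; hence $F$ is a real meromorphic locally univalent function whose zeros and poles are all real (they are exactly the zeros of $E$, with $F$ having a zero or a pole at each such point according to the local behaviour, interlacing along $\R$). The complement $\C \setminus \R$ consists of two half-planes, each mapped by $F$ into $\bC$ with no critical points (local univalence) and no zeros or poles in the interior; I would study the ``singular set'' of $F^{-1}$, i.e.\ the asymptotic values, and show that finiteness of the order of $E$ forces only finitely many singularities over $\C^*$.

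The key step is a Teichm\"uller--Wittich--Belinski\u\i\ / Denjoy--Carleman--Ahlfors type argument: because $\rho(E) < \infty$, the function $F$ restricted to either half-plane has finite order in an appropriate sense, so the number of direct (in particular logarithmic) singularities of $F^{-1}$ is finite. Concretely, I would pass to the exponential coordinate, or use the known fact (Bank--Laine theory, and the fact that $1/E$ has finite order) that the number of separate ``logarithmic tracts'' of $F$ over any value is bounded in terms of $\rho(E)$. The zeros and poles of $E$ being real means all the ``interesting'' boundary behaviour of $F$ on each half-plane is concentrated on $\R$: near $\infty$ inside a half-plane, $F$ omits $0$ and $\infty$ there except along $\R$, which is exactly the mechanism producing logarithmic singularities over $0$ and over $\infty$; since $A$ is non-constant, $F$ is genuinely transcendental and there must be infinitely many such tracts (otherwise $F$, hence $A$, would be of a degenerate elementary form, excluded because $A$ non-constant transcendental; for $A$ non-constant one shows by the order-$1$ lower bound and the interlacing structure that the real zero set is infinite and genuinely produces infinitely many boundary passages through $0,\infty$). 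Thus $m$, the number of singularities over $\C^*$, is finite.

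For the inequalities in $(a)$ and $(b)$ I would set up a Lindel\"of/harmonic-measure counting argument relating $m$ to the growth exponent $\lambda(E)$. Partition a large circle $|z|=r$ according to which tract of $F$ (over $0$, over $\infty$, or near one of the finitely many $\C^*$-singularities) the point lies in; the zeros of $E$ on $\R$ are sandwiched between an ``over $0$'' tract and an ``over $\infty$'' tract, so the number $n(r)$ of zeros of $E$ in $|z| \le r$ is controlled by the number of alternations between these tracts along $|z|=r$, which in turn is at least the number of $\C^*$-singularities separating them. A standard estimate of the type ``a region in which a function of finite order omits a value and whose opening angle near $\infty$ is $\theta$ contributes growth at least $\pi/\theta$'' — i.e.\ the Wiman--Valiron / $\cos\pi\rho$ heuristic applied to the $m$ sectors into which the singularities cut a neighbourhood of $\infty$ — yields $\lambda(E) \ge m - 1/2$ in the one-sided case and $\lambda(E) \ge m-1$ in the two-sided case; the two-sided case additionally forces $m \ge 4$ because symmetry (real $E$) pairs up the $\C^*$-singularities and a two-sided real zero sequence requires tracts accumulating at $+\infty$ and at $-\infty$ separately. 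The one-sided bound $m \ge 2$ is immediate from there being at least one zero of $E$, hence at least one ``over $0$'' and one ``over $\infty$'' tract with a $\C^*$-singularity between them and another between them going around $\infty$.

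The main obstacle I expect is the geometric bookkeeping that converts ``$m$ singularities over $\C^*$'' into the precise constants $m - 1/2$ and $m-1$: one must show that the finitely many singular tracts really do cut the neighbourhood of $\infty$ into $m$ (or $2m$, after symmetrization) sectors of total angle $2\pi$, control the possibility that several tracts share asymptotic directions, and then apply an Ahlfors-type distortion theorem sharply enough to get the $-1/2$ versus $-1$ discrepancy — which reflects whether the real zeros accumulate only at one end of $\R$ (one extra ``half sector'' at the finite end) or at both ends. Making the covering-space structure of the logarithmic tracts over $0$ and $\infty$ precise, and ruling out degenerate configurations, is where the real work lies; everything else is assembling \eqref{defE}, \eqref{EF}, the finite-order hypothesis, and classical value-distribution estimates.
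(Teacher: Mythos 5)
A preliminary remark: the paper does not prove Theorem~\ref{thmB} at all --- it is quoted from Langley's papers and then strengthened; so the fair comparison is with the ingredients the paper (and Langley) actually use for the corresponding statements, namely Lemma~\ref{lem:ESingularities}, Proposition~\ref{prop:FGsingularities}, Proposition~\ref{prop:lemma5} and the arguments of sections~\ref{analytic-proof} and~\ref{section7}--\ref{section9}. Measured against that, your proposal has two genuine gaps. First, your route to the finiteness of $m$ rests on the assertion that ``$F$ restricted to either half-plane has finite order in an appropriate sense''. This is unjustified and in general false: when $A$ is transcendental the solutions $w_1,w_2$, hence $F=w_2/w_1$, have infinite order; only $E=F/F'$ is assumed of finite order. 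The substantive step is to transfer the singularities of $F^{-1}$ over $\C^*$ to direct singularities of $E^{-1}$ (or of $(E/z)^{-1}$) over $\infty$, so that the Denjoy--Carleman--Ahlfors theorem can be applied to the finite-order function $E$ rather than to $F$. That transfer is exactly the content of Langley's Proposition~2.1 and of the paper's Lemma~\ref{lem:ESingularities} and Proposition~\ref{prop:FGsingularities} (Huber's theorem inside a direct tract of $E$ to produce a path with $\int |F'/F|\,|\dif z|<\infty$, and a Koebe-type estimate, Lemma~\ref{lem:Koebe}, for the converse direction); you invoke it as a ``known fact \dots bounded in terms of $\rho(E)$'', which amounts to assuming the point at issue.

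Second, the inequalities in $(a)$ and $(b)$ concern $\lambda(E)$, the exponent of convergence of the \emph{zeros}, with the sharp constants $m-1/2$ and $m-1$. The ``standard estimate'' you cite ($m$ tracts of total angle $2\pi$, growth at least $\pi/\theta$ in a tract of angle $\theta$) bounds only the growth of $E$, and only by roughly $m/2$; and your proposed zero-counting mechanism --- that $n(r)$ is controlled by the number of alternations between tracts along $|z|=r$ --- cannot work, because that number is bounded by a constant depending on $m$, independent of $r$, whereas one must show $n(r)\gtrsim r^{m-1/2}$. To reach the stated bounds one needs in addition (i) that $E$ (equivalently $E/z$) is bounded on the ray(s) carrying the zeros, which is Proposition~\ref{prop:lemma5} / Langley's Proposition~2.1(C) and is what forces the $\infty$-tracts to avoid those rays; (ii) a lower bound for $|E|$ between consecutive zeros (in the paper, Lemmas~\ref{lemma6} and~\ref{lem:C2}) or some other device converting growth into zero density; and (iii) a sharper Phragm\'en--Lindel\"of accounting in the slit plane or half-planes that produces $m-1/2$ rather than $m/2$ --- you yourself flag this conversion as ``where the real work lies'', and it is precisely what is missing. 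The remaining assertions you call immediate also need arguments: infinitely many singularities over $0$ and $\infty$ for non-constant $A$ uses Nevanlinna's theorem together with the Hellerstein--Shen--Williamson classification (cf.\ Remark~\ref{remark2}); and $m\geq 2$, respectively $m\geq 4$, requires showing that the asymptotic values adjacent to the ray of zeros are distinct and non-real (the combinatorial cell-decomposition argument of Lemma~\ref{lemma1-a}) and, in the two-sided case, ruling out that a single conjugate pair of tracts serves both ends (the $\tan$-like, $A$ constant, configuration). So the proposal is a reasonable road map whose key steps are exactly the unproved ones.
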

We will see that we actually have equality in these estimates of $\lambda(E)$.

To state Theorem~\ref{theorem1},  we also
recall that an entire function $f$ of order $\rho$ has completely
regular growth in the sense of Levin and Pfluger if there exists
a $2\pi$-periodic function $h_f\colon\R\to\R$, not vanishing identically, such  that
\begin{equation}\label{i2}
\log|f(re^{i\theta})|=h_f(\theta)r^\rho +o(r^\rho)
\end{equation}
as $r\to\infty$, for $re^{i\theta}$ outside a union of disks $\{z\colon |z-a_j|<r_j\}$ such that
\begin{equation}\label{i3}
\sum_{|a_j|\leq r} r_j=o(r)
\end{equation}
as $r\to\infty$. The function $h_f$ is called the \emph{indicator} of~$f$.

Our main result is the following theorem.
\begin{theorem}\label{theorem1}
Let $F\colon\C\to\bC$ be a symmetric local homeo\-morphism
with all zeros and poles real.
Suppose that  the number $m$ of singularities of $F^{-1}$ over points in $\C^*$ is
finite, but that $F^{-1}$ has infinitely many singularities over $0$ or $\infty$.

Then there exists a symmetric homeo\-morphism $\phi\colon \C\to\C$
such that $F_0=F\circ\phi$ is a meromorphic function, so that
$E=F_0/F_0'$ is entire and has the following properties:
\begin{enumerate}
\item[$(i)$] 
If $F$ has only finitely many zeros and poles, then
$m\geq 1$ and $\rho(E)=m$.
\item[$(ii)$] 
If the zeros and poles of $F$ form an infinite one-sided sequence,
then $m\geq 2$ and  $\lambda(E)=\rho(E)=m-1/2$.
\item[$(iii)$] 
If the zeros and poles of $F$ form an infinite two-sided sequence,
then $m$ is even, $m\geq 4$ and $\lambda(E)=\rho(E)=m-1.$
\item[$(iv)$] 
The functions $E$ and $A$ in~\eqref{AE} have the same order $\rho=\rho(E)$, and
they are of completely regular growth in the sense of Levin--Pfluger.

For $|\theta|\leq\pi$, the indicator of $E$ is given in case $(i)$ by
$h_E(\theta)=c\cos \rho\theta$ with $c\in\R\setminus\{0\}$ while in case $(ii)$
we have $h_E(\theta)=c\sin(\rho|\theta|)$ with $c>0$ if the zeros are positive
and $h_E(\theta)=c\sin(\rho|\theta-\pi|)$ with $c>0$ if the zeros are negative.
In case $(iii)$, $h_E$ is given by the (now coinciding) formulae of case $(ii)$.
In all cases we have
\begin{equation}\label{h_A}
h_A=2 \max\{-h_E,0\}.
\end{equation}
with some $c>0$.
\item[$(v)$] 
All values of $m$ indicated in $(i)-(iii)$ can actually occur.
\end{enumerate}
\end{theorem}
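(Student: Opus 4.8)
The proof I have in mind is geometric. Since we may replace $F$ by $F\circ\phi$, only the combinatorial type of $F$ --- its line complex, i.e.\ the ``surface spread over the sphere'' it defines --- is relevant. The plan is thus: (1) extract this combinatorial data from $F$; (2) realise it by an explicit quasiregular \emph{model} of prescribed growth; (3) straighten the model to a meromorphic function $F_0$ by quasiconformal surgery; and (4) read off the growth of $E=F_0/F_0'$ and of $A$.

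\emph{Combinatorial data.} Because $F$ is a local homeomorphism, every transcendental singularity of $F^{-1}$ is logarithmic: shrinking a topological disk $D$ around $0$ (or $\infty$) so that it omits the finitely many singular values in $\C^*$, the map $F$ is proper over $D\setminus\{0\}$ (there are no asymptotic values there), so it restricts on the corresponding tract to an unbranched covering of a punctured disk, which --- being non-trivial since $\bigcap_D U(D)=\emptyset$ --- must be the universal covering. Next I would fix a symmetric finite tree $T\subset\bC$ whose vertex set contains $0$, $\infty$ and all singular values of $F^{-1}$ in $\C^*$, and consider the symmetric, locally finite graph $G=F^{-1}(T)$, which contains $\R$ over the arc of $T$ joining $0$ to $\infty$ (recall $F(\R)\subset\R\cup\{\infty\}$ by symmetry). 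The faces of $G$ are the tracts: all but finitely many are unbounded logarithmic tracts over $0$ and $\infty$, while the bounded ones carry the real zeros and poles of $F$. From the symmetry of $G$, the way $0$ and $\infty$ occur along $\R$ in each of the cases $(i)$--$(iii)$, and a covering-space/Euler-characteristic count, I would deduce $m\ge1$ in $(i)$, $m\ge2$ in $(ii)$, and --- since in $(iii)$ the preimages of $0$ and $\infty$ accumulate along $\R$ in both directions, so that no unbounded tract, hence no singular value in $\C^*$, can be self-conjugate --- that $m$ is even and $m\ge4$ in $(iii)$. The same analysis fixes how $\R$ splits the plane into the $m$ angular blocks around which the tracts are arranged, and hence the order $\rho\in\{m,\,m-\tfrac12,\,m-1\}$ in the three cases.

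\emph{Model and surgery.} Using this data I would build a symmetric quasiregular local homeomorphism $\Psi\colon\C\to\bC$ with the same line complex: on each unbounded angular block, $\Psi$ is a rotated and rescaled copy of $z\mapsto\exp(\pm cz^{\rho})$, chosen so that exactly the required sectorial logarithmic tracts over $0$ and $\infty$, of opening $\pi/\rho$, appear; over a fixed compact set $\Psi$ is a quasiregular model realising the $m$ singularities in $\C^*$ together with the prescribed pattern of real zeros and poles; the two parts are matched by symmetric quasiconformal interpolation on a compact collar. Then $\Psi$ is holomorphic outside a compact set, so its complex dilatation has compact support, and Sto\"ilow's factorisation (the measurable Riemann mapping theorem, or a conformal gluing) provides a symmetric quasiconformal $h\colon\C\to\C$, conformal near $\infty$, with $F_0:=\Psi\circ h$ meromorphic. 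Since $\Psi$ and $F$ have no critical points, neither does $F_0$, so $F_0'$ never vanishes, $F_0$ has only simple poles, and $E=F_0/F_0'$ is entire with zeros exactly at the (real) zeros and poles of $F_0$. Finally $\Psi$ and $F$, being topologically holomorphic with the same line complex, satisfy $\Psi=F\circ g$ for a symmetric homeomorphism $g$; setting $\phi=g\circ h$ gives $F_0=F\circ\phi$. (Because $g$ is automatically a conformal isomorphism between the complex structures that $\Psi$ and $F$ pull back from $\bC$, and $\Psi$ is quasiregular, the structure pulled back by $F$ is parabolic, which is why no ``type problem'' intervenes.)

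\emph{Growth, and realisations.} As $h$ is conformal near $\infty$ and $\Psi$ is holomorphic there, $F_0$ has the same growth as $\Psi$ up to an affine change of variable. On a tract over $\infty$ one has $\log F_0=cz^{\rho}(1+o(1))$ on a sector of opening $\pi/\rho$; $\log|F_0|$ is harmonic and bounded on the edges of $G$ (which $F_0$ maps into the compact set $T$), so a Phragm\'en--Lindel\"of and harmonic-measure argument yields \eqref{i2}--\eqref{i3} for $F_0$ with the asserted indicator, the exceptional disks being small neighbourhoods of the real zeros and poles. Since $\log|F_0'/F_0|=O(\log r)$ off these disks, $E$ has completely regular growth, $\lambda(E)=\rho(E)=\rho$, and $h_E$ is as claimed; and in $4A=-2E''/E+(E'/E)^2-1/E^2$ the term $1/E^2$ dominates exactly where $h_E<0$ (which occurs in all three cases), the remaining terms being $o(r^{\rho})$ in log-modulus, whence $\rho(A)=\rho$ and $h_A=2\max\{-h_E,0\}$. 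Part $(v)$ follows either by running the surgery on the appropriate combinatorial data, or from explicit functions: $E=\exp(cz^{m})$ realises case $(i)$ for each $m\ge1$, and symmetric versions of Langley's functions, together with their quasiconformal deformations, realise cases $(ii)$ and $(iii)$. The main obstacle is the combinatorial rigidity of the first step combined with the calibration of the model --- showing that symmetry and the reality of the zeros and poles force exactly one of $(i)$--$(iii)$, the bounds on $m$ (especially the parity in $(iii)$), and the orders $m$, $m-\tfrac12$, $m-1$ --- and then pushing the $\exp(\pm cz^{\rho})$-pieces through the surgery with enough uniform control to extract not merely $\rho(E)$ but the precise indicator and the error estimates \eqref{i2}--\eqref{i3} across infinitely many tracts at once.
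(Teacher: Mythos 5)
Your overall strategy (extract the combinatorics, build a quasiregular model, straighten it, read off the growth of $E$ and $A$) is the same as the paper's, but the central surgery step as you describe it cannot work, and this is where the real content lies. You propose a model $\Psi$ that is ``a rotated and rescaled copy of $z\mapsto\exp(\pm cz^{\rho})$'' on each unbounded angular block, with the $m$ singularities over $\C^*$ and ``the prescribed pattern of real zeros and poles'' realised over a fixed compact set, so that the dilatation has compact support and plain measurable-Riemann-mapping straightening suffices. In cases $(ii)$ and $(iii)$ the zeros and poles form an infinite sequence accumulating only at $\infty$, so they cannot be confined to a compact set; and in all cases the $m$ logarithmic tracts over points of $\C^*$ are unbounded, with the finite asymptotic values approached along curves tending to $\infty$, whereas $\exp(\pm cz^{\rho})$ on a sector has asymptotic values only $0$ and $\infty$. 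Hence the corrections that match the tangent-like pieces (carrying the real zeros and poles) and the zero-free pieces (carrying the tracts over $a_j\in\C^*$) necessarily live on \emph{unbounded} sets, and the dilatation of the straightening map is not compactly supported. The paper's route is to prove that each zero-free piece is, up to compact sets, equivalent to an explicit model of the form $\exp\bigl(\int R_0(t)e^{-t^2}\,\dif t\bigr)$ (Propositions~\ref{lemma-broom1} and~\ref{lemma-broom3}, resting on Lemma~\ref{lemma-QeP}, Huber's and Lindel\"of's theorems), to verify that the supports of all dilatations have finite \emph{logarithmic area}, and then to invoke the Teich\-m\"uller--Wit\-tich--Belinskii theorem (Lemma~\ref{lemma-twb}) to get $\phi(z)\sim cz$; it is exactly this asymptotic conformality at $\infty$ that converts the model asymptotics~\eqref{12c}--\eqref{12d} into the precise order, the completely regular growth and the indicator, including~\eqref{h_A}. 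Your replacement of all this by ``same line complex $\Rightarrow$ equivalent'' plus a Phragm\'en--Lindel\"of/harmonic-measure argument presupposes a global model with the same (infinite) line complex \emph{and} controlled growth, which is precisely what has not been constructed; in particular the estimate $\log|F_0'/F_0|=O(\log r)$ off small disks is not available pointwise in the generality you need.

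Two further gaps. First, the combinatorial lower bounds are asserted but not proved: the inequality $m\geq 2$ in case $(ii)$, and the non-reality of the asymptotic values adjacent to the positive axis that underlies it, require a genuine argument (the paper's digon-chain argument in Lemma~\ref{lemma1-a} and the classification of the pieces in Theorem~\ref{theorem2}); an ``Euler-characteristic count'' does not by itself exclude a self-conjugate tract over a real asymptotic value, and $m\geq 4$ in case $(iii)$ additionally uses the hypothesis that there are infinitely many singularities over $0$ or $\infty$ (for $m=2$ both pieces would be of type $\Tan$ and that hypothesis would fail). Second, part $(v)$: $E=\exp(cz^m)$ does give case $(i)$, but ``symmetric versions of Langley's functions and their quasiconformal deformations'' only yield $m=2$ in case $(ii)$ and $m=4$ in case $(iii)$, since quasiconformal deformation does not change the number of singularities over $\C^*$; for arbitrary $m$ one must actually exhibit the combinatorial data and realise it, as the paper does by gluing a half-plane tangent element to explicit zero-free elements (one-sided case) and by the tree constructions for the two-sided case.
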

\begin{remark}\label{remark2}
All assumptions of Theorem~\ref{theorem1}
are of purely topological nature. So Theorem~\ref{theorem1} contains
a parabolic type criterion for a class of surfaces spread over the
sphere. It can be compared with the theorem of Nevanlinna~\cite{Nevanlinna1932} 
describing the conformal type and asymptotic behavior of
a locally univalent function $F$ whose inverse has only finitely many singularities.
In particular, Nevanlinna showed that a meromorphic function $F$ with this property
has finite order.

Suppose that $F^{-1}$ has only 
finitely many singularities over $0$ and $\infty$, but that
$F$ otherwise satisfies the hypotheses of Theorem~\ref{theorem1}. Then 
$F^{-1}$ has only finitely
many singularities, so belongs to the class considered by Nevanlinna. 
A result of Hellerstein, Shen and Williamson \cite[Theorem~2]{HSW} says
that if all zeros and poles of a real meromorphic function $F$ of this class are real,
then $F$ is a linear-fractional transformation or of the form
$F(z)=A\tan(az+b)+B$ with real constants $a,b,A,B$.

The reality of zeros and poles of $F$ is an essential assumption
here and in Theorem~\ref{theorem1}:
The results of \cite{BE2} show in particular that there exist
locally univalent meromorphic functions $F$ whose inverses have
only one singularity over $\C^*$, where the order of $E=F/F'$ can take
any preassigned value in $(1,\infty]$. 
\end{remark}

\begin{remark}\label{remark0}
Theorem~\ref{theorem1} is stronger than Theorem~\ref{thm:corollary1} for several reasons.
First, Theorem~\ref{theorem1} does not require the a priori assumption that $E$ has finite order,
but only the assumption that $F^{-1}$ has finitely many singularities over points in
$\C^*$.
Second, we obtain a more precise
description of the asymptotics of the functions $E$ and~$A$,
namely that these functions are of completely regular growth.
In particular, the functions are of normal type of the given order,
a conclusion that does not follow from our analytic proof of
Theorem~\ref{thm:corollary1}; see Remark~\ref{remark33} below.
Our proof of Theorem~\ref{theorem1} will in fact give additional insights in the 
structure of these functions.
Finally, the geometric approach also allows us to construct examples showing
that all indicated values of $m$ may actually occur.
Note that Langley's
Theorem A gives such examples for $m=2$ in case $(ii)$ and for
$m=4$ in case $(iii)$. Some of the underlying ideas of the construction of our examples
for general $m$ are similar to his, but the details are quite different.
It is plausible that
Langley's methods could also be modified to yield examples for arbitrary~$m$.
\end{remark}

\begin{remark}\label{remark1}
The case of arbitrary (not necessarily real) BL functions $E$ of finite order with 
all zeros real can be reduced to the case of real BL functions by the
following remark of Langley~\cite[p.~228]{L1}: Write $E=\Pi e^{P+iQ}$,
where $\Pi$ is a canonical product with real zeros, and $P$ and $Q$ are real polynomials.
Then condition~\eqref{defE} implies that at every zero $z$ of $E$ we have
$\Pi'(z)e^{P(z)+iQ(z)}=\pm 1$. Since $\Pi'(z)$ and $P(z)$
are real, we conclude that $Q(z)\in \pi\Z$ for every zero $z$ of~$E$.
So $\Pi e^P$ is a real BL function with all zeros real.  Furthermore, if $E$ is real, then
$A$ is also real by~\eqref{AE}, and $F$ in~\eqref{EF} can be chosen real.
Thus it suffices to consider only real functions $F$, $E$ and~$A$.
\end{remark}

\begin{remark}\label{remark3}
The {\em Speiser class}~$S$ is defined as the set of all meromorphic functions
$F\colon\C\to\bC$ for which there exists a finite subset $A$ of $\bC$ such that
$F\colon \C\setminus F^{-1}(A)\to \bC\setminus A$ is an (unramified) covering.
It plays an important role in value distribution theory~\cite{GO}
and holomorphic dynamics~\cite{B,Eremenko1992,6}.

Langley's Theorem~\ref{thmB} says in particular that 
if $E$ is a BL function of finite order, then
the associated locally univalent function $F$ is in~$S$.

Theorem~\ref{theorem1} gives a description of real locally univalent functions $F$
of class $S$ with only real zeros and poles, for which
the inverses have finitely many logarithmic singularities
over values in $\C^*$, and infinitely many logarithmic
singularities over each $0$ and~$\infty$.
Since class $S$ is much studied, this is of independent interest.
\end{remark}

\begin{remark}\label{remark4}
Our proof of Theorem~\ref{theorem1} uses topological arguments, quasiconformal sur\-gery 
and the Teich\-m\"uller--Wit\-tich--Belinskii theorem.
These methods are frequently used to construct {\em examples} of meromorphic functions.
In this paper, we also use this technique to prove a positive result.
\end{remark}

We have discussed the equation~\eqref{1} under the hypothesis that there are
two solutions with only real zeros.
Our final result addresses the case that there are three solutions with this property.
\begin{theorem}\label{theorem4}
Let $A$ be an entire function and suppose that~\eqref{1}
has three  pairwise linearly independent solutions which have only real zeros.
Then $A$ is constant.
\end{theorem}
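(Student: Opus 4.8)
The plan is to assume that $A$ is non-constant and derive a contradiction. If $A$ were a polynomial then, since \eqref{1} has in particular two linearly independent solutions with only real zeros, $A$ would be constant by \cite[Theorem~3]{HSW}; hence $A$ is transcendental, and then every solution of \eqref{1} has infinite order.

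Let $w_1,w_2,w_3$ be the given solutions. After rescaling $w_2$ we may assume that $(w_1,w_2)$ is normalized; writing $w_3=aw_1+bw_2$, pairwise linear independence forces $a\neq 0$ and $b\neq 0$, and after rescaling $w_3$ we may take $b=1$. Then $(w_1,w_2)$, $(w_1,w_3)$ and $(w_2,-w_3/a)$ are normalized pairs of solutions of \eqref{1}, so
\[
E:=w_1w_2,\qquad E_1:=w_1w_3,\qquad E_2:=-\frac{1}{a}\,w_2w_3
\]
are Bank--Laine functions, each satisfying \eqref{AE} with the same coefficient $A$, and each having only real zeros (those of the $w_j$). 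Since $E\,E_1=-a\,w_1^2\,E_2$, the function $w_1^2=-a^{-1}E\,E_1/E_2$ is entire, and $T(r,w_1^2)\le T(r,E)+T(r,E_1)+T(r,E_2)+O(1)$. If now $\rho(A)<\infty$, then Theorem~\ref{theorem0}, applied to each of $E,E_1,E_2$ (all of which have only real zeros), gives $\rho(E),\rho(E_1),\rho(E_2)<\infty$, so $w_1^2$, and hence $w_1$, has finite order; this contradicts the fact that $A$ is transcendental. Therefore $\rho(A)=\infty$.

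It remains to exclude $\rho(A)=\infty$, and this is where the third solution is used decisively. Put $F=w_2/w_1$. After a M\"obius transformation of the dependent variable --- which replaces $F$ by another ratio of solutions of \eqref{1}, so that $2A$ remains the Schwarzian of $F$ --- we may assume $F^{-1}(\{0,1,\infty\})\subset\R$. Since the poles of $F$ lie on $\R$ and $F$ omits $0$ and $1$ on the upper half-plane $\mathbb{H}^+$, the restriction $F|_{\mathbb{H}^+}$ is a holomorphic map into the hyperbolic domain $\Omega:=\bC\setminus\{0,1,\infty\}$, and likewise on the lower half-plane; with only two solutions available one would merely know that $F$ is zero-free and holomorphic on $\mathbb{H}^+$, i.e.\ maps into the parabolic domain $\C^*$, and no such control would follow. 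From the Schwarz--Pick inequality $\lambda_\Omega(F(z))\,|F'(z)|\le\lambda_{\mathbb{H}^\pm}(z)$ one gets $|F'(z)|=O(1/|\im z|)$ off the real axis, while Schottky's theorem, applied along hyperbolic geodesics, shows that $\log^+|F|$, $\log^+|1/F|$ and $\log^+|1/(F-1)|$ grow at most polynomially in $|z|$ along the lines $\im z=\pm1$. These estimates should yield a finite bound for the order of the entire function $E=F/F'=w_1w_2$ by a Phragm\'en--Lindel\"of argument across the strip containing $\R$; equivalently, one expects to deduce that $F^{-1}$ has only finitely many singularities over $\C^*$, so that Theorem~\ref{theorem1} applies to (a symmetric model of) $F$ --- or, if $F^{-1}$ has only finitely many singularities at all, that $F$ is, by Nevanlinna's theory together with \cite[Theorem~2]{HSW}, a linear-fractional transformation or of the form $z\mapsto c_1\tan(c_2 z+c_3)+c_4$. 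In every case one obtains $\rho(A)<\infty$, contradicting the previous paragraph.

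The routine parts are the normalizations and the Wronskian bookkeeping of the second paragraph. The main obstacle is the last step: the Schwarz--Pick inequality controls $|F'|$, and hence $|w_1|=|F'|^{-1/2}$, from only one side off the real axis, so obtaining the complementary bound, and then transferring all estimates across the real line --- where, by Remark~\ref{remark1a}, the zero set of $E=w_1w_2$ may a priori be arbitrary --- is the heart of the matter. The tools I expect to use there are the Schwarzian equation $S(F)=2A$, to propagate bounds on $\log F'$ along short vertical segments issuing from the real axis, the normality of $F$ on $\mathbb{H}^\pm$, and the quasiconformal-surgery and Teichm\"uller--Wittich--Belinskii techniques already present in the paper.
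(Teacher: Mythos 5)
Your reduction in the second paragraph is correct (the three normalized products $E,E_1,E_2$ are Bank--Laine functions with real zeros and the same $A$, and Theorem~\ref{theorem0} plus $w_1^2=-a^{-1}EE_1/E_2$ rules out $A$ transcendental of finite order), but it only shows that a counterexample would have $\rho(A)=\infty$; the paper's proof needs no order dichotomy at all, so this step buys nothing by itself. The actual content of the theorem is the case you leave open, and there your argument has a genuine gap. The decisive step is only described in the subjunctive (``should yield'', ``one expects''), and you yourself identify it as ``the heart of the matter''. Concretely: (1) the Schwarz--Pick inequality $\lambda_\Omega(F(z))|F'(z)|\le\lambda_{\mathbb{H}^+}(z)$ does \emph{not} give $|F'(z)|=O(1/|\im z|)$, because the hyperbolic density $\lambda_\Omega$ of the thrice-punctured sphere is not bounded below (it decays near the punctures), so the claimed derivative bound, and hence the proposed Phragm\'en--Lindel\"of estimate for $E=F/F'$ across the real axis, is unsubstantiated; (2) even granting a finiteness statement for the singularities of $F^{-1}$ over $\C^*$, Theorem~\ref{theorem1} requires $F$ to be \emph{symmetric}, whereas $F=w_2/w_1$ need not be a real function (the solutions have real zeros but need not be real), and the phrase ``a symmetric model of $F$'' is not a construction; (3) no mechanism is offered for deducing that $F^{-1}$ has finitely many singularities over $\C^*$ from Schottky-type bounds along horizontal lines.

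The paper's proof is quite different and avoids all growth estimates. It normalizes so that the zeros, $1$-points and poles of $F=w_2/w_1$ are real and then invokes the rigidity result of Bergweiler--Eremenko (Lemma~\ref{lemma-ldv}): a meromorphic function whose preimages of three points lie on $\R$ either maps $\R$ into a circle or has the explicit form~\eqref{9a}, whose Schwarzian is constant. In the first case, $F$ omits $0,1,\infty$ in each half-plane, hence is normal there; by the Lehto--Virtanen theorem (Lemma~\ref{lemma-lv}) any asymptotic value reached along a path in $\mathbb{H}^+$ is an angular limit, so $F$ has at most two asymptotic values $a$ and $a^*$ (symmetric in the circle), no critical points, and therefore $F\colon\C\to\bC\setminus\{a,a^*\}$ is a covering; composing with $L(z)=(z-a)/(z-a^*)$ gives $L\circ F=\exp(cz+d)$, so the Schwarzian $2A$ is constant. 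If you want to salvage your outline, the missing ingredient is precisely a substitute for Lemma~\ref{lemma-ldv}; without it, the passage from ``real preimages of three values'' to control of $F$ (or of $\rho(E)$) is not established.
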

\begin{remark}\label{remark9}
Our starting point was~\cite[Theorem~3]{HSW} which says that if A is a
polynomial and if~\eqref{1} has a basis of solutions with only real zeros,
then A is  constant. An extension of this result
to linear differential equations of higher order has been given by
Br\"uggemann~\cite[Theorem~5]{Brueggemann1991}
and Steinmetz~\cite[Corollary~2]{Steinmetz1993}.
It would be of interest to which extent our results
generalize to equations of higher order.
\end{remark}

This paper is organized as follows.
Theorem~\ref{theorem0} is proved in section~\ref{proof-thm0}.
In section~\ref{analytic-proof}, we give a purely analytic proof of Theorem~\ref{thm:corollary1}.
The proof of Theorem~\ref{theorem1} given in the subsequent sections is independent of this.
In section~\ref{section3} we collect the necessary prerequisites
on the pasting-and-gluing techniques and line complexes, to make this paper
self-contained. A reader familiar with this technique may pass
to section~\ref{section6}, where we construct examples (Part $(v)$ of Theorem~\ref{theorem1})
and outline the proof of all other parts.
These parts are then proved in sections~\ref{section7}--\ref{section9}.
Theorem~\ref{theorem4} is proved in section~\ref{proof-thm4}.

\begin{ack}
We thank Jim Langley for helpful comments.
We are also grateful to the referee for a careful reading and valuable suggestions.
\end{ack}

\section{Proof of Theorem~\ref{theorem0}} \label{proof-thm0}
We use the standard notation of Nevanlinna theory as given in~\cite{GO} 
or~\cite{Nevanlinna1953}.
The following result is due to Miles~\cite{Miles1979}.
\begin{lemma}\label{lemma-miles}
Let $f$ be an entire function of infinite order and suppose that the zeros of
$f$ lie on finitely many rays emanating from the origin.
Then there exists a set $L\subset [1,\infty)$ of logarithmic density zero such that 
\begin{equation}\label{6a}
\lim_{\substack{r\to\infty\\ r\notin L}} \frac{N(r,1/f)}{T(r,f)}= 0.
\end{equation}
\end{lemma}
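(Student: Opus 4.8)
The plan is to show that a function whose zeros are confined to finitely many rays grows much faster than its zero‑counting function once its growth is large, and then to feed in the infinite‑order hypothesis to guarantee that the growth is large for most radii.

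First I would write $f=\Pi e^{g}$, where $\Pi$ is the canonical product formed from the zeros of $f$ and $g$ is entire. Since $N(r,1/f)=N(r,1/\Pi)+O(1)$, it is enough to estimate $N(r,1/\Pi)$ against $T(r,f)$, and it is natural to split according to whether $\Pi$ itself has finite or infinite order: if $\Pi$ has finite order one has only the crude bound $N(r,1/\Pi)=O\!\bigl(r^{\lambda(\Pi)+\eps}\bigr)$, which then has to be absorbed into $T(r,f)$; if $\Pi$ has infinite order one needs the structural estimate described next.

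The core of the argument is the one‑sidedness of the zero distribution. Let $\theta_{1},\dots,\theta_{k}$ be the arguments of the rays carrying the zeros and split $\C$ into the corresponding closed sectors; in the interior of each sector $\log|f|$ is harmonic. Two complementary facts force $\log M(r,f)$, and hence $T(r,f)$ (using $T(r,f)\le\log^{+}M(r,f)\le 3T(2r,f)$), to dominate $N(r,1/f)$. On the one hand, Jensen's formula gives $\frac{1}{2\pi}\int_{0}^{2\pi}\log|f(re^{i\theta})|\,d\theta=N(r,1/f)+O(1)$, so very negative values of $\log|f|$ on an arc must be balanced by correspondingly positive values on another. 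On the other hand, the Weierstrass convergence factors of $\Pi$, being attached to zeros all lying within $k$ sectors rather than spread over the plane, make $\log|\Pi(re^{i\theta})|$ large — of size comparable to $r\sum_{|a_{n}|<r}|a_{n}|^{-1}$ already from the genus‑one part, and more from higher genus — along directions with $\cos(\theta-\theta_{j})$ bounded away from $0$. I would make this precise by working in each zero‑free sector, using a Carleman‑type formula (or angular Nevanlinna theory with Tsuji characteristics, or simply the minimum principle together with harmonic measure) to bound $\log|f|$ from below in a sector in terms of its boundary behaviour on the two bounding rays; summing the sectorial estimates yields $N(r,1/\Pi)=o(T(r,\Pi))$ when $\Pi$ has infinite order.

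The main obstacle is that the infinite‑order hypothesis by itself only forces $T(r,f)$ to outgrow $N(r,1/f)$ along a sequence of radii, whereas the conclusion demands this for all $r$ outside a set of logarithmic density zero. Bridging that gap is the crux: I would exploit the additional structure coming from the zeros lying on finitely many rays — not merely the value of $\rho(f)$ — together with the convexity of $r\mapsto T(r,f)$ in $\log r$ and the standard Borel‑type bounds on the exceptional sets produced by the logarithmic‑derivative and Jensen estimates used above, and then assemble these by a density‑zero exhaustion argument to produce the single set $L$ of logarithmic density zero off which $N(r,1/f)/T(r,f)\to 0$.
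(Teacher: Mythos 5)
First, note that the paper does not prove this statement at all: it is quoted verbatim as a known theorem of Miles, with a citation to his 1979 paper on entire functions of infinite order with radially distributed zeros, so the only fair comparison is with Miles's original argument.

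Measured against that, your proposal has a genuine gap, and in fact you flag it yourself: the entire difficulty of the lemma is to convert the hypothesis $\rho(f)=\infty$ into the statement that $N(r,1/f)/T(r,f)\to 0$ for \emph{all} $r$ outside a single set of logarithmic density zero, and at precisely this point you write that ``bridging that gap is the crux'' and offer only a list of tools (convexity of $T$ in $\log r$, Borel-type exceptional sets, a ``density-zero exhaustion argument'') without an actual mechanism. Infinite order by itself only gives domination of $N$ by $T$ along some sequence $r_k\to\infty$, and none of the ingredients you name is known to upgrade this to logarithmic density zero; that upgrade is exactly the content of Miles's theorem. The intermediate steps are also not secure: the claim that the convergence factors of the canonical product make $\log\lvert\Pi(re^{i\theta})\rvert$ of size $r\sum_{\lvert a_n\rvert<r}\lvert a_n\rvert^{-1}$ away from the zero rays ignores that an infinite-order product generally has infinite genus and substantial cancellation among factors, and the proposed Carleman/harmonic-measure lower bounds in the zero-free sectors are themselves only valid off exceptional sets and are never quantified against $N(r,1/f)$. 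Miles's actual proof proceeds quite differently, via the Fourier-series (Fourier coefficients of $\log\lvert f\rvert$) method in the tradition of Miles--Shea, with a delicate analysis that produces the exceptional set of logarithmic density zero; this is a substantial piece of work and not something that follows from Jensen's formula plus sectorial minimum-principle estimates in the way sketched. As it stands, your text is a plan with the decisive step missing rather than a proof.
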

\begin{proof}[Proof of Theorem~\ref{theorem0}]
It follows from~\eqref{AE} and the definition of the proximity 
function $m(r,\cdot)$ that 
\begin{equation}\label{6b}
2 m\!\left(r,\frac{1}{E}\right)
= 
m\!\left(r,\frac{1}{E^2}\right)
\leq 
m(r,A)+ m\!\left(r,\frac{E''}{E}\right)
+ 2 m\!\left(r,\frac{E'}{E}\right) +\LO(1).
\end{equation}
Suppose that $E$ has infinite order.
Lemma~\ref{lemma-miles} and the first fundamental theorem 
yield that there exists a set $L$ of logarithmic density zero such that
\begin{equation}\label{6c}
m\!\left(r,\frac{1}{E}\right) \sim T(r,E)
\quad\text{as}\ r\to\infty,\ r\notin L.
\end{equation}
On the other hand, the lemma on the logarithmic derivative~\cite[Chapter~3, \S~1]{GO},
applied to both $E$ and $E'$, implies
that there exists a set $M$ of finite logarithmic measure such that 
\begin{equation}\label{6d}
 m\!\left(r,\frac{E''}{E}\right) + 2 m\!\left(r,\frac{E'}{E}\right)
=\LO(\log T(r,E))+\LO(\log r) 
\quad\text{as}\ r\to\infty,\ r\notin M.
\end{equation}
Combining the last three equations we conclude that
\begin{equation}\label{6e}
(2-o(1)) T(r,E) \leq T(r,A)
\quad\text{as}\ r\to\infty,\ r\notin L\cup M.
\end{equation}
Since $A$ has finite order by hypothesis, this contradicts the assumption that $E$ 
has infinite order.
\end{proof}

\section{Analytic proof of Theorem~\ref{thm:corollary1}} \label{analytic-proof}
Throughout this section, we consider a Bank-Laine function $E$ and the functions
$A$ and $F$ given
by~\eqref{AE} and~\eqref{EF}. To prove Theorem~\ref{thm:corollary1}, 
 we wish to establish upper and lower bounds on the behavior of $E$ on the
 real axis. We begin by proving an upper bound.
 
\begin{proposition}\label{prop:lemma5}
Let $E$ be a real Bank-Laine function of finite order.
If $E$ has infinitely many positive zeros, then
\begin{equation}\label{2f}
\limsup_{x\to+\infty} \frac{\lvert E(x)\rvert}{x}<\infty.
\end{equation}
If $E$ has infinitely many negative zeros, then 
$\limsup_{x\to-\infty} \lvert E(x)/x\rvert<\infty$.
\end{proposition}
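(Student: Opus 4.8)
The plan is to reduce \eqref{2f} to a uniform estimate on each interval between consecutive positive zeros of $E$, and there to exploit the geometry of the locally univalent function $F$ from \eqref{EF}. \emph{Reductions.} By Remark~\ref{remark1} we may take $F$ to be real. Since $E$ satisfies \eqref{defE}, every zero of $E$ is simple with $E'=\pm1$ there; by hypothesis $E$ has infinitely many positive zeros, which we enumerate as $x_1<x_2<\dots\to+\infty$. As $1/E=F'/F$ by \eqref{EF}, and zeros of $E$ correspond to zeros and poles of $F$, inspecting the residue of $F'/F$ at $x_n$ shows that $x_n$ is a \emph{simple} zero of $F$ (and then $E'(x_n)=1$) or a \emph{simple} pole of $F$ (and then $E'(x_n)=-1$). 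Applying the first assertion to $z\mapsto E(-z)$ yields the second, so it suffices to establish \eqref{2f}.

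\emph{Structure on $I_n=(x_n,x_{n+1})$.} On $I_n$ the function $F$ has no zeros or poles (these would be zeros of $E$), so $F$ is real-analytic there with $F'\neq0$, hence strictly monotone; since each of $x_n,x_{n+1}$ is a zero or a pole of $F$, monotonicity forces exactly one of them to be a zero and the other a pole. Replacing $F$ by one of $\pm F,\pm1/F$, which changes $E$ only by sign, we may assume that $F\colon I_n\to(0,\infty)$ is an increasing bijection; then $F\to0$ at $x_n$, $F\to\infty$ at $x_{n+1}$, while $E=F/F'>0$ on $I_n$ and $E(x_n)=E(x_{n+1})=0$. Writing $w_1$ for the real solution of $w''+Aw=0$ with $w_1^2=1/F'$ — so $w_1>0$ on $I_n$, $w_1(x_n)\neq0$, $w_1$ has a simple zero at $x_{n+1}$, and $F(y)=\int_{x_n}^{y}w_1(t)^{-2}\,dt$ — and noting $E=F\,w_1^2$, we obtain the representation
\[
E(y)=w_1(y)^2\int_{x_n}^{y}\frac{dt}{w_1(t)^2}=\int_{x_n}^{y}\Bigl(\frac{w_1(y)}{w_1(t)}\Bigr)^{2}dt,\qquad y\in I_n .
\]

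\emph{The estimate.} If $A\le0$ on $I_n$, then $w_1''=-Aw_1\ge0$ there, so $w_1$ is convex on $I_n$; being positive on $I_n$ and vanishing at $x_{n+1}$, it must be non-increasing, whence $w_1(t)\ge w_1(y)$ for $x_n\le t\le y$ and the representation gives $E(y)\le y-x_n\le y$. The remaining situation — $A$ positive somewhere on $I_n$, where $w_1$ may develop a large ``bump'' relative to its values near the ends of $I_n$ — is where the hypothesis $\rho(E)<\infty$ must be used, and I expect this to be the principal obstacle. One has to rule out that such bumps occur on infinitely many intervals $I_n\subset[x,\infty)$: the Wronskian normalization $w_1w_2'-w_1'w_2=1$ ties a large bump of $w_1$ to large values of $w_1',w_2'$ at the neighbouring zeros of $E$ (equivalently, to small values of $w_1,w_2$ there), and propagating this along the sequence of zeros would make $E=w_1w_2$ grow faster than any finite order; alternatively this last step may be run as a rescaling/normal-families argument applied to a hypothetical sequence $y_k\to+\infty$ with $|E(y_k)|/y_k\to\infty$. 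Granting it, one gets $|E(x)|=O(x)$ uniformly for large $x$, which is \eqref{2f}. This upper bound will subsequently be paired with a complementary lower bound for $|E|$ on the real axis and a Phragm\'en--Lindel\"of argument to deduce Theorem~\ref{thm:corollary1}.
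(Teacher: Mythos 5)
Your reduction to the intervals $I_n=(x_n,x_{n+1})$ and the representation $E(y)=w_1(y)^2\int_{x_n}^y w_1(t)^{-2}\,dt$ are fine, and the estimate $E(y)\le y-x_n$ in the case $A\le 0$ on $I_n$ is correct. But the proof stops exactly where the proposition becomes nontrivial: the case where $w_1$ has an interior ``bump'' on $I_n$ is not handled, and the two mechanisms you gesture at do not close it. The Wronskian relation gives, at a zero $x_{n+1}$ of $w_1$, only $w_1'(x_{n+1})w_2(x_{n+1})=-1$, which holds automatically for every Bank--Laine function and places no constraint on how large $w_1$ (hence $E$) can be in the interior of $I_n$; nothing in this identity ``propagates'' along the zero sequence. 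A rescaling/normal-families argument at hypothetical points $y_k$ with $|E(y_k)|/y_k\to\infty$ likewise has no obvious contradiction to reach, because the hypothesis $\rho(E)<\infty$ is a global growth condition that you never actually bring to bear on the local picture: by Shen's theorem (Remark~\ref{remark1a}) \emph{any} real sequence is the zero set of some Bank--Laine function, so for infinite order the conclusion genuinely fails, and any correct argument must convert finiteness of the order into quantitative information near the positive axis. Your sketch contains no step where that conversion happens, so the central claim is unproven.

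For comparison, the paper's proof makes this conversion through the inverse-function machinery: it sets $G(z)=E(z)/z$, shows (Proposition~\ref{prop:FGsingularities}, using Lemma~\ref{lem:ESingularities}, Huber's lemma and the Koebe-type estimates of Lemma~\ref{lem:Koebe}) that the singularities of $G^{-1}$ over $\infty$ correspond bijectively to the singularities of $F^{-1}$ over points of $\C^*$, with matching neighborhoods; finite order of $E$ enters precisely to guarantee that there are only finitely many such tracts. Then the reality hypothesis is used geometrically: a logarithmic tract of $F$ over a value in $\C^*$ that meets $\R$ is symmetric and simply connected, contains no zeros of $E$, hence its real trace is a bounded-above interval, so no sequence $x\to+\infty$ can enter such a tract, and $|G|$ stays bounded on the positive axis. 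If you want to salvage your interval-by-interval approach, you would need a substitute for this step, i.e.\ a genuine argument that finite order forbids bumps of unbounded relative size on infinitely many $I_n$; as written, that is assumed rather than proved.
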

\begin{remark}
Our second, geometric, proof of Theorem~\ref{thm:corollary1} yields the
stronger statement that $E(x)$ itself is bounded as $x\to+\infty$ resp.\ as $x\to-\infty$. 
Moreover, this holds not only for the function $E$, which is the product of two solutions of~\eqref{1}, but for any individual solution 
of having infinitely many zeros. See Remark~\ref{remark11} below.
\end{remark}

To prove Proposition~\ref{prop:lemma5}, we consider the function $G$ defined by 
\begin{equation}\label{2a}
G(z)=\frac{E(z)}{z},
\end{equation}
and relate the singularities of $G^{-1}$ over $\infty$ to the 
singularities of $F^{-1}$ over non-zero finite values. 
Langley~\cite[Proposition 2.1, (C)]{L2} proved that
under the hypotheses of Proposition~\ref{prop:lemma5}
every neighborhood of a transcendental singularity of $F^{-1}$ over a non-zero 
finite value contains a neighborhood of 
a singularity of $G$ over $\infty$. We strengthen this result as follows.
 
\begin{proposition}\label{prop:FGsingularities}
Let $E$ be a Bank-Laine function of finite order, $F$ a locally univalent function
satisfying $E=F/F'$ and $G(z)=E(z)/z$. 

Then there is a bijection between the singularities of $G^{-1}$ over $\infty$ and the 
singularities of $F^{-1}$ over values in $\C^*$, with the following property:
Any sequence of points converging to a singularity of $G^{-1}$ over $\infty$ also
converges to the corresponding singularity of $F^{-1}$. 
\end{proposition}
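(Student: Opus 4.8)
The plan is to exhibit the bijection as the inverse of a map that sends each singularity of $G^{-1}$ over $\infty$ to a singularity of $F^{-1}$ over $\C^*$. Note first that $1/G(z)=zF'(z)/F(z)$, so that a singularity of $G^{-1}$ over $\infty$ is an asymptotic tract along which $F'/F$ decays faster than $1/z$, while a singularity of $F^{-1}$ over $a\in\C^*$ is one along which $F\to a$. The ``sequence converging to both'' requirement is equivalent to saying that every tract of the $F$-singularity contains a tract of the corresponding $G$-singularity, and this is the relation I would track throughout.

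The main new point is the construction of the map $\mathfrak t\mapsto\mathfrak s(\mathfrak t)$. Let $\mathfrak t$ be a singularity of $G^{-1}$ over $\infty$, with tract $V=V(\{w:|w|>R\})$; on $V$ we have $|E(z)|>R|z|$, hence $E$, and with it $F$ and $F'$, is zero-free on $V$, and $0\notin V$ once $R$ is large. I would pass to a logarithmic model of the tract: after shrinking $R$ to make the covering $V\to\{w:|w|>R\}$ unbranched (avoiding critical values of $G$, or else passing to the universal cover), one obtains a conformal $\varphi\colon\{\zeta:\re\zeta>\log R\}\to V$ with $G\circ\varphi(\zeta)=e^{\zeta}$, whence $E\circ\varphi(\zeta)=\varphi(\zeta)e^{\zeta}$ and
\[
\frac{d}{d\zeta}\log F(\varphi(\zeta))=\frac{\varphi'(\zeta)}{E(\varphi(\zeta))}=e^{-\zeta}\,\frac{\varphi'(\zeta)}{\varphi(\zeta)}.
\]
Since $0\notin V$, the Koebe distortion theorem gives $|\varphi'(\zeta)/\varphi(\zeta)|\le 4/(\re\zeta-\log R)$, so the right-hand side is $O\!\bigl(e^{-\re\zeta}/\re\zeta\bigr)$; integrating from the end $\re\zeta=+\infty$ shows that $\log F(\varphi(\zeta))$ converges to a finite limit, so $F(\varphi(\zeta))$ tends to a finite non-zero value $a$, and $V$ therefore lies eventually in a tract $U$ of $F^{-1}$ over $a$. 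Any sequence converging to $\mathfrak t$ has $F\to a$ along it, hence is eventually in $U$ and converges to that $F$-singularity, which we call $\mathfrak s(\mathfrak t)$.

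It then remains to see that $\mathfrak t\mapsto\mathfrak s(\mathfrak t)$ is a bijection. Surjectivity is Langley's half \cite[Proposition~2.1(C)]{L2} — or a direct argument in the logarithmic coordinate $\psi=\log(F-a)$ of $U$, with inverse $\varphi_0$, in which $E\circ\varphi_0(\zeta)=(ae^{-\zeta}+1)\varphi_0'(\zeta)$ and $G\circ\varphi_0(\zeta)=(ae^{-\zeta}+1)\varphi_0'(\zeta)/\varphi_0(\zeta)$: every tract of $F^{-1}$ over $a\in\C^*$ contains a tract of $G$ over $\infty$ near its end $\re\zeta=-\infty$, and such a tract is sent back to the given $F$-singularity by the construction of the previous paragraph. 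For injectivity one compares the two logarithmic coordinates: feeding the estimate for $\log F(\varphi(\zeta))$ into $\psi$ should show that $\psi(V)$ is asymptotic, inside the half-plane $\{\re\zeta<\log r\}$ modelling $U$, to a full sub-half-plane $\{\re\zeta<-\log R+O(1)\}$, and two disjoint sets of that shape cannot coexist in a half-plane; hence a single $F$-tract contains only one $G$-tract over $\infty$. Assembling these facts gives the bijection together with the asserted convergence property.

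The step I expect to be the main obstacle is precisely this sharp comparison of the two logarithmic coordinates — showing that the $G$-tract fills out the deep end of the $F$-tract rather than merely sitting somewhere inside it — together with the (routine but technical) reduction to genuinely logarithmic tracts. This is where the finite order of $E$ enters: it controls the geometry of $V$ (through bounds on the conformal modulus, equivalently the angular width, of the tract), without which $G$ could a priori break up into several tracts over $\infty$ deep within one $F$-tract and destroy injectivity.
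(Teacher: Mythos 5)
There are two genuine gaps. First, your construction of the map from $G$-singularities to $F$-singularities presupposes a conformal logarithmic coordinate $\varphi$ with $G\circ\varphi=e^{\zeta}$ on a tract $V$ of $G$ over $\infty$; this requires that singularity to be logarithmic (so $V$ simply connected and free of critical points of $G$), which is not known at this stage. The critical points of $G$ are the solutions of $zE'(z)=E(z)$, and nothing prevents $|G|$ from being arbitrarily large at such points, so a direct tract of a finite-order function may contain critical points no matter how large you take $R$; enlarging $R$ therefore does not help, passing to a universal cover does not remove the branching of $G$ itself, and simple connectivity of $V$ is also not automatic. The paper extracts the asymptotic value of $F$ attached to such a tract by an argument that needs only directness: Huber's theorem applied to the subharmonic function $\log|E/K|$ (Lemma~\ref{lem:ESingularities}) produces a path along which $\int |F'/F|\,|\dif z|<\infty$, so $\log F$ converges; all logarithmic-coordinate work is then done on the $F$-side, where the singularity over $\alpha\in\C^*$ is logarithmic by \cite[Corollary~1.1]{langley2019sing}.

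Second, and more seriously, the injectivity step --- that a logarithmic tract of $F$ over $\alpha$ contains exactly one unbounded component of $\{z\colon|G(z)|>R\}$, i.e.\ that the $G$-tract fills the deep end of the $F$-tract --- is exactly what you defer (``should show'', ``the main obstacle''), and it is the heart of the proposition, so the proposal is not a proof. The paper carries this out: with $\phi\colon H\to T$ normalized so that $(\log F)(\exp(\phi(w)))=\beta+\eps e^{-w}$, one has $|G(\exp(\phi(w)))|=\eps^{-1}|\phi'(w)|e^{\re w}$, which is bounded above via Koebe when $\re w$ is bounded (this yields that $|G|$ is bounded on the boundary of the $F$-tract, so $G$-tracts cannot leave it) and bounded below via the half-plane distortion estimate of Lemma~\ref{lem:Koebe}, which forces any unbounded component of $\{w\colon\re w>1,\ |g(w)|>R\}$ to contain a full sector, whence uniqueness. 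Note also that your closing heuristic misplaces the role of finite order: uniqueness is a pure distortion/connectivity argument inside the logarithmic $F$-tract, and finiteness of the order is used only to ensure that $G^{-1}$ has finitely many singularities over $\infty$ (so they can be separated by one level set of $|G|$) and, through the cited result of Langley, that the $F$-singularities are logarithmic. Citing Langley's Proposition~2.1(C) for surjectivity is legitimate, but the remaining steps above are the substantive content and are missing.
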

In order to prove Proposition~\ref{prop:FGsingularities} we will use the following lemma.

\begin{lemma}\label{lem:ESingularities}
 Let $F$ be a meromorphic function and set 
   $E := F/F'$. Then every neighborhood of a
   direct transcendental singularity of $E^{-1}$ over $\infty$ contains an asymptotic path for
   some asymptotic value $a\in\C^*$ of $F$. 
\end{lemma}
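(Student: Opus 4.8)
The plan is to work directly with a tract of $E^{-1}$ over $\infty$ and to translate the direct singularity of $E$ into a statement about $F$ using the identity $E = F/F'$, equivalently $(\log F)' = 1/E$. Fix a direct transcendental singularity of $E^{-1}$ over $\infty$; by definition there is a disk $D = \{|w| > R\}$ (a neighbourhood of $\infty$) and a tract $U = U(D)$, a component of $E^{-1}(D)$, on which $|E| > R$. The directness means we may also arrange, after shrinking, that $E$ omits some value, so in particular $1/E$ is bounded on $U$, say $|1/E| \le 1/R$ there. First I would note that since $E$ is small on $U$, the derivative of $\log F$ is small; the idea is that $\log F$ is then nearly constant along paths into the tract, so $F$ itself has a finite nonzero asymptotic limit. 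One has to be slightly careful because $U$ need not be simply connected and $F$ may have zeros or poles in $U$; but $E = F/F'$ is holomorphic and nonzero on $U$ (it is large there), so $F$ has no critical points in $U$, and zeros/poles of $F$ are simple — locally $F$ looks like $z \mapsto z$ or $z \mapsto 1/z$ in suitable coordinates. These contribute $\pm$ unit residues to $1/E = (\log F)'$, which is a nuisance for integrating $\log F$ but not for the final conclusion.

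The cleanest route, which I would take, is to integrate $1/E$ along a carefully chosen path $\gamma$ tending to $\infty$ inside $U$ and avoiding the (discrete) zeros and poles of $F$. Choose a sequence of points $z_n \in U$ with $z_n \to \infty$ — these exist because $\bigcap_D U(D) = \emptyset$ forces $U$ to be unbounded — and join them by arcs in $U$; I want these arcs to have controlled length, which is where I would invoke a length–area estimate (the standard Gronwall/"Denjoy–Carleman–Ahlfors"-style argument): a tract over $\infty$ on which $E$ is large and $1/E$ is bounded cannot be too wide, so one can find a path $\gamma \subset U$ to $\infty$ along which $\int_\gamma |dz|/|z|$ converges, hence along which $\int_\gamma |1/E|\,|dz| < \infty$ since $|1/E|$ is bounded. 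Actually, a more robust variant: use that $1/E = (\log F)'$ and that on $U$ the function $\log F$ maps into a half-plane minus the effect of the logarithmic poles, then apply a normal-families / Lindelöf-type argument along $\gamma$. Either way, the upshot is that $\log F(z)$ converges to a finite limit as $z \to \infty$ along $\gamma$ (once $\gamma$ is steered around the zeros and poles of $F$, whose total angular contribution along a well-chosen curve is finite or can be cancelled), so $F(z) \to a$ for some $a \in \C$, and $a \ne 0, \infty$ because $\log F$ stays finite. Thus $a \in \C^*$ is an asymptotic value of $F$ with asymptotic path $\gamma \subset U$, and since $U$ was an arbitrary tract, every neighbourhood of the singularity contains such a $\gamma$.

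The main obstacle I anticipate is exactly the presence of zeros and poles of $F$ inside the tract $U$: these make $\log F$ multivalued and prevent a naive "integrate the small derivative" argument. I would handle this by either (a) passing to the simply connected "logarithmic" portion of the tract and showing the zeros/poles accumulate only at $\infty$ so that a path to $\infty$ can be drawn skirting all of them, and the net change in $\arg F$ contributed by encircling finitely many of them in each bounded piece is compensated, or (b) — cleaner — observing that $F$ restricted to $U$, being locally injective with only simple zeros and poles and with $F/F'$ omitting a neighbourhood of $\infty$, has the property that $\log F$ is a well-defined holomorphic function on the universal cover of $U$ with derivative lifting $1/E$, and then running the length–area argument upstairs. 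A secondary point to get right is that the resulting asymptotic value is genuinely in $\C^*$: boundedness of $1/E$ on $U$ gives a bound on $|(\log F)'|$, and combined with finite length of $\gamma$ this bounds the oscillation of $\log F$, pinning the limit away from $0$ and $\infty$. I expect the length–area estimate and the bookkeeping of zeros/poles of $F$ to be the only real work; the translation $E = F/F' \leftrightarrow (\log F)' = 1/E$ does all the conceptual lifting.
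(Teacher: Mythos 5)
Your overall strategy --- continue a branch of $\log F$ along a path to $\infty$ inside a tract where $|E|$ is large, using $(\log F)'=1/E$, and conclude that $\log F$ has a finite limit so that $F$ tends to some $a\in\C^*$ --- is exactly the paper's strategy. But the step that makes this work is not justified in your sketch, and as written it is false. You claim one can find a path $\gamma\subset U$ tending to $\infty$ with $\int_\gamma |\dif z|/|z|<\infty$; no such path exists, since for any curve going to $\infty$ one has $\int_\gamma |\dif z|/|z|\geq\int \dif|z|/|z|=\infty$. Moreover, even some finite weighted length would not help, because the only bound you use on $1/E$ is the constant bound $|1/E|\leq 1/R$ coming from the definition of the tract, and a constant bound can never make $\int_\gamma |\dif z|/|E|$ finite along an infinite-length path. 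What is genuinely needed is that $|E|$ \emph{grows rapidly along a suitably chosen path}, and this is where the hypothesis that the singularity is \emph{direct} does real work: one forms the subharmonic function $u=\log^+|E/K|$ supported on the direct tract, invokes the growth estimate $M(r,u)/\log r\to\infty$ for direct tracts (\cite[Theorem~2.1]{bergweilerripponstallard}), and then applies Huber's theorem (Lemma~\ref{lemma-huber}, with $\lambda=1$) to produce a path $\gamma$ with $\int_\gamma e^{-u}|\dif z|=\int_\gamma K|\dif z|/|E|<\infty$. Your appeal to ``a length--area estimate'' points in the right direction (Huber's proof is of that type), but the specific claim you extract from it is the incorrect one above, so the central analytic step of the proof is missing.

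A secondary point: the ``main obstacle'' you spend most effort on --- zeros and poles of $F$ inside the tract, multivaluedness of $\log F$, universal covers --- does not exist. At any zero or pole of $F$ (of any order), $E=F/F'$ vanishes, so such points cannot lie in a tract on which $|E|>R>0$; consequently $1/E=F'/F$ is holomorphic and pole-free there, and a branch of $\log F$ continues without obstruction along any curve in the tract, with the image of $\gamma$ having Euclidean length $\int_\gamma |\dif z|/|E|$. Once the Huber-type integrability is in place, the finite length immediately gives $\log F\to\beta\in\C$ along $\gamma$ and hence $F\to e^\beta\in\C^*$, which is the paper's conclusion; so the bookkeeping you describe is unnecessary, and the real content you still need to supply is the growth-plus-Huber argument.
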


To prove the lemma, we use the following result of Huber~\cite{Huber1957}; see also~\cite{Lewis1984}.
\begin{lemma}\label{lemma-huber}
Let $u\colon\C\to[-\infty,\infty)$ be subharmonic and let $\lambda>0$.
Suppose that
\[
\lim_{r\to\infty}\frac{\max_{|z|=r} u(z)}{\log r}=\infty.
\]
Then there exists a path $\gamma$ tending to $\infty$ such that 
\[
\int_{\gamma} e^{-\lambda u(z)}|\dif z|<\infty.
\]
\end{lemma}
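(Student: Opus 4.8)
The statement is the classical theorem of Huber that a subharmonic function growing faster than $\log r$ admits, for every $\lambda>0$, an asymptotic path along which $e^{-\lambda u}$ is integrable; for the full technical details one would ultimately invoke~\cite{Huber1957} (or the Harnack-based variant in~\cite{Lewis1984}), but the strategy is as follows. Regard $\rho:=e^{-\lambda u}$ as a lower semicontinuous, possibly singular, conformal density on $\C$. Fix a base point $z_0$ with $u(z_0)>-\infty$ and let $v(z)$ be the infimum of the $\rho$-lengths $\int_\gamma e^{-\lambda u(\zeta)}\,|\dif\zeta|$ over rectifiable paths $\gamma$ from $z_0$ to $z$. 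The first step is to reduce the lemma to the single inequality $\liminf_{z\to\infty}v(z)<\infty$. Indeed, if the latter holds, one takes points $z_n\to\infty$ together with near-minimal connecting paths $\gamma_n$ whose $\rho$-lengths are uniformly bounded; since $u$ is locally bounded above, $\rho$ is bounded below on compact subsets of $\C$, so these paths, parametrised by $\rho$-arc length, are locally equicontinuous in the Euclidean metric, and an Arzel\`{a}--Ascoli and diagonal argument extracts from them a single proper curve $\gamma$ tending to $\infty$ with $\int_\gamma e^{-\lambda u(z)}\,|\dif z|<\infty$. Thus it suffices to rule out $v(z)\to\infty$.

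Assume, for a contradiction, that $v(z)\to\infty$, so that the sets $\Omega_t:=\{z:v(z)<t\}$ form a bounded exhaustion of $\C$. The heart of the matter is a length--area estimate for this exhaustion, in the spirit of Huber. Write $B(t)$ for the Euclidean area of $\Omega_t$ and $\ell(t)$ for the $\rho$-length of the level set $\{v=t\}$. Using the eikonal relation $|\nabla v|=\rho$ (valid almost everywhere) together with the co-area formula one gets $B'(t)=\int_{\{v=t\}}\rho^{-1}\,\dif s$ for a.e.\ $t$, and the Cauchy--Schwarz inequality then yields
\[
\left(\text{Euclidean length of }\{v=t\}\right)^2\ \le\ \ell(t)\,B'(t).
\]
Now the hypothesis enters. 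By Hadamard's three-circles theorem $M(r,u)$ is a convex function of $\log r$, which by assumption grows faster than linearly; hence for every large $r$ there is a point on $|z|=r$ at which $u$ exceeds any prescribed multiple of $\log r$, and, by the maximum principle for subharmonic functions, the component of a super-level set $\{u>S\}$ through such a point cannot be bounded. Thus the density $\rho$ is extremely small on large unbounded sets reaching out to $\infty$. Feeding this into the displayed inequality forces $\ell(t)$ to be small relative to the Euclidean diameter of $\Omega_t$, which keeps $v$ bounded along a sequence of points tending to $\infty$ --- the desired contradiction. (Equivalently, one may run the estimate along near-radial curves through the near-maxima of $u$ on the circles $|z|=2^n$ and use the three-circles convexity to show directly that a suitable concatenation of such curves has finite $\rho$-length.)

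The step that genuinely requires care --- and the main obstacle --- is the interaction with the set where $u$ is very negative, i.e.\ where $\rho=e^{-\lambda u}$ blows up; this set carries the Riesz mass of $u$, there both $\iint\rho^2$ and $\int\rho\,\dif s$ can diverge, and the sublevel sets $\{u<-T\}$ can be geometrically complicated, so the co-area identities and the compactness argument above must be set up with precision. The observation that unlocks this is that the highly singular points of $\rho$ (those of sufficiently large Riesz mass) lie at infinite $\rho$-distance from $z_0$ and are therefore automatically excised from every $\Omega_t$, so that $\iint_{\Omega_t}\rho^2<\infty$ for each fixed $t$ and the level curves $\{v=t\}$ avoid a neighbourhood of these points; around the milder singularities one perturbs the exhausting curves slightly, at negligible cost in $\rho$-length. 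With this in place the length--area estimate closes and the lemma follows; we prefer, however, to cite~\cite{Huber1957,Lewis1984} for the execution of this final, delicate, step.
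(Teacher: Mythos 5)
The paper itself gives no proof of this lemma: it states it with a citation to~\cite{Huber1957} (see also~\cite{Lewis1984}) and moves on, which is precisely what you do, so your proposal matches the paper's treatment. Your heuristic sketch captures the length--area spirit of Huber's argument, but be aware that the parenthetical ``shortcut'' (joining near-maxima of $u$ on the circles $|z|=2^n$ by near-radial arcs) does not work as stated, since a subharmonic function can be arbitrarily negative between such points --- managing exactly this is what makes the cited proofs nontrivial, so your deferral to the references is essential rather than merely a convenience.
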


\begin{proof}[Proof of Lemma~\ref{lem:ESingularities}]
Let $W$ be a neighborhood of a direct singularity of $E$ over $\infty$; we
may 
choose $W$ as a component
of $\{z\colon E(z)|>K\}$ for some $K>0$. Assuming that $K$ is large, $W$ is a direct tract.
This implies that the function 
\[
u(z)=
\begin{cases}
\displaystyle\log\!\left|\frac{E(z)}{K}\right| & \text{if}\ z\in W, \\
0            & \text{if}\ z\notin W,
\end{cases}
\]
satisfies the hypothesis of Lemma~\ref{lemma-huber}; see~\cite[Theorem~2.1]{bergweilerripponstallard}. We apply Lemma~\ref{lemma-huber} with 
$\lambda=1$. It follows that $W$ contains a curve $\gamma$ tending to $\infty$ such that 
\begin{equation}\label{2d}
\int_\gamma \left|\frac{F'(z)}{F(z)}\right|\cdot|\dif z| =
\int_\gamma \frac{|\dif z|}{|E(z)|}<\infty.
\end{equation}
This means that the image of $\gamma$ under a branch of $\log F$ has
 finite Euclidean length, and hence this branch tends to some value $\beta\in\C$ as 
 $z$ tends to $\infty$ along $\gamma$.
 Setting $\alpha:=e^\beta\in\C^*$ we thus have $F(z)\to\alpha$ as $z\to\infty$, $z\in\gamma$. 
\end{proof}

When $F$ is locally univalent, then $E$ is entire and hence every transcendental singularity of 
 $E$ over $\infty$ is direct.  
  In particular, 
  every neighborhood of a transcendental singularity of $G$ over $\infty$ is also
 a neighborhood of a direct singularity of $E$, and thus contains an asymptotic path of $F^{-1}$. 
 Under the hypotheses of Proposition~\ref{prop:FGsingularities}, the corresponding
 singularity of $F^{-1}$ is logarithmic by~\cite[Corollary~1.1]{langley2019sing}.
To prove Proposition~\ref{prop:FGsingularities}, we use an estimate on the derivative 
   of a function having a logarithmic singularity, which is a consequence of
   Koebe's theorem. Such estimates are useful in other
   contexts, notably in the study of the class $B$ in complex dynamics,
   and therefore we state the result in this generality for future reference. 
   (A similar estimate is also used by Langley; see the second displayed formula in the proof of \cite[Proposition 2.1]{L2}.)
\begin{lemma}\label{lem:Koebe}
Let $H = \{z\colon \re z > 0\}$ be the right half-plane and $\phi\colon H\to \C$
be univalent.  Let $z_0,z\in H$ with $\re z \geq \re z_0$. Then
\begin{equation}\label{eqn:phiestimate}
\lvert \phi'(z)\rvert \geq \lvert \phi'(z_0)\rvert \cdot
\left(1 + \frac{\lvert z - z_0\rvert}{\re z_0}\right)^{-4}.
\end{equation}
\end{lemma}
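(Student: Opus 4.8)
The plan is to reduce \eqref{eqn:phiestimate} to the classical distortion estimates for univalent functions on the unit disk via a conformal change of variables. First I would fix $z_0 \in H$ and transport the right half-plane to the unit disk. The Koebe distortion theorem controls $|g'|$ on $\D$ in terms of $|g'(0)|$ for univalent $g\colon\D\to\C$, namely $|g'(w)| \le |g'(0)|(1+|w|)/(1-|w|)^3$ and $|g'(w)| \ge |g'(0)|(1-|w|)/(1+|w|)^3$. So the strategy is: choose a M\"obius map $\psi_{z_0}\colon \D\to H$ sending $0$ to $z_0$, apply the distortion theorem to $g = \phi\circ\psi_{z_0}$, and then translate the resulting inequality back to coordinates on $H$ using the chain rule $\phi'(z) = g'(w)/\psi_{z_0}'(w)$ where $w = \psi_{z_0}^{-1}(z)$.

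Concretely, a convenient choice is $\psi_{z_0}(w) = \re(z_0)\cdot\frac{1+w}{1-w} + i\,\im(z_0)$, which maps $\D$ onto $H$ with $\psi_{z_0}(0) = z_0$ and $\psi_{z_0}'(w) = 2\re(z_0)/(1-w)^2$, so $\psi_{z_0}'(0) = 2\re(z_0)$. Writing $w = \psi_{z_0}^{-1}(z)$, one computes $1 - w = 2\re(z_0)/(z - i\,\im(z_0) + \re(z_0))$, hence $|1-w| = 2\re(z_0)/|z + \overline{z_0}|$ and $|\psi_{z_0}'(w)| = |z + \overline{z_0}|^2/(2\re(z_0))$. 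The lower distortion bound for $g = \phi\circ\psi_{z_0}$ gives $|g'(w)| \ge |g'(0)|(1-|w|)/(1+|w|)^3$, and since $g'(0) = \phi'(z_0)\psi_{z_0}'(0) = 2\re(z_0)\phi'(z_0)$, dividing by $|\psi_{z_0}'(w)|$ yields a lower bound for $|\phi'(z)|$ in terms of $|\phi'(z_0)|$ and the hyperbolic data $1\pm|w|$.

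The remaining task is to verify that, under the hypothesis $\re z \ge \re z_0$, the quantity $(1-|w|)/(1+|w|)^3$ combined with the Jacobian factors dominates $(1 + |z-z_0|/\re z_0)^{-4}$. The key elementary facts are that $\re z \ge \re z_0$ forces $|z + \overline{z_0}| \ge |z - z_0|$ and $|z + \overline{z_0}| \ge 2\re z_0$ (since $\re(z + \overline{z_0}) = \re z + \re z_0 \ge 2\re z_0 > 0$), together with the identity $|z + \overline{z_0}|^2 - |z - z_0|^2 = 4\,\re z\,\re z_0$. From these one extracts $1 - |w|^2 = |1-w|^2 - |w - \ldots|$\,---\,more directly, $1 - |w|^2 = (|z+\overline{z_0}|^2 - |z-\overline{z_0}|^2)\cdot\text{(positive factor)}$, leading after simplification to bounds of the form $1 - |w| \ge c\,\re z_0/|z + \overline{z_0}|$ and $1 + |w| \le 2$. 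Assembling these gives $|\phi'(z)| \ge |\phi'(z_0)|\cdot(\re z_0)^2/|z + \overline{z_0}|^2 \cdot (\text{const})$, and using $|z + \overline{z_0}| \le |z - z_0| + 2\re z_0 \le \re z_0(2 + |z-z_0|/\re z_0) \le 2\re z_0(1 + |z - z_0|/\re z_0)$ converts this into the desired fourth-power bound, absorbing the numerical constants so the final constant is exactly $1$.

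I expect the main obstacle to be purely bookkeeping: tracking the conformal factors and confirming that the numerical constants collapse to give the clean exponent $-4$ with leading constant $1$ rather than something weaker. The geometry is entirely standard (Koebe plus a M\"obius transformation), so there is no conceptual difficulty; the care needed is in choosing the normalization of $\psi_{z_0}$ so that the algebra with $|z + \overline{z_0}|$ is transparent, and in using the hypothesis $\re z \ge \re z_0$ precisely where it is needed to bound $|w|$ away from $1$ in the correct direction.
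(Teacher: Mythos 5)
Your proposal is correct and is essentially the paper's own argument: transport Koebe's lower distortion bound from $\D$ to $H$ via a M\"obius map sending $0$ to $z_0$, use the identity $|z+\overline{z_0}|^2-|z-z_0|^2=4\,\re z\,\re z_0$ together with $|z+\overline{z_0}|\le |z-z_0|+2\re z_0$ and the hypothesis $\re z\ge \re z_0$, and the constants collapse to give exactly the exponent $-4$ with constant $1$. The only cosmetic difference is that the paper first normalizes $z_0=1$ and $\phi'(z_0)=1$ by affine pre- and post-composition, whereas you carry $z_0$ explicitly through the computation.
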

\begin{proof}
Pre- and post-composing by suitable affine maps, we may assume that $z_0=1$ and
$\phi'(z_0)=1$. 
Put $M(z):=(z-1)/(z+1)$. Then $M$ maps $H$ conformally to the unit disk and
we have $M'(z)=2/(z+1)^2$.
Set $\psi :=  \phi\circ M^{-1}$.
Since $\psi'(0)=\phi'(1)/M'(1)=2$,
Koebe's distortion theorem yields that
\[
| \psi'(w) | \geq 2\frac{1-|w|}{(1+|w|)^3}
\]
for $|w|<1$.
Since $\phi'(z)=\psi'(M(z))M'(z)$ we thus have
\[
|\phi'(z)|
\geq 2\frac{1-|M(z)|}{(1+|M(z)|)^3} \cdot\frac{2}{|z+1|^2}
=4\frac{|z+1|-|z-1|}{(|z+1|+|z-1|)^3}
=16\frac{\re(z-1)+1}{(|z+1|+|z-1|)^4} .
\]
Since $|z+1|\leq |z-1|+2$ we conclude that 
\[
|\phi'(z)|
\geq \frac{1}{(1+|z-1|)^4}
\]
for $\re z\geq 1$.
\end{proof}

\begin{proof}[Proof of Proposition~\ref{prop:FGsingularities}]
 Let $E$, $F$ and $G$ be as in the statement of the Proposition.
   Let $\alpha\in\C^*$ be an asymptotic value of $F$. As already mentioned,
   every singularity $\xi$ of $F$ over $\alpha$ is logarithmic. 
   
 \noindent\emph{Claim.} 
    If $U$ is a sufficiently small neighborhood of $\xi$, 
      then $\lvert G(z)\rvert$ is bounded on $\partial U$. Moreover,
      for large enough $R>0$, the set $\{z\colon \lvert G(z)\rvert > R\}\cap U$
      is unbounded and connected. 

 To prove the claim, observe first that $1/G(z)$ is the derivative of 
   $\zeta\mapsto \log F(\exp(\zeta))$, where $\exp\zeta = z$. 
To study this in more detail, choose $\beta$ with $\exp\beta=\alpha$
and put $D:=D(\beta,\varepsilon)$ for some small $\varepsilon>0$.
Here and in the following $D(\beta,\varepsilon)$ denotes the open disk of radius $\varepsilon$ around~$\beta$.
Let $\Omega$ be the connected component of $F^{-1}(\exp(D))$ that is a neighborhood
   of $\xi$. If $\eps$ is sufficiently small, then 
    $F\colon \Omega \to \exp(D)\setminus \{\alpha\}$ is a universal covering and
    $0\notin \overline{\Omega}$.
   Let $T$ be a connected component of $\exp^{-1}(\Omega)$. 
   Let $\lambda$ be the branch of $(\log F)\circ \exp$ on $T$ that takes values in
   $D\setminus\{\beta\}$; then $\lambda$ is also a universal covering. If $\zeta\in T$ and
        $z = \exp(\zeta)$, then 
        \begin{equation}\label{eqn:lambda} \lambda'(\zeta) =  \frac{zF'(z)}{F(z)} = \frac{1}{G(z)}. \end{equation}
        
If $\phi$ is a conformal map from $H$ onto $T$, then
$\lambda\circ\phi\colon H\to D\setminus\{\beta\}$ is a universal covering. 
Another universal covering from $H$ onto $D\setminus\{\beta\}$ is given by 
$w\mapsto \varepsilon \exp(-w) +\beta$. We may normalize $\phi$ so that these 
two maps are equal.
Thus
\[ 
\lambda(\phi(w))=\varepsilon \exp(-w) +\beta 
\] 
for $w\in H$.
Set $g := G\circ \exp\circ \phi$. Then, by~\eqref{eqn:lambda}, 
\begin{equation}\label{eqn:Gphi}
\lvert g(w) \rvert = \lvert G(\exp(\phi(w)))\rvert 
=\frac{1}{|\lambda'(\phi(w))|}
= \frac{1}{\eps}\cdot |\phi'(w)|\cdot \exp(\re w), 
\end{equation}
for $w\in H$. 
Since $T$ is disjoint from its $2\pi i \Z$-translates, we have 
\begin{equation}\label{eqn:ELbound}
\lvert \phi'(w)\rvert \leq \frac{4\pi}{\re w}
\end{equation} 
for all $w\in H$ by Koebe's theorem.
(See~\cite[Lemma~1]{Eremenko1992}, and compare \cite{ELconstant}.) 
Thus, by~\eqref{eqn:Gphi}, $\lvert g(w)\rvert$ is bounded when $\re w$
is bounded away from $0$ and $\infty$.
Note also that if
$w\in H$ with $\re w=R>0$, then $z=\exp\phi(w)$ satisfies 
\[
|(\log F)(z)-\beta|=|\lambda(\phi(w))-\beta|=\varepsilon\exp(-R). 
\]
The first part of the Claim follows for the component $U$ of 
$F^{-1}(\exp(D(\beta, \varepsilon e^{-R})))$ 
contained in~$\Omega$, and in fact for every sufficiently small neighborhood $U$ of~$\xi$.
       
On the other hand, by~\eqref{eqn:Gphi} and Lemma~\ref{lem:Koebe},
for $\re w\geq \re w_0$, 
\begin{equation}\label{eqn:lowerbound}
\frac{\lvert g(w)\rvert}{\lvert g(w_0)\rvert} \geq 
\exp(\re w - \re w_0) \cdot \left(1 + \frac{\lvert w - w_0\rvert}{\re w_0}\right)^{-4} .
\end{equation}
In particular, $g(w)$ tends to infinity along every horizontal line. Moreover, suppose that $\re w_0 \geq 1$ and 
$w$ belongs to the sector of opening angle $\pi/2$ based at $w_0$; i.e., $\re w - \re w_0 > \lvert \im w - \im w_0\rvert$. 
Then the right-hand side of~\eqref{eqn:lowerbound} is bounded below by
$1/(1+\sqrt{2})^4$.
        
Now let $R > 4\pi e$ and let $V$ be a connected component of 
\[
\{w\in H\colon \re w > 1\text{ and } \lvert g(w)\rvert > R\}.
\]
Then $\re w > 1$ for all $w\in\overline{V}$ by~\eqref{eqn:Gphi} 
and~\eqref{eqn:ELbound}, and hence $\lvert g(w)\rvert = R$ for all
$w\in\partial V$. Since $g$ is unbounded on $V$  there is some $w_0\in V$ with
$\lvert g(w_0)\rvert > (1+\sqrt{2})^4 R$. So $V$ contains a sector based at $w_0$ as above, and in particular
all sufficiently large points at argument between $-\pi/5$ and $\pi/5$. Hence the component
$V$ is unique, and the Claim is proved.

  To complete the proof, recall that, since $E$ and hence $G$ are of finite order, the number $n$ of
    singularities of $G^{-1}$ over $\infty$ is finite. Let $K_0>0$ be so large that $\{z\colon \lvert G(z)\rvert > K_0\}$ has exactly $n$ unbounded components,
    one component $V(S)$ for each singularity $S$ of~$G^{-1}$. By Lemma~\ref{lem:ESingularities}, there is a singularity 
    $S'$ of $F^{-1}$ over some value $\alpha\in\C^*$ such that every neighborhood of $S'$ intersects $V(S)$. By the Claim, every neighborhood of $S'$ is also
    a neighborhood of $S$, and hence any sequence of points converging to $S$ also converges to~$S'$. That the map $S\mapsto S'$ is a bijection
    also follows from the Claim. 
\end{proof}

\begin{proof}[Proof of Proposition~\ref{prop:lemma5}]
Suppose that $E$ has infinitely many positive zeros, and let $\Omega$ be a logarithmic tract of $F$
 over a point in $\C^*$. We may assume that $\Omega$ contains no zeros and poles of $F$ 
 and hence no zeros of~$E$. Since a logarithmic tract is simply connected 
and since  a logarithmic tract intersecting the real axis is symmetric,
the intersection of $\Omega$ with the real axis is connected, and hence bounded from above.
So if $(x_n)$ is a sequence tending
to $+\infty$, then $(x_n)$ does not converge to a transcendental singularity of
$F^{-1}$. Hence, by Proposition~\ref{prop:FGsingularities}, a sequence $(x_n)$ tending to $+\infty$
cannot converge to a transcendental singularity of~$G^{-1}$. Thus $|G(x_n)|$
is bounded for such a sequence. In other words,
$\lvert G\rvert$ is bounded on the positive real axis, as claimed. 

The case that $E$ has infinitely many negative zeros reduces to the case
of positive zeros by considering $E(-z)$ instead of $E(z)$.
\end{proof}

Having established an upper bound for $G$ and hence $E$ on the real axis,
we now prove a lower bound, outside certain neighborhoods of the zeros.
We use the following lemma due to Laguerre and Borel; see~\cite{Marden1968}.
\begin{lemma}\label{lemma6}
Let $f$ be a real entire function of finite genus $p$ with $m$ non-real zeros.
Then, in addition to one zero of the derivative $f'$ of $f$ between each pair of adjacent
real zeros of~$f$, the derivative $f'$ has at most $p+m$ real and non-real zeros.
\end{lemma}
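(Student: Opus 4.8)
This is a classical fact: for genus $p\le 1$ with only real zeros it is Laguerre's theorem that $f'$ then has only real zeros, and the general count is due to Borel (see also P\'olya--Szeg\H{o}); the plan is to follow the exposition of~\cite{Marden1968}, and I sketch how the argument runs. It combines a lower bound for the number of zeros of $f'$ coming from Rolle's theorem (and using that $f$ is real) with an upper bound coming from an argument-principle count (and using that $f$ has finite genus).

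The first step is to pass to the logarithmic derivative $\Phi:=f'/f$, a real meromorphic function whose poles are exactly the zeros of $f$, all of them simple. At a zero of $f$ of order $k$ the derivative $f'$ has a zero of order $k-1$, while away from the zeros of $f$ the zeros of $f'$ are precisely those of $\Phi$. Hence, for $R>0$ large enough to contain all $m$ non-real zeros and chosen so that $\{\lvert z\rvert=R\}$ avoids the zeros of $ff'$, the argument principle gives
\[
  n\!\left(R,\tfrac{1}{f'}\right)=n\!\left(R,\tfrac{1}{f}\right)+\frac{1}{2\pi}\Delta_{\lvert z\rvert=R}\arg\Phi=n_{\R}(R)+m+\frac{1}{2\pi}\Delta_{\lvert z\rvert=R}\arg\Phi,
\]
where all zeros are counted with multiplicity and $n_{\R}(R)$ is the number of real zeros of $f$ in $\{\lvert z\rvert<R\}$. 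Since $f$ is real, Rolle's theorem produces one zero of $f'$ strictly between each pair of consecutive distinct real zeros of $f$, and a real zero of $f$ of order $k$ is a zero of order $k-1$ of $f'$ accounting for its $k-1$ \emph{coincident pairs}; listing the real zeros in $\{\lvert z\rvert<R\}$ with multiplicity, this produces at least $n_{\R}(R)-1$ of the zeros of $f'$ in the disk, namely exactly the ones the lemma sets aside. Subtracting, the number of remaining zeros of $f'$ in $\{\lvert z\rvert<R\}$ is at most $\tfrac{1}{2\pi}\Delta_{\lvert z\rvert=R}\arg\Phi+m+1$, so, since these form a discrete set, the lemma reduces to the estimate $\tfrac{1}{2\pi}\Delta_{\lvert z\rvert=R}\arg\Phi\le p-1$ along a sequence $R\to\infty$.

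For this winding estimate I would use the Hadamard factorization $f(z)=cz^{q}e^{g(z)}\prod_k E_p(z/z_k)$ with $g$ a real polynomial of degree at most~$p$; since $\frac{d}{dw}\log E_p(w)=w^{p}/(w-1)$, this yields
\[
  \Phi(z)=\frac{q}{z}+g'(z)+\sum_k\frac{z^{p}}{z_k^{p}(z-z_k)}.
\]
The rational part $q/z+g'(z)$ is a ratio of polynomials of degrees at most $p$ and $1$, hence has winding at most $p-1$ along a large circle. When $p\le 1$ the sum over the real $z_k$ is, up to a real additive constant, a Nevanlinna function, so its zeros and poles strictly interlace on the real axis and contribute no extra winding; combined with the finitely many terms from the $m$ non-real zeros (already accounted for by the $+m$ above) and the reflection symmetry $\Phi(\bar z)=\overline{\Phi(z)}$, this recovers Laguerre's theorem and its low-genus refinement. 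For general $p$ the tail is $O(R^{p})$ outside the standard family of exceptional disks about the $z_k$, and the task is to bound its argument variation there; this is where the classical minimum-modulus and exceptional-disk estimates for canonical products enter, and, together with the reflection symmetry (which forces $\tfrac{1}{2\pi}\Delta_{\lvert z\rvert=R}\arg\Phi$ to be an even integer), they give the bound $p-1$ along a suitable sequence $R_j\to\infty$.

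The main obstacle will be exactly this winding estimate with the sharp constant $p-1$: controlling the oscillation of $\arg(f'/f)$ on large circles from the finite-genus hypothesis alone, with careful bookkeeping of the several $\pm 1$'s (the factor $z^{q}$, multiple zeros, the reflection symmetry). The polynomial case $p=0$ reduces to a degree count together with Rolle, and the genus $\le 1$ case to the Nevanlinna interlacing above; the general case is carried out in~\cite{Marden1968}, whose treatment I would follow.
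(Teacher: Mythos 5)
The paper does not actually prove this lemma: it is quoted as a classical result of Laguerre and Borel, with \cite{Marden1968} given as the only reference, and is then applied as a black box. So your ultimate reliance on Marden's treatment is consistent with what the authors themselves do, and your preparatory bookkeeping is correct: the argument-principle identity for $\Phi=f'/f$, the count of at least $n_{\mathbb{R}}(R)-1$ set-aside zeros coming from Rolle's theorem together with the multiplicities at multiple real zeros, and the resulting reduction of the lemma to the estimate $\tfrac{1}{2\pi}\Delta_{|z|=R}\arg\Phi\le p-1$ along a sequence $R\to\infty$ are all in order.

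Judged as a proof, however, the proposal has a genuine gap precisely at that estimate, and it is not a routine one. The winding number of $\Phi$ along $|z|=R$ equals the number of zeros of $f'$ off the zero set of $f$ inside the disk minus the number of distinct zeros of $f$ there, so the bound $\le p-1$ is of essentially the same depth as the lemma itself; the reduction trades the statement for nothing easier, and the hard part is left entirely to the citation. Moreover, the route you indicate for it is doubtful: minimum-modulus and exceptional-disk estimates control the size of the canonical product on $|z|=R$, but size bounds on a circle do not control the argument variation of $f'/f$ there, and your parity remark is incorrect --- for a real polynomial the winding of $f'/f$ on a large circle is $-1$, an odd integer, even though $\Phi(\bar z)=\overline{\Phi(z)}$ --- which suggests the circle-winding strategy has not been thought through to the end. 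The classical proofs of the sharp count $p+m$ (Laguerre, Borel, and Marden's exposition) do not estimate $\Delta\arg\Phi$ on large circles at all; they work with the partial-fraction representation of $f'/f$ that you wrote down, its monotonicity between consecutive real poles, and the possible locations of its non-real zeros (in the genus $\le1$ case with real zeros, $\operatorname{Im}(f'/f)$ has a fixed sign off the real axis, which is Laguerre's theorem), and it is exactly this analysis --- the entire content of the lemma --- that your sketch defers to \cite{Marden1968}.
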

Here zeros are counted with multiplicities. The result implies that a real  entire function of finite order with
only finitely many non-real zeros has only finitely many 
local minima where $f(x)\geq 0$ and only finitely many local maxima where $f(x)\leq 0$. Moreover, the same is true 
for all its derivatives. Thus we can apply the following fact to the restriction 
of $E$ to the interval between any two successive (and sufficiently large) zeros.

\begin{lemma}\label{lem:C2}
 Let $f\colon [a,b]\to\R$ be $C^2$ with the following properties. 
 \begin{enumerate}
   \item[$(a)$]  $f(a)=f(b)=0$.
   \item[$(b)$]  $\lvert f'(a)\rvert = \lvert f'(b)\rvert \geq  1$.
   \item[$(c)$] $f'$ has a unique zero $c$ in $(a,b)$.
   \item[$(d)$] $f''$ has at most one zero in $[a,c]$ and at most one in $[c,b]$.
   \item[$(e)$] $f''$ is negative at every local minimum and positive at every
       local maximum. 
 \end{enumerate}
Then
 \begin{equation} \label{x3}
    \lvert f(x)\rvert > \frac{\min\{x-a,b-x\}}{20}
 \end{equation}
 for all $x\in (a,b)$. 
\end{lemma}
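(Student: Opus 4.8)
The plan is to argue entirely on the real line, organising the proof around the sign and monotonicity pattern of $f''$. First I would normalise. Since $c$ is the only critical point in $(a,b)$ and $f(a)=f(b)=0$, $f$ is strictly monotone on each of $[a,c]$ and $[c,b]$, so $f$ has constant sign on $(a,b)$; replacing $f$ by $-f$ if necessary (which preserves (a)--(e)) I may assume $f>0$ on $(a,b)$. Then $f$ is increasing on $[a,c]$ and decreasing on $[c,b]$, and by (b) $f'(a)\ge1$ and $f'(b)\le-1$. Since the substitution $x\mapsto a+b-x$ carries the hypotheses to themselves, it is enough to prove $f(x)>(x-a)/20$ for $x\in(a,\tfrac{a+b}{2}]$; applying this to $f$ and to its reflection, and noting the two estimates agree at the midpoint, gives the full statement. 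From now on I take $a=0$, $b=L$ and write $M:=f'(0)\ge1$.

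Next I would determine the structure of $f''$. Since $f'(0)=M>0=f'(c)$, $f''$ is negative somewhere on $(0,c)$; moreover $f''$ cannot change sign from $-$ to $+$ at a point $d'\in(0,c)$, for then $f'$ would be increasing on $[d',c]$ and hence $f'(d')\le f'(c)=0$, contradicting $f'>0$ on $(0,c)$. With (d) this forces the sign pattern of $f''$ on $[0,c]$ to be either $\le0$ throughout or $+$ then $-$; the symmetric argument on $[c,L]$ gives $\le0$ near $c$ and at most one sign change, from $-$ to $+$. Hence there are $0\le d<c<e\le L$ with $f''\ge0$ on $[0,d]\cup[e,L]$, $f''\le0$ on $[d,e]$, $f'$ nondecreasing on $[0,d]$ (so $f'(d)\ge M$) and on $[e,L]$ (so $f'(e)\le f'(L)=-M$). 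Finally I would invoke (e) on $[d,e]$: since $f''\le0$ there it has no interior local maximum, so it has at most one interior local minimum, i.e.\ $f''$ is monotone on $[d,e]$ or strictly decreasing and then strictly increasing.

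On the two convex tails the estimate is immediate: $f(x)\ge f(0)+f'(0)x=Mx\ge x$ on $[0,d]$, and $f(x)\ge f(L)+f'(L)(x-L)=M(L-x)\ge L-x$ on $[e,L]$; in particular $f(d)\ge Md$ and $f(e)\ge M(L-e)$, and the claim holds whenever $x\le L/2$ with $x\le d$ or $x\ge e$. It remains to treat $x\in[d,e]\cap(0,L/2]$, where $f$ lies above each of its chords. The key is a lower bound for $f$ at $c$, which I would extract from the monotonicity of $f''$. If $f''$ is decreasing on $[d,e]$ then $f'$ is concave there, so $f'(t)\ge f'(d)\,\frac{c-t}{c-d}\ge M\,\frac{c-t}{c-d}$ on $[d,c]$, and integration already gives $f(x)\ge f(d)+\tfrac12 M(x-d)\ge\tfrac12 Mx$ for all $x\in[d,c]$, with a symmetric estimate on $[c,e]$. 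If $f''$ is increasing on $[d,e]$ then $f'$ is convex, so $-f'$ is concave on $[c,e]$ with values $0$ at $c$ and $\ge M$ at $e$, whence $-f'(t)\ge\frac{M}{e-c}(t-c)$ and $f(c)\ge f(e)+\tfrac12 M(e-c)$. The V-shaped case is split according to whether $c$ lies in the decreasing or the increasing part of $f''$, and reduces locally to one of the two preceding cases. Combining the resulting increment bounds with $f(d)\ge Md$ and $f(e)\ge M(L-e)$, then propagating by the concavity inequality $f(x)\ge\frac{(e-x)f(d)+(x-d)f(e)}{e-d}$ on $[d,e]$ and using the crude bound $f(x)\ge f(d)\ge Md$ for $x$ near $d$ (the two ranges overlap), one should arrive at $f(x)>x/20$ throughout $[d,e]\cap(0,L/2]$.

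The hard part will be the sub-case where $f''$ is increasing on $[d,e]$, so $f'$ is convex, and the critical point $c$ lies far from the midpoint: then $f$ could a priori be very small and nearly flat over a long part of $[d,c]$, and the plain concavity bound on $[d,c]$ is too weak. Here one must use (e) quantitatively. If $f''$ were close to $0$ near $c$ then, being increasing, it would stay close to $0$ on $[c,e]$ while remaining $\le0$, so $f'$ would stay near $0$ on $[c,e]$, contradicting $|f'(e)|\ge M$; this forces $|f''(c)|\ge M/(e-c)$, hence $f'(t)\ge\frac{M}{e-c}(c-t)$ on $[d,c]$ by convexity of $f'$, and one also uses that $f'(c)=0$ lies below the chord of the convex function $f'$ on $[d,e]$ to locate $c$. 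Dovetailing these with the boundary data at $b$ (namely $f(b)=0$ and $|f'(b)|=M$, which near $b$ makes $f$ no smaller than $L-x$) is the delicate point, and is the reason the constant $20$ is not optimal; the remaining work is routine bookkeeping to verify that every case yields at least the factor $1/20$.
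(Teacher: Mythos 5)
Your structural reduction is sound as far as it goes: the sign pattern of $f''$ forced by (c), (d) and the endpoint slopes does give a decomposition $[0,d]\cup[d,e]\cup[e,L]$ with $f$ convex on the outer pieces (whence $f(x)\ge Mx$ and $f(x)\ge M(L-x)$ there), $f$ concave on $[d,e]$, $f'(d)\ge M$, $f'(e)\le -M$, and (e) making $f''$ quasi-convex on $[d,e]$ (only \emph{weakly} decreasing-then-increasing, not strictly, but that is harmless). The genuine gaps are all in the middle piece, which is where the lemma actually lives. First, in the case that $f''$ is decreasing on $[d,e]$ you invoke ``a symmetric estimate on $[c,e]$'', but the situation is not symmetric: concavity of $f'$ gives the chord lower bound $f'(t)\ge\frac{c-t}{c-d}f'(d)$ on $[d,c]$, while on $[c,e]$ the same concavity gives $f'(t)\ge\frac{t-c}{e-c}f'(e)$, i.e.\ an \emph{upper} bound on $-f'$, so the mirrored inequality $f(x)\ge f(e)+\tfrac12 M(e-x)$ does not follow, and the points of $[c,e]\cap\bigl(0,\tfrac{L}{2}\bigr]$ are left uncovered. (A bound can be salvaged there: decreasing $f''$ forces $-f'(t)\ge\frac{M}{c-d}(t-c)$ for $t\ge c$, which together with $f(e)>0$ makes $f(c)$ large, and one then uses concavity of $f$; but this argument is not in your text and needs its own constants.) Second, the V-shaped case does not ``reduce locally'' to the two monotone cases: if the minimum of $f''$ lies in $(c,e)$, the inequality $|f''(c)|\ge M/(e-c)$ — the engine of your increasing case — is unavailable, and the difficulty just described on $[c,e]$ reappears. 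Third, the subcase you yourself flag as hard (increasing $f''$ with $c$ far from the midpoint) and the extraction of a single uniform constant $1/20$ across all cases are only promised (``routine bookkeeping''), not carried out; since a uniform numerical bound valid in every configuration is precisely the content of the lemma, what you have is a plausible plan with at least one mis-stated step rather than a proof.

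For contrast, the paper's argument avoids this case analysis entirely and is much shorter: after rescaling to $[0,1]$ it first proves $f(c)=\max f>1/20$ by contradiction, comparing the minimum of $f''$ on $[0,1/3]$ and on $[2/3,1]$ (both forced below $-5$ if $f(c)\le 1/20$) with its maximum on the middle third (forced above $-72/20$), so that (e) would produce a positive interior maximum of $f''$ and hence two zeros of $f''$ on one side of $c$, contradicting (d); it then shows $f(\varepsilon)\ge\varepsilon f(c)$ by the mean value theorem, using (d) once more to compare the two intermediate slopes. If you want to complete your route, the three points above are exactly where the work is; alternatively, the paper's two-step scheme shows how (d) and (e) can be used qualitatively, with the quantitative content confined to two short computations.
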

\begin{proof}
  By considering the map $x\mapsto f((b-a)x + a)/(b-a)$, we may assume that $a=0$ and $b=1$. 
  Replacing $f$ with $-f$ if necessary, we further assume that $f(x)>0$ for $0<x<1$. Then
 $f'(0)\geq 1$, $f'(1)\leq -1$ and $f''(c)\leq 0$. 

We first claim that 
\begin{equation}\label{eqn:f_lowerbound}
   f(c) = \max_{0\leq x \leq 1} f(x) > \frac{1}{20}. 
\end{equation}
 Suppose, by contradiction, that $f(c) \leq 1/20$. Choose $\eta \in [0,1/3]$ by such that $f''(\eta)$ is minimal. Then
\[ 
f'(x) = f'(0) + \int_{0}^x f''(t)\dif t \geq 1 + \int_0^x f''(\eta) \dif t
=  1 +  f''(\eta) x
\]
for $0\leq x\leq 1/3$, and hence 
\[
\frac{1}{20} \geq f(c) \geq f\!\left(\frac{1}{3}\right) 
= \int_{0}^{1/3} f'(t)\dif t \geq  \int_0^{1/3} (1 + f''(\eta)t)\dif t 
= \frac{1}{3} + \frac{f''(\eta)}{18}.
\]
    Thus $f''(\eta)\leq 18/20 - 6 < -5$. 
    Applying the same argument to $x\mapsto f(1-x)$, we also find $\eta^*\in [2/3,1]$ with $f''(\eta^*)<-5$.

  Without loss of generality,
    we may assume that $c\geq 1/2$. Otherwise, replace $f$ by $x\mapsto f(1-x)$. 
    Let $\tau\in [1/3,1/2]$ be such that $f''(\tau)$ is maximal. For
      $x\in [1/3,1/2]$, we have 
       \[  f'(x) \geq f'(x) - f'\!\left(\frac{1}{2}\right) = -\int_{x}^{1/2} f''(t)\dif t \geq \left(\frac{1}{2}-x\right)\cdot (-f''(\tau)). \]
It follows that
\begin{align*} 
\frac{1}{20} &\geq f(c) 
\geq	 f\!\left(\frac12\right) > f\!\left(\frac12\right) - f\!\left(\frac13\right) 
\\ &
= \int_{1/3}^{1/2} f'(t)\dif t 
\geq -f''(\tau) \int_{1/3}^{1/2} \left(\frac12-t\right)\dif t 
= -\frac{f''(\tau)}{72}, 
\end{align*}
   and thus $f''(\tau)\geq -72/20 > -5>\max\{f''(\eta),f''(\eta^*)\}$. 
   
   It follows that $f''(\tau)$ takes a local maximum between $1/3$ and $2/3$.
By assumption~$(e)$,
     this maximum value is positive. So the interval $[\eta,\eta^*]$ contains at least two zeros of~$f''$,
     bounding an interval on which $f''$ is positive. Since $f''(c)\leq 0$, these zeros either both belong to
     $[0,c]$ or both to $[c,1]$, contradicting $(d)$. Thus~\eqref{eqn:f_lowerbound} is proved. 
     
   To complete the proof of the lemma, we show that 
     \begin{equation}\label{eqn:f_upperbound} \min_{\eps\leq x \leq 1-\eps} f(x) \geq \frac{\eps}{20}. \end{equation}
     The minimum on the left-hand side is assumed either at $x=\eps$ or $x=1-\eps$; we may assume the former. In particular, $c\geq \eps$.
     By the mean value theorem, there exists $\xi\in (0,\eps)$ such that
\begin{equation}\label{x1}
f'(\xi) = \frac{f(\eps)}{\eps}.
\end{equation}
There also exists $\xi^*\in (\eps,c)$ such that
\begin{equation}\label{x2}
f'(\xi^*) = \frac{f(c) - f(\eps)}{c-\eps} \geq \frac{f(c) - f(\eps)}{1-\eps}.
\end{equation}
We may assume  that $f(\eps)\leq \eps$ since otherwise there is nothing to prove.
Then $f'(\xi)\leq 1$. 
We show that $f'(\xi^*)\leq f'(\xi)$.
In fact, otherwise $f'$ would have a local minimum between $0$ and $\xi^*$ and,
since $f'(c)=0$, a local maximum between $\xi$ and $c$, contradicting~$(d)$. Thus
$f'(\xi^*)\leq f'(\xi)$. It now follows from~\eqref{x1} and~\eqref{x2} that 
\[
\eps \cdot (f(c) - f(\eps)) \leq f(\eps)\cdot (1-\eps).
\] 
Thus $\eps f(c) \leq f(\eps)$.
Combined with~\eqref{eqn:f_lowerbound} this yields~\eqref{eqn:f_upperbound} and
hence the conclusion.
\end{proof}
\begin{remark}
The lower bound for $|f(x)|$ in~\eqref{x3} can certainly be improved,
but it suffices for our purposes.
\end{remark}

Given an entire function~$f$, 
a sequence $(r_k)$ is called a sequence of {\em P\'olya peaks of
order $\sigma\in [0,\infty)$} for $\log M(r,f)$, where $M(r,f)=\max_{|z|=r}|f(z)|$ is the maximum modulus,
if for every $\varepsilon>0$ we have
\begin{equation}\label{3a}
\log M(tr_k,f)\leq (1+\varepsilon)t^{\sigma}\log M(r_k,f)
\quad \text{for}\
\varepsilon\leq t\leq \frac{1}{\varepsilon}
\end{equation}
for all large~$k$. Put
\begin{equation}\label{3b}
\rho^*=\sup\left\{ p\in {\mathbb R}  \colon  \limsup_{r,t\to\infty}
\frac{\log M(tr,f)}{t^p\log M(r,f)}=\infty\right\}
\end{equation}
and
\begin{equation}\label{3c}
\rho_*=\inf\left\{ p \in {\mathbb R}  \colon  \liminf_{r,t\to\infty}
\frac{\log M(tr,f)}{t^p\log M(r,f)}=0\right\}.
\end{equation}
Drasin and Shea \cite{Drasin1972} proved that 
P\'olya peaks of order $\sigma$ exist for all finite
$\sigma\in[\rho_*,\rho^*]$ and that we always have
\begin{equation}\label{3d}
0\leq\rho_*\leq\mu(f)\leq \rho(f)\leq\rho^*\leq\infty,
\end{equation}
where $\mu(f)$ denotes the lower order of~$f$.

\begin{proof}[Proof of Theorem~\ref{thm:corollary1}]
We will first consider the case that $E$ has infinitely many positive and 
infinitely many negative zeros; that is, we will prove conclusion~$(b)$
 of the theorem. The minor modifications to handle conclusion~$(a)$
will be discussed at the end of the proof.

We will show that if $(r_k)$ is a sequence of P\'olya peaks of some order~$\sigma$
for $\log M(r,E)$, then $\sigma=N$ for some odd integer~$N$.
In view of~\eqref{3d} this yields that $\mu(E)=\rho(E)=N$.

We consider the subharmonic functions $u_k$ given by
\begin{equation}\label{3e}
u_k(z)= \frac{\log|E(r_k z)|}{\log M(r_k, E)} .
\end{equation}
Given $\varepsilon>0$ we then have 
\begin{equation}\label{3f}
u_k(z)\leq (1+\varepsilon)|z|^{\sigma}
\quad \text{for}\
\varepsilon\leq |z|\leq \frac{1}{\varepsilon} 
\end{equation}
and large~$k$.
This implies (cf.\ \cite[Theorems 4.1.8 and 4.1.9]{Hoermander1990} or
\cite[Theorems 3.2.12 and 3.2.13]{Hoermander1994}) that,
passing to a subsequence if necessary, the limit
\begin{equation}\label{3g}
   u(z)=\lim_{k\to\infty} \frac{\log|E(r_k z)|}{\log M(r_k, E)}
\end{equation}
exists and is either $-\infty$ or a subharmonic function in~$\C$.
Here the convergence is in the Schwartz space~$\mathscr{D}'$.
This implies that we also have convergence in $L^1_{\mathrm{loc}}$.
There are a number of papers where entire and meromorphic functions 
are studied in terms of a subharmonic $u$ obtained as in~\eqref{3g}; 
see, e.g., \cite{Eremenko1993} for further details.

The function $u$ is harmonic in $\C\setminus\R$ and satisfies
\begin{equation}\label{3h}
u(z)\leq |z|^\sigma
\quad \text{for}\ z\in\C
\end{equation}
as well as 
\begin{equation}\label{3h1}
M(1,u)=1,
\end{equation}
so in particular $u\not\equiv-\infty$.
It follows from Proposition~\ref{prop:lemma5} that 
\begin{equation}\label{3i}
u(x)\leq 0 
\quad \text{for}\ x\in\R.
\end{equation}
We shall show that we also have $u(x)\geq 0$ and thus $u(x)=0$ for $x\in\R$.
In order to do so, suppose that $u(x)<0$ for some $x\in\R$. Then there exist
$\delta>0$, $t>0$ and $\xi\in\R\setminus\{0\}$ such that $u(z)<-\delta$ for
$z\in D(\xi,t)$. 
It follows that (see~\cite[(3.2.6)]{Hoermander1994})
\begin{equation}\label{3j}
|E(z)|<1
\quad \text{for}\ z\in D(r_k \xi, r_k t),
\end{equation}
if $k$ is sufficiently large. Hence 
\begin{equation}\label{3k}
|E'(z)|=\left|\frac{1}{2\pi i}\int_{|\zeta-r_k \xi|=r_k t}
\frac{E(\zeta)}{(\zeta-z)^2}\dif\zeta\right|
\leq \frac{4}{r_k t}<1
\quad \text{for}\ z\in D\!\left(r_k \xi, \frac12r_k t\right)
\end{equation}
provided $k$ is sufficiently large.
By the Bank-Laine property~\eqref{defE},
$E$ has no zeros in $D(r_k \xi, r_k t/2)$.

 Suppose that $\xi>0$, 
 and let $(x_n)$ be the sequence of positive zeros of~$E$. (The case $\xi<0$ is analogous, replacing positive by negative zeros). 
We choose $n$ such that $x_n<r_k \xi<x_{n+1}$.
Then $x_n\leq r_k \xi-r_k t/2$ while $x_{n+1}\geq r_k \xi+r_k t/2$.
Applying Lemma~\ref{lem:C2} to the restriction $E|_{[x_n,x_{n+1}]}$, we see that
  \[ E( r_k \xi) \geq \frac{r_k t}{40} > 1 \]
 for sufficiently large $k$. This contradicts ~\eqref{3j}; thus 
\begin{equation}\label{3m}
u(x)= 0 
\quad \text{for}\ x\in\R.
\end{equation}

Since $u$ is harmonic in the upper half-plane, the reflection principle 
for harmonic functions  yields that 
\begin{equation}\label{3n}
v(z)=
\begin{cases} 
u(z) & \text{if}\ \im z\geq 0,\\
-u(\overline{z}) & \text{if}\ \im z< 0,
\end{cases}
\end{equation}
defines a harmonic function $v\colon\C\to\R$.
Next there exists an entire function $w$ with $\re w(0)=0$ such that $v=\im w$. 
It follows from~\eqref{3h} that $w(0)=0$ and 
\begin{equation}\label{3o}
\im w(z) \leq |z|^\sigma
\quad \text{for}\ z\in\C.
\end{equation}
This implies that $w$ is a polynomial and in fact that $\sigma\in\N$ and 
$w(z)=cz^\sigma$ for some $c\in\C$. Hence $\mu(E)=\rho(E)=\sigma\in\N$.

By~\eqref{3h1} we have $|c|=1$.
Since $u(x)=v(x)=\im w(x)=0$ for $x\in\R$ we have $c\in\R$ and thus $c=\pm 1$.
Hence we have $v(re^{i\theta})=cr^\sigma \sin(\sigma\theta)$
and consequently $u(re^{i\theta})=cr^\sigma \sin(\sigma|\theta|)$ for $|\theta|\leq\pi$.
Since $u$ is subharmonic, $u$ cannot have local maxima.
In particular, there are no local maxima on the positive ray.
This yields that $c=1$. Then, since there are no local maxima on the negative 
ray, we conclude that
$\sigma$ is odd, say $\sigma=2n-1$ with $n\in\N$. Thus
\begin{equation}\label{3p}
u(re^{i\theta})=r^\sigma \sin(\sigma|\theta|)
=r^{2n-1} \sin((2n-1)|\theta|)
\quad \text{for}\ |\theta|\leq\pi .
\end{equation}
It follows from~\eqref{3g} and~\eqref{3p} that 
\begin{equation}\label{3p1}
T(r_k,E)=m(r_k,E)\sim \frac{2n}{(2n-1)\pi}\log M(r_k,E)
\end{equation}
and
\begin{equation}\label{3p2}
m\!\left(r_k,\frac{1}{E}\right)\sim \frac{2n-2}{(2n-1)\pi}\log M(r_k,E)
\end{equation}
so that
\begin{equation}\label{3p3}
N\!\left(r_k,\frac{1}{E}\right)\sim \frac{2}{(2n-1)\pi}\log M(r_k,E)
\end{equation}
as $k\to\infty$. This implies that not only $\mu(E)=\rho(E)=\sigma=2n-1$,
but also that $\lambda(E)=2n-1$. 
Moreover, it follows from~\eqref{AE} and the lemma on the
logarithmic derivative~\cite[Chapter~3, \S~1]{GO} that 
\begin{equation}\label{3p4}
m(r,A)=2 m\!\left(r,\frac{1}{E}\right) +\LO(\log r).
\end{equation}
We conclude that we also have $\rho(A)=2n-1$. 
Theorem~\ref{thmA} yields that $n\geq 2$.
This completes the proof of~$(b)$.

Suppose now that all zeros of $E$ are positive.
We proceed as above, but in this case $u$ is harmonic in $\C\setminus [0,\infty)$.
We now apply the above reasoning to $u^*(z)=u(z^2)$.
Then $u^*$ is harmonic in the upper half-plane and $u^*(x)=0$ for $x\in\R$.
We now find that $2\sigma$ is an odd integer, say $2\sigma=2n-1$ with
$n\in\N$,  and that
\begin{equation}\label{3q}
u^*(re^{i\theta})=r^{2\sigma} \sin(2\sigma|\theta|)
=r^{2n-1} \sin((2n-1)|\theta|)
\quad \text{for}\ |\theta|\leq\pi .
\end{equation}
It follows that
\begin{equation}\label{3r}
u(re^{i\theta})=r^{\sigma} \sin(\sigma\theta)
=r^{n-1/2} \sin\!\left(\!\left(n-\frac12\right)\theta\right)
\quad \text{for}\ 0\leq \theta\leq 2\pi  .
\end{equation}
As before we can now conclude that $\lambda(E)=\rho(E)=\rho(A)= n-1/2$.
\end{proof}

\begin{remark}\label{remark33}
One can derive from this proof further regularity
of the asymptotic behavior of $A$ and $E$. Indeed, since we established
that the possible
values of the order $\sigma$ of the P\'olya peaks form a discrete sequence,
we conclude that 
\[
\rho_*=\rho^*=\mu(f)=\rho(f)>0
\]
 in \eqref{3d};
that is, $A$ and $E$ are of regular growth
in the sense of Valiron.
Furthermore, following the argument from \cite[p. 1210-1211]{Eremenko1993},
one can show that
\begin{equation}\label{3s}
\log M(r,E)=r^{\rho}\ell(r),
\end{equation}
where $\ell$ is a slowly varying function in the sense of Karamata;
that is, $\ell(cr)/\ell(r)\to 1$ as $r\to\infty$ for every $c>1$. Moreover,
\begin{equation}\label{3t}
\log|E(re^{i\theta})|\sim r^\rho\ell(r)h(\theta)\quad \text{as}\ r\to\infty,
\end{equation}
outside an exceptional set satisfying \eqref{i3}. Similar statements apply
to the function $A$.

However, as we mentioned in Remark \ref{remark0},
the method of this section
does not allow to 
show that $A$ and $E$ are of normal type.
\end{remark}
\begin{remark}\label{remark34}
In the proof of Theorem~\ref{thm:corollary1} we used Theorem~\ref{thmA} to conclude 
that $n\geq 2$. This can also be seen directly. In fact, suppose we have $n=1$
in case $(b)$. Then~\eqref{3t} takes the form
$\log|E(re^{i\theta})|\sim r\ell(r) \sin|\theta|$.

To estimate the logarithmic derivative of $E$ we use the Schwarz integral formula.
It says that if $g$ is holomorphic
in a domain containing the closed disk $\overline{D(a,t)}$, then
\begin{equation}\label{schwarz0}
g(z)=\frac{1}{2\pi i} \int_{|\zeta-a|=t}\frac{\zeta+z}{\zeta-z}\re g(\zeta)\frac{\dif\zeta}{\zeta} +i\im g(a)
\end{equation}
for $z\in D(a,t)$. It follows that
\begin{equation}\label{schwarz1}
g'(z)=\frac{1}{\pi i} \int_{|\zeta-a|=t}\frac{\re g(\zeta)}{(\zeta-z)^2}\dif\zeta
\quad\text{and}\quad
g''(z)=\frac{2}{\pi i} \int_{|\zeta-a|=t}\frac{\re g(\zeta)}{(\zeta-z)^3}\dif\zeta.
\end{equation}
Since the zeros of $E$ are real, 
we may apply this to a branch $g$ of $\log E$ in the upper or lower half-plane.
Noting that $E'/E=g'$ and $E''/E=g''+(g')^2$ we find that given $\varepsilon>0$
there exists $C>0$ such $|E'(z)/E(z)|\leq C\ell(|z|)$
and $|E''(z)/E(z)|\leq C\ell(|z|)^2$
if $\varepsilon\leq |\arg z|\leq \pi-\varepsilon$ and $|z|$ is large enough.
Together with~\eqref{AE} we see that $|A(z)|=\LO(\ell(|z|)^2)$ and hence
$|A(z)|=\LO(|z|^\varepsilon)$ as $|z|\to\infty$,
$\varepsilon\leq |\arg z|\leq \pi-\varepsilon$. 
Since the order of $A$ is finite,
for sufficiently small $\eps>0$ the  Phragm\'en-Lindel\"of
principle~\cite[\S~I.14]{L} shows that the last estimate also holds 
for $|\arg z|\leq\varepsilon$ and $|\arg z-\pi|\leq\varepsilon$.
It follows that $A$ is constant.
A similar argument can be made in case~$(a)$.
\end{remark}
\begin{remark}\label{remark35}
It follows from the proof of Theorem~\ref{thm:corollary1} that the Nevanlinna deficiency $\delta(0,E)$ 
is positive. In fact, \eqref{3p1} and~\eqref{3p2} yield that $\delta(0,E)=(n-1)/n$.
\end{remark}

\section{Preliminaries for the proof of Theorem~\ref{theorem1}} \label{section3}
\subsection{Gluing of elements}\label{topo-hol}
As a general reference for the concepts considered in this section we 
mention~\cite{Eremenko2004}.
We consider connected bordered oriented surfaces~$D$, not necessarily compact.
The border $\partial D$ is equipped with the standard orientation
(such that the interior stays on the left).

An {\em element} $(D,f)$ is a pair, where $D$ is a bordered surface and 
$f\colon D\to\bC$ is a continuous function that is topologically holomorphic on
the interior of $D$ and locally injective near every point of the border $\partial D$. 
Two pairs $(D_1,f_1)$ and
$(D_2,f_2)$ are called {\em equivalent}
if there is an orientation-preserving homeo\-morphism $\phi\colon D_1\to D_2$
such that $f_1=f_2\circ\phi$.
Of course, we write this as $(D_1,f_1)\sim (D_2,f_2)$.

There is a unique conformal structure on $D$
that makes $f$ holomorphic. Equivalence classes are called {\em Riemann
surfaces spread over the sphere}. By the Uniformization Theorem, each
pair $(D,f)$ with open simply connected $D_0$ is equivalent to
a pair $(D_0,f_0)$ where $D$ is 
the whole plane or an open disk and $f_0$ is meromorphic in~$D_0$.

Consider two elements $(D_1,f_1)$ and $(D_2,f_2)$ and suppose that
there are two closed arcs $I_j\subset\partial D_j$ which are mapped by $f_j$ 
onto the same arc homeo\-morphically, but with opposite orientations.
Recall that, by definition of an element, the  $f_j$ have no critical points on $I_j$.
Then there is a homeo\-morphism $\phi\colon I_1\to I_2$
such that $f_1=f_2\circ\phi$
on $I_1$. By gluing $D_1$ and $D_2$ along this homeo\-morphism
we obtain a new element $(D,f)$, where $f=f_j$ on $D_j$. 
(For the  formal definition
of gluing of topological spaces see, for example, \cite[\S~2.5]{Bo}.)
We can apply this procedure when $(D_1,f_1)=(D_2,f_2)$,
and glue an element to itself.

The following result says that this
gluing operation is compatible with the equivalence relation on elements.

\begin{proposition}\label{proposition1}
If $(D_1,f_1)\sim (D_2,f_2)$ and $(D_3,f_3)\sim (D_4,f_4)$,
and we can glue $(D_1,f_1)$ to $(D_3,f_3)$ along some arcs, then $(D_2,f_2)$
can be glued to $(D_4,f_4)$ along the corresponding arcs, and results
of the gluings are equivalent.
\end{proposition}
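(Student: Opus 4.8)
The plan is to use the universal property of quotient (gluing) topologies to produce, from the given equivalences, a homeomorphism between the two glued surfaces that intertwines the maps to $\bC$.

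First I would set up notation carefully. By hypothesis there are orientation-preserving homeomorphisms $\psi\colon D_1\to D_2$ with $f_1=f_2\circ\psi$ and $\chi\colon D_3\to D_4$ with $f_3=f_4\circ\chi$. The gluing of $(D_1,f_1)$ to $(D_3,f_3)$ is along arcs $I_1\subset\partial D_1$, $I_3\subset\partial D_3$ via a homeomorphism $\alpha\colon I_1\to I_3$ with $f_1=f_3\circ\alpha$ on $I_1$; the resulting space is $X=(D_1\sqcup D_3)/\!\!\sim$ where $x\sim\alpha(x)$ for $x\in I_1$, with the quotient topology, and $f\colon X\to\bC$ is the well-defined map induced by $f_1$ and $f_3$. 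On the other side, the corresponding arcs are $I_2=\psi(I_1)\subset\partial D_2$ and $I_4=\chi(I_3)\subset\partial D_4$, and the relevant gluing homeomorphism is $\beta=\chi\circ\alpha\circ\psi^{-1}\colon I_2\to I_4$. I would first check that $\beta$ is indeed admissible for gluing, i.e. that it is a homeomorphism of arcs satisfying $f_2=f_4\circ\beta$ on $I_2$: this is immediate from $f_1=f_2\circ\psi$, $f_1=f_3\circ\alpha$, $f_3=f_4\circ\chi$. Also $I_2,I_4$ are mapped homeomorphically by $f_2,f_4$ onto the common arc with opposite orientations, since $\psi$ and $\chi$ are orientation-preserving homeomorphisms of the bordered surfaces (hence of their borders with the induced orientation). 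Let $Y=(D_2\sqcup D_4)/\!\!\sim$ be this gluing, with induced map $g\colon Y\to\bC$.

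Next I would construct the homeomorphism $\Phi\colon X\to Y$. The disjoint-union map $\psi\sqcup\chi\colon D_1\sqcup D_3\to D_2\sqcup D_4$ is a homeomorphism, and by the defining relation $\beta=\chi\circ\alpha\circ\psi^{-1}$ it carries the equivalence relation on $D_1\sqcup D_3$ to the one on $D_2\sqcup D_4$: if $x\in I_1$ then $x\sim\alpha(x)$ is sent to $\psi(x)$ and $\chi(\alpha(x))=\beta(\psi(x))$, which are identified in $Y$; and conversely using $(\psi\sqcup\chi)^{-1}$. Hence $\psi\sqcup\chi$ descends to a continuous bijection $\Phi\colon X\to Y$ by the universal property of the quotient topology, and its inverse descends from $(\psi\sqcup\chi)^{-1}$ and is likewise continuous, so $\Phi$ is a homeomorphism. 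It is orientation-preserving because $\psi$ and $\chi$ are and because the gluing is along boundary arcs in an orientation-compatible way, so the orientations of $D_1,D_3$ (resp.\ $D_2,D_4$) patch to an orientation of $X$ (resp.\ $Y$) and $\Phi$ respects them. Finally $g\circ\Phi=f$: on the image of $D_1$ in $X$ this reads $f_2\circ\psi=f_1$, and on the image of $D_3$ it reads $f_4\circ\chi=f_3$, both of which hold; these agree on the glued arc, so they define the single map $g\circ\Phi$ and it equals $f$. Therefore $(X,f)\sim(Y,g)$, which is the assertion. The only point needing a little care—and the main (mild) obstacle—is verifying that the resulting object $(D,f)$ is again a legitimate \emph{element}, i.e.\ that $X$ carries a bordered-surface structure near the glued arc and that $f$ is topologically holomorphic there; but this is exactly the standard local model for gluing along boundary arcs (two half-disk charts glued along a diameter, with $f$ given by $z\mapsto z^n$ in suitable coordinates), and since the charts and the local form of $f$ transport under $\Phi$, establishing it for one gluing establishes it for the other simultaneously.
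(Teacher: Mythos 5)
Your proposal is correct and follows essentially the same route as the paper: you define the transported gluing map $\beta=\chi\circ\alpha\circ\psi^{-1}$ (the paper's $\psi_2=\phi_3\circ\psi_1\circ\phi_1^{-1}$), verify $f_2=f_4\circ\beta$, and obtain the homeomorphism of the glued surfaces piecewise from $\psi$ and $\chi$, checking compatibility on the glued arc. Your appeal to the universal property of the quotient topology and the remark on the local bordered-surface model merely make explicit what the paper leaves implicit.
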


\begin{proof}
Let $\phi_1\colon D_1\to D_2$ and $\phi_3\colon D_3\to D_4$ be homeo\-morphisms
as in the definition of equivalence and let
$\psi_1\colon I_1\to I_3$, where $I_1\subset
\partial D_1$ and $I_3\subset\partial D_3$, be the gluing homeo\-morphism. 
Thus $f_1=f_2\circ \phi_1$, $f_3=f_4\circ \phi_3$ and
$f_1(x)=f_3(\psi_1(x))$, $x\in I_1$.
We define
\begin{equation}\label{g1}
\psi_2=\phi_3\circ\psi_1\circ\phi_1^{-1}\colon \phi_1(I_1)\to\phi_3(I_3),
\end{equation}
and verify that
\[
f_2=f_4\circ\psi_2\quad\mbox{on}\ \phi_1(I_1).
\]
So $(D_2,f_2)$ and $(D_4,f_4)$ can be glued along the arcs $\phi_1(I_1)$ and $\phi_3(I_3)$. 

Now the homeo\-morphism $\phi$ from the gluing of $D_1$ and $D_2$
to the gluing of $D_3$ and $D_4$
is given by the formula
\[
\phi(z)=
\begin{cases}
\phi_1(z) &\text{if}\ z\in D_1,\\
\phi_3(z) &\text{if}\ z\in D_2.
\end{cases}
\]
One checks using \eqref{g1} that $\psi_1$ and $\psi_2$
match on the arc along which $D_1$ and $D_2$ 
were glued;  that is, we have
\[
\phi_3\circ\psi_1(x)=\psi_2\circ\phi_1(x)\quad\text{for}\  x\in I_1.
\qedhere
\]
\end{proof}

If one glues two simply connected surfaces along a connected arc
one obtains a simply connected surface. If one glues one simply connected
surface to itself along two disjoint arcs, one obtains a doubly connected
surface of genus zero which is homeo\-morphic to a ring in the plane.
Every doubly connected open Riemann surface of genus zero is conformally
equivalent to a round ring $\{z\colon r<|z|<R\}$, where $0\leq r<R\leq+\infty$. 

\subsection{Cell decompositions}\label{cell-dec}
 A {\em cell decomposition} of a bordered surface $D$ is a representation
of $D$ as a locally finite union of disjoint {\em cells} of
dimensions~$0$ (vertices),~$1$ (edges) and~$2$ (faces), so that the
boundary of each cell is a union of cells of smaller dimension.
Here a vertex is a point. An edge
is homeo\-morphic to an open interval and its  closure is homeo\-morphic
to a closed interval. A face is homeo\-morphic to an open disk 
and its closure is homeo\-morphic to a closed disk or closed half-plane.

The edges and vertices of a cell decomposition can be of two types: those
which belong to $\partial D$ we call {\em boundary edges (vertices)}, and those
which belong to the interior of $D$ we call {\em inner edges (vertices)}.

Two cell decompositions are {\em combinatorially equivalent} if there is a
bijection $p$ of the set of cells of one of them onto the set of cells of
the other, respecting the cell dimensions, and such that
$p(\partial c)=\partial p(c)$ for every cell~$c$.
This is equivalent to the existence of a homeo\-morphism of the ambient surfaces
which maps each cell homeo\-morphically. We call such pairs of cell
decompositions simply {\em equivalent}.

\subsection{Labeled cell decompositions}\label{labeled-cell}
 Let us consider a cell decomposition $C$ of $\bC$, with two vertices
which we denote $\times$ and $\circ$. Suppose that 
there are $q\in\N$ edges, with $q\geq 2$, each edge connecting $\times$ to $\circ$,
and $q$ faces. All faces are
digons. Suppose that a finite set $A$ containing one point in the interior
of each face is given. Let $(D,f)$ be an element
and let $L$ be a cell decomposition of $D$ such that $f$ maps
every cell of $L$ into a cell of $C$ of the same dimension,
with the following properties:
\begin{itemize}
\item[$(a)$] 
The restriction of $f$ onto the closure of each edge of $L$
is a homeo\-morphism onto the closure of an edge of~$C$, 
\item[$(b)$] 
For each face $c$ of~$L$, the restriction of $f$ onto
$\overline{c}\setminus\{ f^{-1}(a)\}$ is
a covering of $f(\overline{c})\setminus\{ a\}$, where $\{ a\}=A\cap \overline{f(c)}$.
\end{itemize}

It follows from  $(b)$ that each interior edge of $L$ belongs
to the boundaries of two distinct faces. A boundary edge evidently
belongs to the boundary of one face.

We {\em label} each vertex of $L$ by $\times$ or $\circ$, according to
its image, and we label each face of $L$ by the element of $A$
which is contained in its image cell. 
This set of labels defines the image under $f$ of
each cell of~$L$. Indeed, a boundary edge can be labeled by the label of
the unique face to which it belongs, and an inner edge can be labeled
by the pair of labels of two faces to whose boundaries it belongs.
Since every two faces of $C$ have at most one common boundary edge,
the image of each edge of $L$ is determined by the labels of faces and vertices.

\begin{proposition}\label{proposition2}
Let $C$ and $A$ be as above,
and $(D_1,f_1),(D_2,f_2)$ be two elements with labeled cell decompositions
$L_1,L_2$ of $D_1,D_2$ satisfying conditions $(a)$ and $(b)$. If $L_1$ is
combinatorially equivalent to $L_2$, where the equivalence respects the
labels, then $(D_1,f_1)\sim (D_2,f_2)$.
\end{proposition}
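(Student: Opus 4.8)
The plan is to reconstruct the map $f_1$ from the purely combinatorial data of the labeled cell decomposition $L_1$, and likewise for $f_2$, and then show that the combinatorial equivalence between $L_1$ and $L_2$ upgrades to an equivalence of elements. The key observation is that conditions $(a)$ and $(b)$ determine $f_j$ on each cell of $L_j$ \emph{up to the appropriate notion of equivalence}: on each closed edge, $f_j$ is a homeomorphism onto a prescribed closed edge of $C$ (determined by the labels of the adjacent vertices and faces), and any two orientation-compatible homeomorphisms of closed intervals are conjugate by a homeomorphism of the interval fixing the endpoints. On each closed face $\overline{c}$, the restriction of $f_j$ to $\overline{c}\setminus f_j^{-1}(a)$ is a covering of the punctured digon $f_j(\overline{c})\setminus\{a\}$; since $\overline{c}$ is a closed disk with one interior puncture, such a covering is determined up to orientation-preserving homeomorphism by its degree, which in turn is determined by the combinatorics (the number of edges of $L_j$ on $\partial c$ mapping to each of the two edges of the target digon). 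So there is essentially only one element with a given labeled cell decomposition.

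Concretely, I would proceed in three steps. First, build a ``model'' element attached to the combinatorial type: for each abstract face of $L_1$ with its label $a$ and prescribed boundary pattern, fix once and for all a standard topologically holomorphic model (a branched cover of the digon of the right degree, e.g.\ $z\mapsto z^n$ restricted suitably), and observe that the gluing of these models along the edge identifications prescribed by $L_1$ produces an element $(D,f)$ with a labeled cell decomposition combinatorially equivalent to $L_1$. Second, show $(D_1,f_1)\sim(D,f)$: working cell by cell, use the homeomorphism extension lemma for intervals and for disks (the Alexander-type argument, or the uniqueness of finite-degree coverings of a punctured disk up to homeomorphism) to construct homeomorphisms $\phi_c\colon \overline{c}\to\overline{c'}$ matching $f_1$ to $f$, and check they can be chosen to agree on shared edges, then on shared vertices, so that they assemble into a global orientation-preserving homeomorphism $\phi_1\colon D_1\to D$ with $f_1=f\circ\phi_1$. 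Third, apply the same construction to $(D_2,f_2)$ to get $\phi_2\colon D_2\to D$, and conclude $(D_1,f_1)\sim(D,f)\sim(D_2,f_2)$ via $\phi_2^{-1}\circ\phi_1$; here the hypothesis that the combinatorial equivalence $L_1\to L_2$ respects labels guarantees that both elements are glued from the \emph{same} collection of models along \emph{corresponding} identifications, so the two model elements coincide.

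The main obstacle is the bookkeeping in the assembly step: one must choose the cell-by-cell homeomorphisms in a compatible order (vertices first, then edges, then faces) so that the choice made on a lower-dimensional cell is respected by the extension to every higher-dimensional cell containing it in its boundary. On edges this is automatic since a homeomorphism of an interval fixing the endpoints is flexible in the interior; the delicate point is extending a prescribed boundary homeomorphism of $\partial\overline{c}$ (matching $f_1|_{\partial c}$ to $f|_{\partial c'}$) to an interior homeomorphism of the disk intertwining the two branched coverings. This reduces to the following classical fact, which I would invoke: two topologically holomorphic maps from a closed disk to a closed digon, with a single interior critical point of the same multiplicity and inducing the same boundary behaviour, are equivalent by a homeomorphism of the disk, and moreover that homeomorphism can be prescribed on the boundary provided the boundary data already intertwine the maps. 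Granting that, the local finiteness of $L_1$ lets one carry out the assembly (the surface need not be compact, but each cell meets only finitely many others), and Proposition~\ref{proposition1} ensures compatibility of the gluing with equivalence throughout. The reality/symmetry hypotheses play no role here, so no extra care is needed on that front.
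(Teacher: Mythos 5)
Your overall strategy is sound, and its engine is the same as the paper's: the only real content is that on each closed edge the maps are homeomorphisms onto the same edge of $C$ (by label preservation), and on each face the map is a covering of the punctured digon, so that a matching along the $1$-skeleton propagates uniquely into the faces. The paper, however, dispenses with your intermediate ``model'' element: it defines $\phi\colon D_1\to D_2$ directly, first on the $1$-skeleton by $\phi=(f_2|_{\overline{e}\,'})^{-1}\circ f_1|_{\overline{e}}$ on corresponding edges, and then extends $\phi$ into each face by lifting: for $z$ in a face $c$ with label $a$, take an arc $\gamma\subset\overline{c}$ from a boundary point $z_0$ to $z$ with $a\notin f_1(\gamma)$, and let $\phi(z)$ be the endpoint of the $f_2$-lift of $f_1\circ\gamma$ starting at $\phi(z_0)$; condition $(b)$ gives existence and uniqueness of the lift, and the agreement of the (possibly infinite) covering degrees, which is part of the combinatorial equivalence, gives independence of the choice of $\gamma$. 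This direct construction avoids your Alexander-type extension lemma, the vertices-then-edges-then-faces bookkeeping, and the appeal to Proposition~\ref{proposition1}; what your version buys is a reusable statement that each labeled combinatorial type has a canonical representative, which is more than the proposition asks for.

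One point in your write-up does need repair: you treat every face as ``a closed disk with one interior puncture'' and model it by $z\mapsto z^n$, i.e.\ you only consider finite-degree coverings. But in the setting of \S\ref{labeled-cell} a face may contain no preimage of its label $a$ at all; its closure is then a closed half-plane rather than a closed disk, and the restriction of $f_j$ in condition $(b)$ is an infinite-degree (universal) covering of the punctured digon. These $\infty$-gon faces are not a marginal case here: in the application to local homeomorphisms of class $S$ they are precisely the logarithmic tracts over the asymptotic values. So your model construction and the ``classical fact'' you invoke must be formulated so as to include the universal covering of the punctured digon, with the boundary homeomorphism prescribed on infinitely many edges. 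The uniqueness statement you need remains true in that case (and the paper's lifting argument covers it with no extra work), but as written your proof only handles compact faces with a single preimage of $a$.
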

\begin{proof}
We have to define a homeo\-morphism $\phi\colon D_1\to D_2$ such that
$f_1=f_2\circ\phi$. Let $c\mapsto c'$ be the bijection
between cells of $L_1$ and $L_2$. Since this bijection preserves
labels, for each cell $c$ of $L_1$, we have $f_1(c)=f_2(c')$.
This defines a unique homeo\-morphism $\phi$ of the $1$-skeleton of
$L_1$ onto the $1$-skeleton of $L_2$ such that $f_1=f_2\circ\phi$
on the $1$-skeleton. It remains to extend $\phi$ into faces.
Let $\overline{c}$ and $\overline{c}'$ be closures of some corresponding
faces of $L_1$ and $L_2$. Let $a\in A$ be the label of~$c$.
Let $z_0\in\partial c$, and
$w_0=\phi(z_0)\in\partial c'$. Let $z\in c$, and choose an open arc $\gamma$
in $\overline{c}$ from $z_0$ to~$z$ such that $a\not\in f_1(\gamma)$.
There is a unique lift $\gamma_2$ of this curve by $f_2$ starting at $w_0$.
The endpoint $w$ of $\gamma_2$ determines $\phi(z)$. It is evident
that thus defined $\phi$ is the required homeo\-morphism.
\end{proof}

\subsection{Representation of Speiser class functions by line complexes and trees} \label{section4}
The definition of the Speiser class $S$ given in Remark~\ref{remark3} extends
to topologically holomorphic maps. So we say that 
a topologically holomorphic map $f\colon \C\to\bC$ belongs to class $S$
if there is a finite set $A$ such that 
\begin{equation}\label{3}
f\colon \C\setminus f^{-1}(A)\to\bC\setminus A
\end{equation}
is a covering.
For a linear-fractional transformation, this holds with $A=f(\infty)$,
and to avoid trivialities, we assume that $f$ has at least two singular values
(critical or asymptotic values), so the set $A$ contains at least two
points.
For such an $f\in S$,
consider a cell decomposition $C$ of $\bC$ and a finite set $A$ of
its singular values. Let $L_f=f^{-1}(C)$ be the preimage of $C$.
Then $L_f$ is a labeled cell decomposition of $\C$ satisfying the conditions
stated in~\S~\ref{labeled-cell}.

This labeled cell decomposition $L_f$ is called the {\em line complex} of $f$
corresponding to~$C$. It has the following properties:
\begin{itemize}
\item[$(a)$] 
Its $1$-skeleton is a bipartite graph embedded in the plane.
\item[$(b)$] 
The cyclic order of face labels around each $\times$-vertex is the same:
it coincides with the cyclic order of face labels around the $\times$-vertex of~$C$.
The cyclic order of face labels around an $\circ$-vertex is opposite.
\end{itemize}

For fixed $A$ and~$C$, any cell decomposition of $\C$ with
these two properties arises from some topologically holomorphic map of class~$S$.

It follows from Proposition~\ref{proposition2} that any two functions $f$ and $g$
of class $S$ with the same labeled
cell decomposition $L_f=L_g$ (and same $A$ and $C$) are {\em equivalent}:
$f=g\circ\phi$, where $\phi\colon \C\to\C$ is a homeo\-morphism.

For a local homeo\-morphism we can choose $A$ to be the set of
asymptotic values. When $f$ is a local
homeo\-morphism, all faces of $L_f$ are either digons or $\infty$-gons.
The restriction of $f$ on each digon is a homeo\-morphism, while the restriction
of $f$ on an $\infty$-gon is a universal covering of a face of $C$ minus
the element of $A$ which it contains.

For a local homeo\-morphism $f$, one can describe the cell decomposition
$L_f$ by a simpler
object. If for some pair of vertices there is more than one edge
connecting them, replace all these edges by a single edge. So digons disappear. 
We preserve the labels of the remaining faces and the labels
of vertices. The resulting cell decomposition $T=T_f$
has the following properties:
\begin{itemize}
\item[$(i)$] 
Its $1$-skeleton $T$ is a bipartite graph embedded in $\C$.
\item[$(ii)$] 
The cyclic order of face labels around each $\times$-vertex ($\circ$-vertex)
is consistent with (a restriction of) the
cyclic order of face labels around the $\times$-vertex ($\circ$-vertex) of~$C$. 
\end{itemize}

Given any connected bipartite graph embedded in $\C$ with labeled complementary
components satisfying $(ii)$, one can recover the whole line complex $L$
in a unique way (up to equivalence).

In the case that $f$ is a local homeo\-morphism, $T_f$ is a tree.
So equivalence classes of local homeo\-morphisms $f$ of class $S$ are encoded
by trees embedded in $\C$ with labeled vertices and
complementary components satisfying~$(ii)$.

\subsection{Symmetric local homeomorphisms} \label{section5}
We recall that a local homeo\-morphism $f\colon \C\to\bC$ is called {\em symmetric}
if $f(\overline{z})=\overline{f(z)}$ for all $z\in\C$. In this case, the set of
asymptotic values is symmetric, that is, we have $\overline{A}=A$. A cell decomposition
is called symmetric if the complex conjugation 
$z\mapsto\overline{z}$ maps each cell onto a cell of the same
dimension with complex conjugate label. If $f$ and the cell
decomposition $C$ are symmetric, then the line complex $L_f$ and
the tree $T_f$ are symmetric. Conversely, to each symmetric $C$ and $L$ (or $T$)
corresponds a symmetric local homeo\-morphism~$f$.

The complex conjugation acts on the faces of a symmetric cell decomposition.
So each face of a symmetric cell decomposition is either symmetric
or disjoint from the real line.

Unfortunately, for a symmetric set~$A$, a symmetric cell decomposition $C$
may not exist. (It exists if and only if $A$ contains at most $2$ real points.)
There are several methods of dealing with this difficulty in the study of
symmetric functions of class~$S$; see, for example, \cite{EM,EGS}.
In this paper we will use a somewhat different approach, by using symmetric sub-decompositions
of a generally non-symmetric~$C$. 

\subsection{Quasiconformal mappings} \label{qc-maps}
While the first part of the proof of Theorem~\ref{theorem1} given in section~\ref{section8}
will only deal with topologically holomorphic mappings,
the second part given in section~\ref{section9}
will also use quasiregular and quasiconformal mappings.
For the definition and general properties of quasiconformal mappings we 
refer to~\cite{Be,LV}.
We note that quasiregular mappings are called quasiconformal \emph{functions} in~\cite{LV}.

For a region $D$ and a quasiregular map $f\colon D\to\bC$ we use the notation
\[
\mu_f(z)=\frac{f_{\overline{z}}(z)}{f_z(z)},
\quad
K_f(z)=\frac{1+|\mu_f(z)|}{1-|\mu_f(z)|}
\quad\text{and}\quad
K(f)=\sup_{z\in D}|K_f(z)| .
\]
The following result is a consequence of the existence theorem for a quasiconformal mappings
with prescribed dilatation~\cite[\S~V.1]{LV}.
\begin{lemma}\label{lemma-qr}
Let $f\colon\C\to\bC$ be quasiregular. Then there exists a quasiconformal map
$\phi\colon\C\to\C$ such that $f\circ\phi$ is meromorphic.
\end{lemma}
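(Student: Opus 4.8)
The plan is the standard ``straightening'' argument: pull back, via a quasiconformal change of variable, the measurable conformal structure on the domain that $f$ inherits from the sphere, so that in the new coordinate $f$ becomes holomorphic.

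First I would record the Beltrami coefficient of $f$. Since $f$ is quasiregular we have $K(f)<\infty$, so the function $\mu:=\mu_f$, defined almost everywhere on $\C$, satisfies $\|\mu\|_\infty=(K(f)-1)/(K(f)+1)<1$; near a point where $f$ takes the value $\infty$ one reads off $\mu$ after postcomposing locally with $w\mapsto 1/w$, which does not change it, so $\mu$ is genuinely a well-defined measurable $(-1,1)$-valued function on all of $\C$. By the existence theorem for quasiconformal mappings with prescribed dilatation \cite[\S~V.1]{LV}, there is a quasiconformal homeomorphism $\psi\colon\C\to\C$ with $\mu_\psi=\mu$ almost everywhere. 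Its inverse $\phi:=\psi^{-1}\colon\C\to\C$ is again quasiconformal.

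Next I would check that $g:=f\circ\phi$ is meromorphic. It is quasiregular, being the composition of a quasiregular map with a quasiconformal homeomorphism. To compute $\mu_g$, write $f=g\circ\psi$ and apply the transformation rule for Beltrami coefficients under composition (see \cite[\S~I.4]{LV}): with $r_\psi=\overline{\psi_z}/\psi_z$, which has modulus $1$ almost everywhere,
\[
\mu_f=\frac{\mu_\psi+r_\psi\,(\mu_g\circ\psi)}{1+\overline{\mu_\psi}\,r_\psi\,(\mu_g\circ\psi)}\qquad\text{a.e.}
\]
Since $\mu_f=\mu_\psi$, clearing denominators gives $(|\mu_\psi|^2-1)\,r_\psi\,(\mu_g\circ\psi)=0$ almost everywhere; as $|\mu_\psi|<1$ and $r_\psi\neq 0$ a.e., we conclude $\mu_g\circ\psi=0$ a.e., and hence $\mu_g=0$ a.e.\ on $\C$ because the quasiconformal map $\psi$ carries sets of measure zero to sets of measure zero.

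Finally, a quasiregular map $g\colon\C\to\bC$ with $\mu_g=0$ a.e.\ is holomorphic into the sphere, i.e.\ meromorphic: away from $g^{-1}(\infty)$ the equation $g_{\overline{z}}=0$ holds in the distributional sense, so Weyl's lemma forces $g$ to be holomorphic there, and near a point of $g^{-1}(\infty)$ the same argument applied to $1/g$ shows that $g$ has at worst a pole. Thus $g=f\circ\phi$ is meromorphic, which is the assertion. The only point requiring a little care is the bookkeeping near the poles of $f$ --- that the Beltrami coefficient, and the conclusion, are genuinely local and unaffected by postcomposition with a Möbius transformation; beyond this there is no real obstacle, the statement being essentially a reformulation of the measurable Riemann mapping theorem together with Stoïlow-type regularity.
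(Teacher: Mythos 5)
Your proof is correct and is exactly the standard derivation the paper has in mind: the paper states Lemma~\ref{lemma-qr} without proof, as a consequence of the existence theorem for quasiconformal maps with prescribed dilatation \cite[\S~V.1]{LV}, and your argument (solve the Beltrami equation for $\mu_f$, set $\phi=\psi^{-1}$, and use the composition formula plus Weyl's lemma/Sto\"ilow regularity to see that $f\circ\phi$ is meromorphic) is precisely how that consequence is obtained. No gaps; the bookkeeping near poles via postcomposition with $w\mapsto 1/w$ is handled correctly.
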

The next result is known as the Teich\-m\"uller-Wit\-tich-Belinskii theorem \cite[\S~V.6]{LV}.
\begin{lemma}\label{lemma-twb}
Let $U$ and $V$ be neighborhoods of $\infty$ and let $\phi\colon U\to V$ be 
a quasiconformal map with $\phi(\infty)=\infty$. Suppose that 
\begin{equation}\label{5a}
\int_U \frac{K_\phi(z)-1}{x^2+y^2} \dif x\,\dif y < \infty.
\end{equation}
Then there exists $c\in\C^*$ such that 
\begin{equation}\label{5b}
\phi(z)\sim c z 
\quad\text{as}\ z\to\infty .
\end{equation}
\end{lemma}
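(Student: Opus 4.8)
The plan is to pass to logarithmic coordinates and then run the classical length--area (module) argument. \emph{Reduction.} Shrinking $U$ we may assume $U=\{|z|>M\}$ and $0\notin V$. Writing $z=e^w$ one lifts $\phi$ to a quasiconformal map $\Phi$ of the half-plane $\{\re w>\log M\}$, uniquely determined by $\phi(e^w)=e^{\Phi(w)}$; it commutes with the translation $w\mapsto w+2\pi i$ (because $\phi$ is an orientation-preserving homeomorphism fixing $\infty$), it has the same dilatation quotient as $\phi$ at corresponding points (since $\exp$ is conformal), and, because $\dif x\,\dif y=|z|^2\,\dif u\,\dif v$ for $z=e^w$ with $w=u+iv$, the hypothesis \eqref{5a} becomes
\[
\int_{\Sigma}\bigl(K_\Phi(w)-1\bigr)\,\dif u\,\dif v<\infty,
\qquad \Sigma:=\{\re w>\log M,\ 0\le\im w<2\pi\}.
\]
Since $\phi(z)=e^{\Phi(w)}$, it now suffices to prove that $\Phi(w)-w$ tends to a finite limit $\gamma$ as $\re w\to+\infty$; for then $\phi(z)\sim e^{\gamma}z$, which is~\eqref{5b} with $c=e^\gamma$.

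To establish this I would control conformal moduli of image rings and quadrilaterals. For $\log M<s<t$ let $B(s,t)$ be the flat cylinder $\{s<\re w<t\}$ with $\im w=0$ and $\im w=2\pi$ identified, a ring of modulus $(t-s)/2\pi$; its $\Phi$-image is again a topological ring. By the length--area method (the module inequalities for quasiconformal maps underlying the proof of \cite[\S~V.6]{LV}), the modulus of $\Phi(B(s,t))$ differs from $(t-s)/2\pi$ by at most a constant times $\int_{B(s,t)}(K_\Phi-1)\,\dif u\,\dif v$. Since the integral over all of $\Sigma$ is finite, its tails $\int_{\{\re w>s\}}(K_\Phi-1)$ tend to $0$ as $s\to+\infty$; combining this with the additivity-up-to-error of the modulus under stacking of rings shows that $\mathrm{mod}\,\Phi(B(\log M,t))-t/2\pi$ is Cauchy and hence converges as $t\to+\infty$. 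The same estimate, applied to the sub-rectangles $\{s<\re w<t,\ \alpha<\im w<\beta\}$, controls the moduli of the corresponding image quadrilaterals in the same way.

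Finally I would upgrade these averaged statements to the pointwise asymptotics $\Phi(w)=w+\gamma+o(1)$. The convergence of $\mathrm{mod}\,\Phi(B(\log M,t))$, together with the quantitative stability of the extremal configuration (a ring whose modulus is close to that of a round cylinder of a given circumference is, after the obvious normalization, uniformly close to that cylinder), locates the image curves $\Phi(\{\re w=t\})$ up to an error tending to $0$, giving $\re\bigl(\Phi(w)-w\bigr)\to\mathrm{const}$; the companion statement for the quadrilaterals pins down $\im\bigl(\Phi(w)-w\bigr)$. I expect the main obstacle to be precisely this last step: turning convergence of the one-parameter family of moduli into genuine pointwise convergence requires quantitative stability estimates for nearly extremal rings and quadrilaterals and, crucially, a telescoping argument over successive shells in which the accumulated distortions are summable rather than merely individually small --- which is exactly where the integrability of $K_\phi-1$ against $(x^2+y^2)^{-1}\,\dif x\,\dif y$, rather than the weaker condition $K_\phi(z)\to1$, is used.
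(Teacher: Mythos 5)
The paper does not prove this lemma at all: it is the Teichm\"uller--Wittich--Belinskii theorem, quoted with a reference to \cite[\S~V.6]{LV}, so the only fair benchmark is the proof given there (or in \cite{Be}). Your opening moves reproduce the standard beginning of that proof correctly: passing to logarithmic coordinates, noting that the lift $\Phi$ commutes with $w\mapsto w+2\pi i$ and has the same pointwise dilatation, converting~\eqref{5a} into integrability of $K_\Phi-1$ over a half-strip, and the two-sided length--area estimate comparing $\mathrm{mod}\,\Phi(B(s,t))$ with $(t-s)/2\pi$ up to the dilatation integral. The gap is everything after that, and it is precisely the content of the theorem. Convergence of $\mathrm{mod}\,\Phi(B(\log M,t))-t/2\pi$ is an averaged statement, one number per value of $t$; it does not control $\Phi(w)-w$ at individual points. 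The bridge you propose --- ``a ring whose modulus is close to that of a round cylinder is, after normalization, uniformly close to that cylinder'' --- is false as stated: pushing a thin finger of fixed depth but vanishing width into one boundary component changes the modulus arbitrarily little while keeping that boundary curve a definite distance from any round circle. So modulus convergence alone cannot ``locate the image curves $\Phi(\{\re w=t\})$'', and likewise the moduli of the image quadrilaterals do not pin down $\im(\Phi(w)-w)$; this angular statement is exactly Belinskii's refinement, which is the hardest part of the theorem and is known to require more than the ring estimates that give the Teichm\"uller--Wittich radial statement. You flag this step yourself as ``the expected main obstacle'', which is an accurate self-assessment: it is left entirely unproved.

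What the actual proof does at this point is local and pointwise: one surrounds points (or pairs of nearby points) on far-out circles by small annuli and quadrilaterals, estimates the distortion of each through module inequalities with an error bounded by the dilatation integral over the corresponding shell, and then telescopes, using that these errors are summable --- this is where the weight $(x^2+y^2)^{-1}$ in~\eqref{5a}, rather than mere smallness of $K_\phi-1$, is indispensable. That machinery first yields $\log\lvert\phi(z)\rvert-\log\lvert z\rvert\to\mathrm{const}$ together with vanishing oscillation on circles, and only then, with substantially more work, $\arg\phi(z)-\arg z\to\mathrm{const}$. As written, your text is a correct reduction plus a plan whose decisive step is missing, so it does not establish the lemma; for the purposes of this paper the appropriate course is the one the authors take, namely to cite \cite[\S~V.6]{LV}.
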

The \emph{logarithmic area} of a measurable subset $A$ of $\C$ is defined by
\begin{equation}\label{5c}
\logarea A =\int_A \frac{\dif x\,\dif y}{|z|^2} .
\end{equation}
Let $\phi\colon U\to V$ be as in Lemma~\ref{lemma-twb}. Then
\begin{equation}\label{5d}
\int_U \frac{K_\phi(z)-1}{x^2+y^2} \dif x\,\dif y \leq (K(\phi)-1)\logarea(\supp(\mu_f)). 
\end{equation}
We conclude that if
\begin{equation}\label{5e}
\logarea(\supp(\mu_f)) < \infty,
\end{equation}
then~\eqref{5a} and hence~\eqref{5b} hold.

The next lemma is easily proved by direct computation.
\begin{lemma}\label{lemma-Aalpha}
Let $A\subset\C$ be measurable and $\alpha>0$.
Suppose that some branch of $z\mapsto z^\alpha$ is injective on $A$ and 
let $A^\alpha$ be the image of $A$ under this branch. Then
\begin{equation}\label{5f}
\logarea(A^\alpha) = \alpha^2 \logarea(A) .
\end{equation}
\end{lemma}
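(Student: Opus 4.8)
The plan is to read off the identity directly from the change-of-variables formula for the holomorphic map $h(z)=z^{\alpha}$, using that the real Jacobian determinant of a holomorphic map $h$ equals $|h'|^{2}$. Fix the branch of $z\mapsto z^{\alpha}$ in the statement; it has the form $h(z)=\exp(\alpha L(z))$ for a branch $L$ of the logarithm, and is therefore holomorphic with non-vanishing derivative on a slit plane that contains $A$ up to a set of two-dimensional Lebesgue measure zero. Since $h$ is injective on $A$ by hypothesis, $A^{\alpha}=h(A)$ and the change-of-variables theorem applies to the restriction $h|_{A}$.

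Concretely, writing $\dif m$ for planar Lebesgue measure and recalling $h'(z)=\alpha z^{\alpha-1}$, so that $|h'(z)|^{2}=\alpha^{2}|z|^{2\alpha-2}$ and $|h(z)|^{2}=|z|^{2\alpha}$, I would compute
\[
\logarea(A^{\alpha})=\int_{h(A)}\frac{\dif m(w)}{|w|^{2}}
=\int_{A}\frac{|h'(z)|^{2}}{|h(z)|^{2}}\,\dif m(z)
=\int_{A}\frac{\alpha^{2}|z|^{2\alpha-2}}{|z|^{2\alpha}}\,\dif m(z)
=\alpha^{2}\int_{A}\frac{\dif m(z)}{|z|^{2}}
=\alpha^{2}\logarea(A),
\]
which is the assertion. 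The point $0$ and the ray omitted in defining the branch are Lebesgue-null, hence contribute nothing and may be discarded at the outset.

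An equivalent route, avoiding the change-of-variables theorem, is to pass to ``logarithmic coordinates'': in terms of $(s,\theta)=(\log|z|,\arg z)$ one has $\dif m(z)/|z|^{2}=\dif s\,\dif\theta$, and in these coordinates $h$ becomes the linear map $(s,\theta)\mapsto(\alpha s,\alpha\theta)$, which scales planar Lebesgue measure by the factor $\alpha^{2}$; this gives the claim for any measurable $A$ on which $h$ is injective. There is no genuine obstacle in either approach — the lemma is a one-line computation — and the only point meriting a word is the measurability/change-of-variables bookkeeping for the merely measurable set $A$, handled as above since the chosen branch of $z^{\alpha}$ is a $C^{1}$ diffeomorphism of an open slit plane onto its image.
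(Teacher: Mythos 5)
Your computation is correct and is exactly the ``direct computation'' the paper alludes to: the authors state the lemma without proof, remarking only that it follows by direct computation, and your change-of-variables argument (equivalently, the passage to logarithmic coordinates where $z\mapsto z^{\alpha}$ becomes the linear map scaling area by $\alpha^{2}$) is that computation. Nothing further is needed.
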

In particular, $\logarea(A^\alpha)$ is finite if and only if $\logarea(A)$ is finite.

\section{Construction of examples, and outline of the proof of Theorem~\ref{theorem1}} \label{section6}
In this section, we will prove statement $(v)$ of Theorem~\ref{theorem1}.
Note that we only need to construct examples of local homeo\-morphisms
satisfying the assumptions of the theorem with any $m$ as described
in $(i)-(iii)$.
In order to conclude that all values of $m$ do actually occur
for meromorphic functions, we will need the first part of the theorem
which says that there is a homeo\-morphism $\phi$ such that $F_0=F\circ\phi$
is a meromorphic function. This part will be proved later.

\subsection{Examples without zeros or with one zero} \label{fin-zer}
For case $(i)$ in Theorem~\ref{theorem1}, functions with the required
properties are easily given. Given $m\in\N$, 
\[
g_m(z)=\exp\!\left(\int_0^z e^{\zeta^m}\dif\zeta\right)
\]
is a real local homeo\-morphism with $m$ singularities over $\C^*$
which has no zeros or poles while
\begin{equation}\label{G_m}
\begin{aligned}
h_m(z)
&=z\exp\!\left(\int_0^z \frac{e^{-t^m}-1}{t} \dif t\right)
%\\ &
=\exp\!\left(\int_0^1 \frac{e^{-t^m}-1}{t} \dif t +\int_1^z \frac{e^{-t^m}}{t} \dif t\right)
\end{aligned}
\end{equation}
is a real local homeo\-morphism with $m$ singularities over $\C^*$
and one zero at the origin. The inverses of the functions $g_m$ and $h_m$,
as well as those of all other functions we construct in this section, clearly have 
infinitely many singularities over $0$ and~$\infty$.

\subsection{Infinite one-sided sequence of zeros and poles} \label{inf-one}
The required function with $m\geq 2$ is constructed by gluing two
elements. 
Our first element is $(D_1,f_1)$
where $D_1=\{z\colon\re z>0\}$ is the right half-plane and $f_1(z)=\tan z$.

The second element is very closely related to the function $h_{k}$ 
introduced in~\eqref{G_m} with $k=2(m-1)$. For $m\geq 2$  we put
\begin{equation}\label{5g}
c_{2(m-1)}=\lim_{x\to\infty} h_{2(m-1)}(x)
=\exp\!\left(\int_0^1 \frac{e^{-t^{2(m-1)}}-1}{t}
\dif t +\int_1^\infty \frac{e^{-t^{2(m-1)}}}{t} \dif t\right) 
\end{equation}
and define
\begin{equation}\label{5h}
f_2(z)= \frac{i}{c_{2(m-1)}}h_{2(m-1)}(-iz) .
\end{equation}
Our second element is $(D_2,f_2)$ where $D_2=\{z\colon\re z<0\}$
is the left half-plane.

Both $f_1$ and $f_2$ map the imaginary axis homeo\-morphically
onto the interval $(i,-i)$,
with the same orientation.
Viewed as the boundary of $D_1$ and $D_2$,
the imaginary axis has opposite orientations.
Thus $(D_1,f_1)$ and $(D_2,f_2)$ can be glued. The 
resulting function $f$ is locally univalent and has
$m$ singularities over~$\C^*$.
Moreover, the gluing can be done symmetrically so that 
the resulting function $f$ is also symmetric, with all zeros
and poles on the positive 
axis.
\subsection{Finitely many zeros and poles} \label{fin-many}
As in~\S~\ref{inf-one}, we consider the function $f_1(z)=\tan z$, define 
$f_2$ by~\eqref{5h} and put $D_2=\{z\colon \re z<0\}$.
However, this time we put 
$D_1=\{z\colon 0<\re z<n\pi\}$,
with $n\in\N$. Finally, we put $D_3=\{z\colon \re z>n\pi\}$,
$f_3(z)=f_2(z-n\pi)$.
Then we can glue $(D_2,f_2)$ and $(D_1,f_1)$ along the imaginary axis and
$(D_3,f_3)$ and $(D_1,f_1)$ along the line $\{z\colon \re z=n\pi\}$.

The resulting function has $f$ has $n$ poles and $n+1$ zeros.
Of course, $1/f$ has $n+1$ poles and $n$ zeros.

In order to construct an example with the same number $n$ of zeros and poles,
one takes $D_1=\{z\colon 0<\re z<(n-1/2)\pi\}$ and glues $1/f_2$ to it 
along the line $\{z\colon \re z=(n-1/2)\pi\}$.

The number of singularities over $\C^*$ in such examples is even.
To obtain an odd number greater than $1$,
we can use functions $f_1$ and $f_3$ with
different values of $m$. It seems that there are no examples
with $m=1$ and a finite number larger than $1$ of zeros and poles.

\subsection{Infinite two-sided sequence of zeros and poles} \label{inf-two}
These examples use the line complex and the associated tree,
as described in section~\ref{section3}.
We will have four asymptotic values in $\C^*$, say $\{ i,-i,2i,-2i\}$.

We consider a symmetric cell decomposition $C$ of $\bC$ with
two vertices $\times$ and $\circ$ on the real line, such that each face contains
exactly one asymptotic value.
Then we consider the tree $T$ embedded in $\C$ as shown in Figure~\ref{fig:tree4}
for $m=4$.
This tree satisfies all conditions stated in \S~\ref{section4}, for the cell decomposition shown in Figure~\ref{fig:treecell}, 
so it defines a local homeo\-morphism  $F$. Observe that every non-real vertex is adjacent to logarithmic tracts both over $0$ and over $\infty$,
which implies that all of the zeros and poles of $F$ are real; hence $F$ satisfies the hypotheses of Theorem~\ref{theorem1}~$(iii)$, for $m=4$.

\begin{figure}[!ht]
\captionsetup{width=.9\textwidth}
\def\svgwidth{\textwidth}
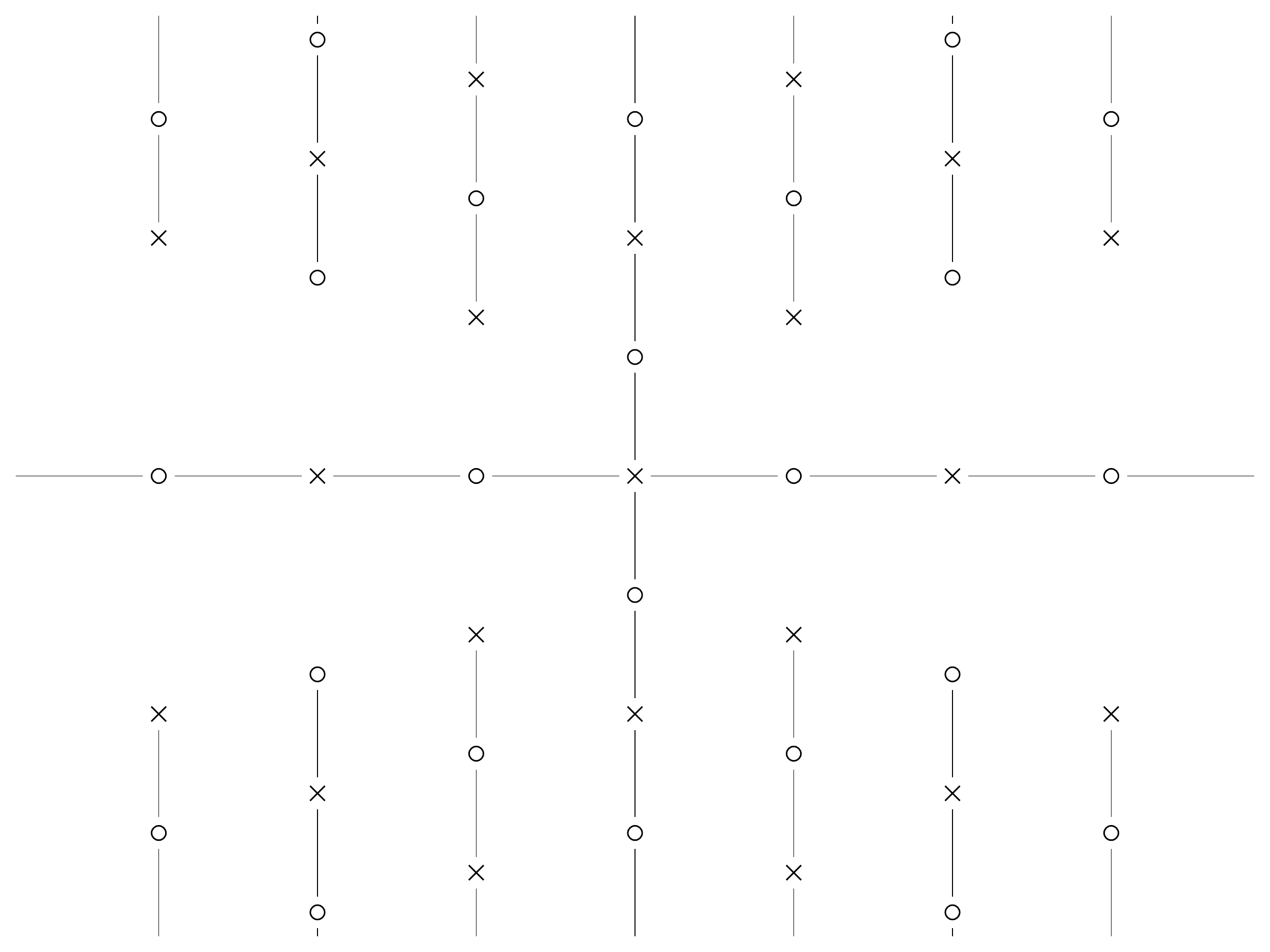
\caption{The tree $T$ for $m=4$.\label{fig:tree4}}
\end{figure}

\begin{figure}[!ht]
\captionsetup{width=.9\textwidth}
\def\svgwidth{\textwidth}
%% Creator: Inkscape 1.1.1 (3bf5ae0d25, 2021-09-20), www.inkscape.org
%% PDF/EPS/PS + LaTeX output extension by Johan Engelen, 2010
%% Accompanies image file '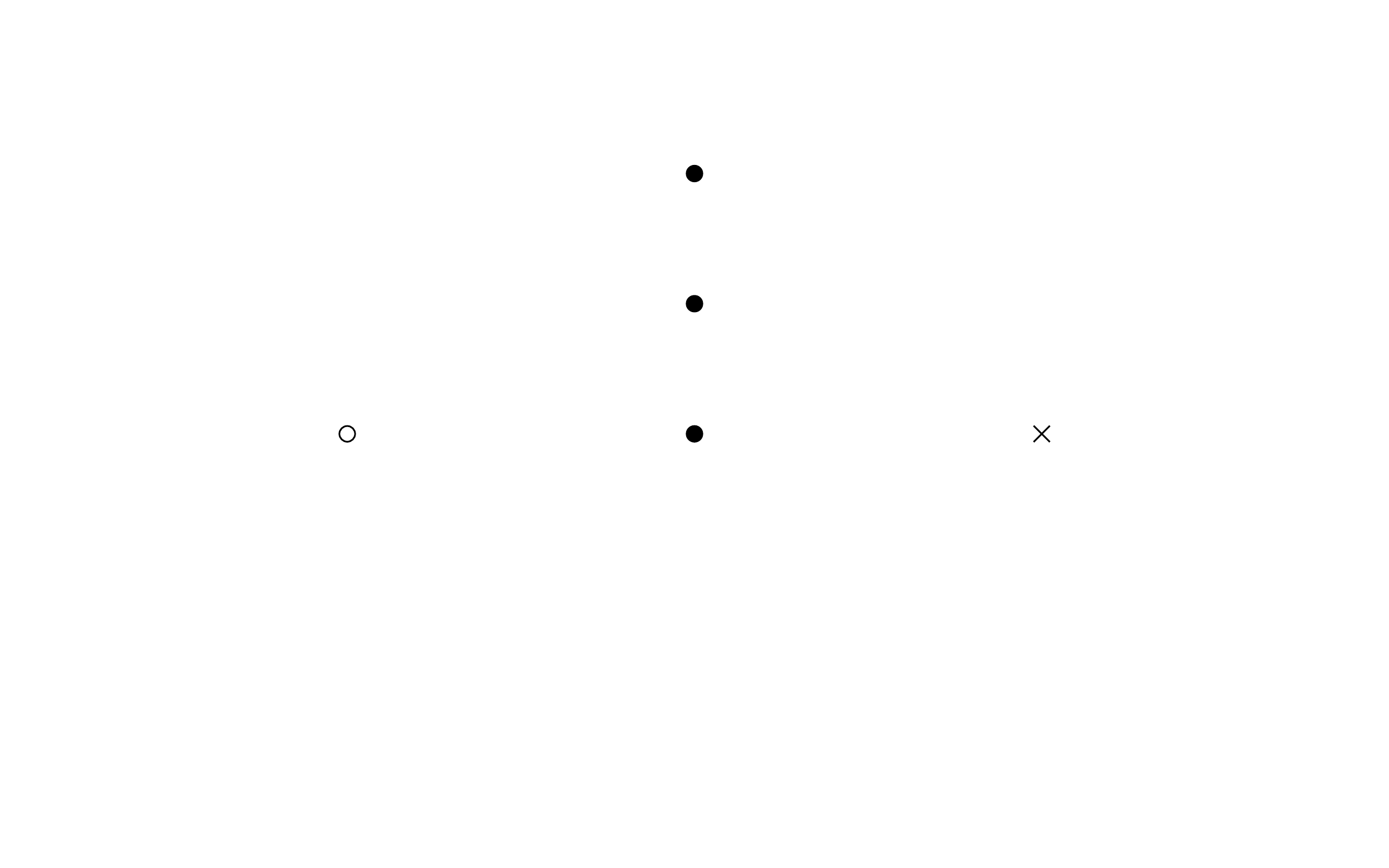' (pdf, eps, ps)
%%
%% To include the image in your LaTeX document, write
%%   \input{<filename>.pdf_tex}
%%  instead of
%%   \includegraphics{<filename>.pdf}
%% To scale the image, write
%%   \def\svgwidth{<desired width>}
%%   \input{<filename>.pdf_tex}
%%  instead of
%%   \includegraphics[width=<desired width>]{<filename>.pdf}
%%
%% Images with a different path to the parent latex file can
%% be accessed with the `import' package (which may need to be
%% installed) using
%%   \usepackage{import}
%% in the preamble, and then including the image with
%%   \import{<path to file>}{<filename>.pdf_tex}
%% Alternatively, one can specify
%%   \graphicspath{{<path to file>/}}
%% 
%% For more information, please see info/svg-inkscape on CTAN:
%%   http://tug.ctan.org/tex-archive/info/svg-inkscape
%%
\begingroup%
  \makeatletter%
  \providecommand\color[2][]{%
    \errmessage{(Inkscape) Color is used for the text in Inkscape, but the package 'color.sty' is not loaded}%
    \renewcommand\color[2][]{}%
  }%
  \providecommand\transparent[1]{%
    \errmessage{(Inkscape) Transparency is used (non-zero) for the text in Inkscape, but the package 'transparent.sty' is not loaded}%
    \renewcommand\transparent[1]{}%
  }%
  \providecommand\rotatebox[2]{#2}%
  \newcommand*\fsize{\dimexpr\f@size pt\relax}%
  \newcommand*\lineheight[1]{\fontsize{\fsize}{#1\fsize}\selectfont}%
  \ifx\svgwidth\undefined%
    \setlength{\unitlength}{1200bp}%
    \ifx\svgscale\undefined%
      \relax%
    \else%
      \setlength{\unitlength}{\unitlength * \real{\svgscale}}%
    \fi%
  \else%
    \setlength{\unitlength}{\svgwidth}%
  \fi%
  \global\let\svgwidth\undefined%
  \global\let\svgscale\undefined%
  \makeatother%
  \begin{picture}(1,0.625)%
    \lineheight{1}%
    \setlength\tabcolsep{0pt}%
    \put(0,0){\includegraphics[width=\unitlength,page=1]{fig5.pdf}}%
    \put(0.49299316,0.27980471){\color[rgb]{0,0,0}\makebox(0,0)[lt]{\lineheight{1.25}\smash{\begin{tabular}[t]{l}$0$\end{tabular}}}}%
    \put(0.49299316,0.37355469){\color[rgb]{0,0,0}\makebox(0,0)[lt]{\lineheight{1.25}\smash{\begin{tabular}[t]{l}$i$\end{tabular}}}}%
    \put(0.48674317,0.46730469){\color[rgb]{0,0,0}\makebox(0,0)[lt]{\lineheight{1.25}\smash{\begin{tabular}[t]{l}$2i$\end{tabular}}}}%
    \put(0,0){\includegraphics[width=\unitlength,page=2]{fig5.pdf}}%
    \put(0.48674317,0.18605471){\color[rgb]{0,0,0}\makebox(0,0)[lt]{\lineheight{1.25}\smash{\begin{tabular}[t]{l}$-i$\end{tabular}}}}%
    \put(0.48049315,0.09230467){\color[rgb]{0,0,0}\makebox(0,0)[lt]{\lineheight{1.25}\smash{\begin{tabular}[t]{l}$-2i$\end{tabular}}}}%
  \end{picture}%
\endgroup%

\caption{The cell decomposition for the trees in Figures~\ref{fig:tree4} and~\ref{fig:tree6}.\label{fig:treecell}}
\end{figure}

We may insert additional logarithmic tracts by splitting the tree at any vertex that is adjacent only to  a face labeled $0$ and a face labeled $\infty$ (and symmetrically, at its complex
  conjugate), inserting additional branches to ensure that every non-real vertex is again adjacent to a face labeled $0$ and a face labeled $\infty$.  See Figure~\ref{fig:tree6}. Repeating
  this procedure, we obtain local homeo\-morphisms with the desired properties for every even $m\geq 4$, as claimed. 

\begin{figure}[!ht]
\captionsetup{width=.9\textwidth}
\def\svgwidth{\textwidth}
%% Creator: Inkscape 1.1.1 (3bf5ae0d25, 2021-09-20), www.inkscape.org
%% PDF/EPS/PS + LaTeX output extension by Johan Engelen, 2010
%% Accompanies image file '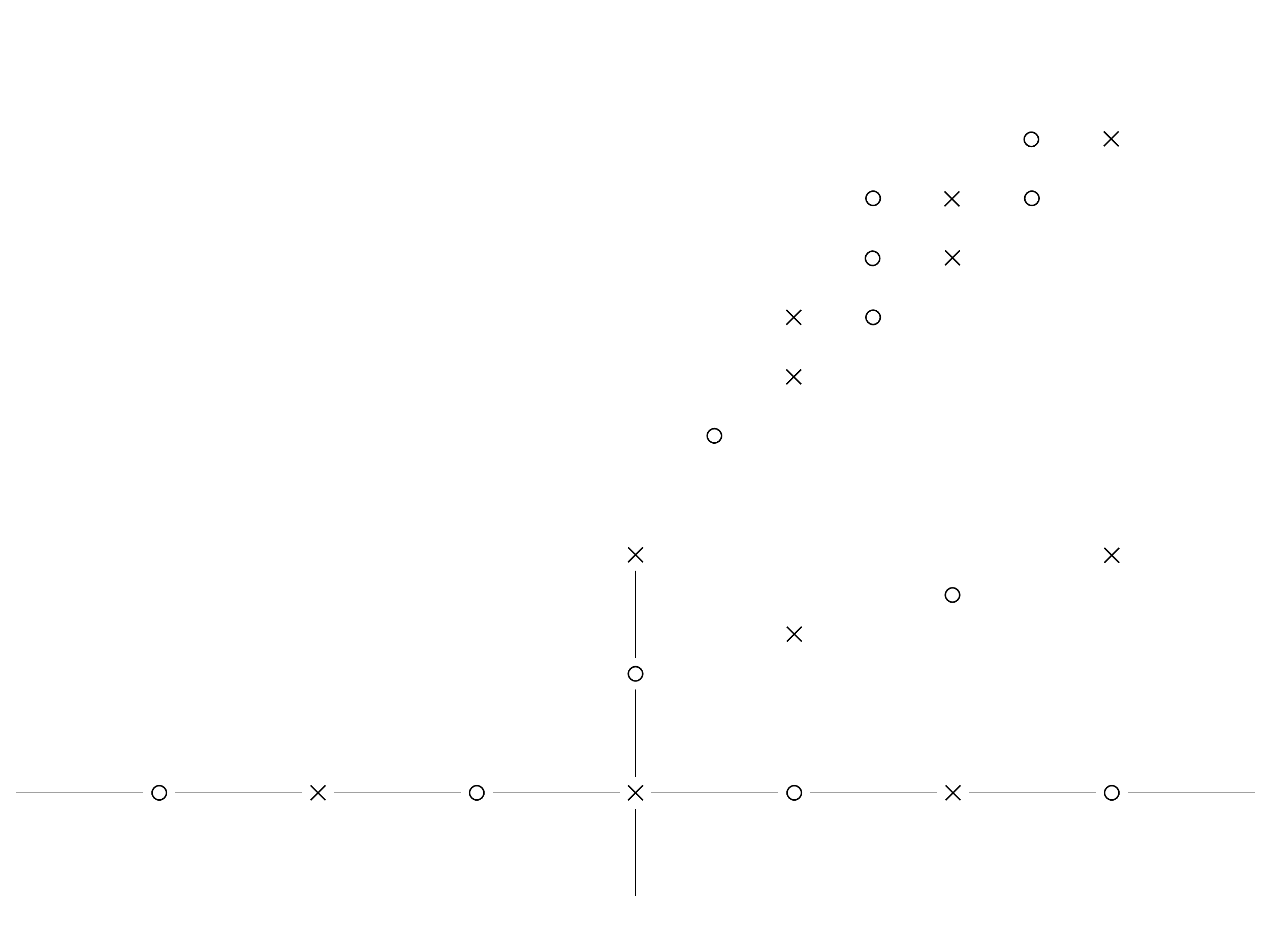' (pdf, eps, ps)
%%
%% To include the image in your LaTeX document, write
%%   \input{<filename>.pdf_tex}
%%  instead of
%%   \includegraphics{<filename>.pdf}
%% To scale the image, write
%%   \def\svgwidth{<desired width>}
%%   \input{<filename>.pdf_tex}
%%  instead of
%%   \includegraphics[width=<desired width>]{<filename>.pdf}
%%
%% Images with a different path to the parent latex file can
%% be accessed with the `import' package (which may need to be
%% installed) using
%%   \usepackage{import}
%% in the preamble, and then including the image with
%%   \import{<path to file>}{<filename>.pdf_tex}
%% Alternatively, one can specify
%%   \graphicspath{{<path to file>/}}
%% 
%% For more information, please see info/svg-inkscape on CTAN:
%%   http://tug.ctan.org/tex-archive/info/svg-inkscape
%%
\begingroup%
  \makeatletter%
  \providecommand\color[2][]{%
    \errmessage{(Inkscape) Color is used for the text in Inkscape, but the package 'color.sty' is not loaded}%
    \renewcommand\color[2][]{}%
  }%
  \providecommand\transparent[1]{%
    \errmessage{(Inkscape) Transparency is used (non-zero) for the text in Inkscape, but the package 'transparent.sty' is not loaded}%
    \renewcommand\transparent[1]{}%
  }%
  \providecommand\rotatebox[2]{#2}%
  \newcommand*\fsize{\dimexpr\f@size pt\relax}%
  \newcommand*\lineheight[1]{\fontsize{\fsize}{#1\fsize}\selectfont}%
  \ifx\svgwidth\undefined%
    \setlength{\unitlength}{1200bp}%
    \ifx\svgscale\undefined%
      \relax%
    \else%
      \setlength{\unitlength}{\unitlength * \real{\svgscale}}%
    \fi%
  \else%
    \setlength{\unitlength}{\svgwidth}%
  \fi%
  \global\let\svgwidth\undefined%
  \global\let\svgscale\undefined%
  \makeatother%
  \begin{picture}(1,0.75)%
    \lineheight{1}%
    \setlength\tabcolsep{0pt}%
    \put(0,0){\includegraphics[width=\unitlength,page=1]{fig6.pdf}}%
    \put(0.68714954,0.19584911){\color[rgb]{0,0,0}\makebox(0,0)[lt]{\lineheight{1.25}\smash{\begin{tabular}[t]{l}$2i$\end{tabular}}}}%
    \put(0.31214951,0.19584911){\color[rgb]{0,0,0}\makebox(0,0)[lt]{\lineheight{1.25}\smash{\begin{tabular}[t]{l}$i$\end{tabular}}}}%
    \put(0.48674317,0.46105469){\color[rgb]{0,0,0}\makebox(0,0)[lt]{\lineheight{1.25}\smash{\begin{tabular}[t]{l}$i$\end{tabular}}}}%
    \put(0,0){\includegraphics[width=\unitlength,page=2]{fig6.pdf}}%
    \put(0.56174317,0.30480469){\color[rgb]{0,0,0}\makebox(0,0)[lt]{\lineheight{1.25}\smash{\begin{tabular}[t]{l}$\infty$\end{tabular}}}}%
    \put(0.43674316,0.30480469){\color[rgb]{0,0,0}\makebox(0,0)[lt]{\lineheight{1.25}\smash{\begin{tabular}[t]{l}$0$\end{tabular}}}}%
  \end{picture}%
\endgroup%

\caption{The tree for $m=6$. (Since the tree is symmetric, only its upper part is shown. Unlabeled faces are asymptotic tracts over $0$ or~$\infty$.)\label{fig:tree6}}
\end{figure}

\subsection{Outline of the proof of Theorem~\ref{theorem1}} \label{outline}
We will show that all functions $F$
satisfying the conditions of Theorem~\ref{theorem1} are similar to these examples.
To do this, we draw $m$ disjoint asymptotic curves $\gamma_j$ corresponding
to all distinct singularities of $F^{-1}$ over~$\C^*$. These curves will
split the plane into $m$ disjoint sector-like regions $G_0,\ldots,G_{m-1}$
plus a compact set.
Assuming that $F$ has infinitely many positive zeros and poles, we enumerate them
 so that the positive zeros and poles are contained in $G_0$ and that the whole partition of
the plane into $G_0,\ldots,G_{m-1}$ is symmetric. So when there are infinitely many
negative zeros and poles, they will lie in $G_{m/2}$.

Then we will show that the restrictions of our function $F$ onto
$G_j$ are of two special types
described in the next section. One type which we call $\Tan_a$ is symmetric, has
infinitely many zeros and poles,
but $0$ and $\infty$ are not asymptotic values for this element.
This element is similar to the element $(D_1,f_1)$ considered in~\S~\ref{inf-one}.
The second type $\Broom_{d_1,d_2}$  has no zeros and poles,
but infinitely many singularities over $0$ and $\infty$.
It is similar to the element $(D_2,f_2)$ of \S~\ref{inf-one}, with $m=2$.

In the next section we construct explicit
quasiregular representatives of these classes of elements,
and show that their quasiconformal dilatation is supported on a small set
(of finite logarithmic area). This will allow us to paste them together
to reconstruct our function~$F$. All asymptotic properties
will follow from the Teich\-m\"uller--Wit\-tich--Belinskii theorem 
(Lemma~\ref{lemma-twb}).

\section{Elements of special type} \label{section7}
As explained in~\S~\ref{outline}, we will divide the plane into
certain regions $G_j$ such that the restriction of $F$ to $G_j$ is equivalent to one
of two special types of elements.  
These elements $(D,f)$ have the following properties:
The domain  $D$ is bounded by a curve $\gamma\colon\R\to\C$ such
that $|\gamma(t)|\to\infty$ as $t\to\pm\infty$. Here $\gamma$
is oriented in the positive direction so that $D$ is on the left of~$\gamma$.
The function $f$ is such that $f(\gamma(t))$ tends
radially to certain asymptotic
values as $t\to\pm\infty$. This means that there exist
$t_\pm\in \R$, $a_\pm\in \C^*$, $\theta_\pm\in (-\pi,\pi]$ and
$\varepsilon>0$ such that
\begin{itemize}
\item[$(*)$]
$f\circ \gamma$ maps
$[t_+,\infty)$ homeo\-morphically onto $(a_+,a_+ +\varepsilon e^{i\theta_+}]$
and $(-\infty,t_-]$ homeo\-morphically onto $(a_-,a_- +\varepsilon e^{i\theta_-}]$.
\end{itemize}
We will only need the cases where 
$\theta\in\{0,\pm\pi/2,\pi\}$ 
so that $f$ approaches the asymptotic value parallel to the real or imaginary axis.

We call the pairs $d_\pm:=(a_\pm,\theta_\pm)$ \emph{oriented asymptotic values}.
If there exist $t_\pm\in\R$ and $\varepsilon>0$ such that $(*)$ holds, then we
say that the element $(D,f)$ has the oriented asymptotic values $d_\pm$ at $\pm\infty$.
We shall assume that if $a_+=a_-$, then $\theta_+=\theta_-$, since
this suffices for our purposes.

One element will essentially be the restriction of the tangent function
to the right half-plane $H:=\{z\colon \re z>0\}$. The boundary curve $\gamma_H$
is  then given by $\gamma_H(t)=-it$. Then~$(*)$ holds with $a_\pm=\mp i$
and $\theta_\pm=\pm \pi/2$.
Thus $(H,\tan)$ has the oriented asymptotic values $(\mp i,\pm \pi/2)$ at $\pm\infty$.
For technical reasons, we will, however, later introduce some modification of the element $(H,\tan)$.
Essentially, we will show that if $F$ has infinitely many zeros and poles in $G_j$, then $(G_j,F)$ is
equivalent to this (modified) element.

But before doing so we will deal with the domains $G_j$ where $F$ has no
zeros and poles.
\subsection{Elements without zeros and poles}\label{elem-nozeros}
The domains where $F$ maps to $\C^*$ are covered by the following definition.
\begin{definition}\label{def-broom}
Let $D$ be an unbounded simply-connected domain bounded by a curve
$\gamma\colon\R\to\C$, with $\gamma(t)\to\infty$ as $t\to\pm\infty$,
and let $F\colon D\to \C^*$ be a local homeo\-morphism.
Suppose that if $c\in\C^*$, then there exists $\delta>0$ such that
$F^{-1}(D(c,\delta))$  has no unbounded connected component whose closure is in~$D$.

Let $d_\pm = (a_\pm,\theta_\pm)$,
where $a_\pm\in\C^*$ and $\theta_\pm\in(-\pi,\pi]$,
with $\theta_+=\theta_-$ if $a_+=a_-$.
If $(D,F)$ has the oriented asymptotic values $d_\pm$ at $\pm\infty$,
then $(D,F)$ is called of type $\Broom_{d_+,d_-}$.
\end{definition}

We will show that these elements can be represented in a particular form,
similar to the ones considered in~\S~\ref{fin-zer}.
We begin with the case where $a_\pm>0$ and $\theta_\pm=0$.
Here and in the following we put $H^+=\{z\colon \im z>0\}$.
\begin{proposition}\label{lemma-broom1}
Let $a_\pm>0$ and put $d_\pm =(a_\pm,0)$.
Let $(D,F)$ be of type $\Broom_{d_+,d_-}$.
Then there exist compact sets $K$ and $K_0$,
a symmetric rational function $R_0$ with at most one pole and $\xi,c_0 \in\R$
such that with
\begin{equation}\label{10a}
F_0(z)=\exp\!\left( \int_{\xi}^z R_0(t)e^{- t^2}\dif t +c_0 \right)
\end{equation}
we have $(D\setminus K, F)\sim (H^+\setminus K_0, F_0)$.
\end{proposition}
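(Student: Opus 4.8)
The plan is to pin down the combinatorial type of the element $(D,F)$ in a neighbourhood of $\infty$ by means of its line complex, to verify that the explicit function $F_0$ in~\eqref{10a} realizes exactly that type, and then to invoke Proposition~\ref{proposition2} to obtain the required equivalence on complements of compacta.

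First I would pass to a logarithmic lift. Since $D$ is simply connected and $F\colon D\to\C^*$ omits $0$ and $\infty$, we may write $F=\exp\circ\,g$ for a local homeomorphism $g\colon D\to\C$; the oriented asymptotic values $d_\pm=(a_\pm,0)$ with $a_\pm>0$ become real oriented asymptotic values $(\log a_\pm,0)$ of $g$ at $\pm\infty$. As $F$ is a local homeomorphism we have $g'=F'/F\neq0$, so $g$ has no critical points, and the hypothesis in Definition~\ref{def-broom} --- that no unbounded component of $F^{-1}(D(c,\delta))$ has its closure inside $D$ --- translates, via the exponential, into the statement that $g$ has no transcendental singularity over a finite value whose tract stays away from $\partial D$. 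Hence the singular structure of $g$ visible from a neighbourhood of $\infty$ in $D$ consists only of the two boundary asymptotic values $\log a_\pm$ at the ends of $\gamma$ and of the asymptotic value $\infty$ in the interior; and since $\re g\to+\infty$ corresponds to $F\to\infty$ and $\re g\to-\infty$ to $F\to 0$, these interior tracts over $\infty$ are precisely the logarithmic tracts of $F^{-1}$ over $0$ and $\infty$.

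The heart of the proof is then a combinatorial normal form. I claim there is a closed topological disk $K\subset D$ (so that $D\setminus K$ remains simply connected) such that, for a fixed cell decomposition $C$ of $\bC$ adapted to the singular set $\{0,\infty,a_+,a_-\}$, the line complex $F^{-1}(C)$ on $D\setminus K$ is combinatorially equivalent, labels included, to a standard model: a doubly infinite chain of fundamental half-strips, consecutive strips mapped over punctured neighbourhoods of $0$ and of $\infty$ in alternation, whose two ends run out along $\gamma$ to the boundary tracts over $a_+$ and over $a_-$, with a finite ``centre'' of bounded complexity. Equivalently, the associated tree on $D\setminus K$ is, up to this centre, a bi-infinite ray carrying pendant vertices labelled $0$ and $\infty$ alternately. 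I would prove this by following the single boundary curve $\gamma$ from its $+\infty$ end to its $-\infty$ end and analysing how the components of $F^{-1}$ of small disks around $0$, around $\infty$, and around $a_\pm$ are forced to fit together, using that $\gamma(t)\to\infty$ as $t\to\pm\infty$, that $F$ omits $0$ and $\infty$, and that $F(\gamma(t))$ converges radially; this is where one shows that the fan of tracts over $0$ and $\infty$ is present, is bi-infinite, and cannot escape along $\gamma$. The finitely many possible shapes of the centre turn out to be exactly those realizable by a real polynomial together with at most one simple real pole, and this is what $R_0$ will encode. Extracting this rigid fan picture from the purely topological hypotheses of Definition~\ref{def-broom}, and matching the centre against the admissible forms of $R_0$, is the step I expect to be the main obstacle; everything else is routine.

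Finally I would check that $F_0$ realizes this model. From~\eqref{10a}, $\log F_0(z)=\int_{\xi}^{z}R_0(t)e^{-t^2}\,dt+c_0$, and since the integrand decays super-exponentially along $\R$, $\log F_0$ has finite real limits $c_0+\int_{\xi}^{\pm\infty}R_0(t)e^{-t^2}\,dt$ at the two ends of $\partial H^+$; these can be set equal to $\log a_\pm$ by the choice of the real parameters $\xi$ and $c_0$, which is where the normalization $a_\pm>0$ is used. In the sector around $+i\infty$ the factor $e^{-z^2}$ grows, and integration by parts shows that $\re\log F_0$ attains arbitrarily large values of both signs while $\im\log F_0$ winds on both sides of the direction $i\infty$; this is precisely the bi-infinite fan of logarithmic tracts of $F_0^{-1}$ over $0$ and $\infty$, and the degree of the polynomial part of $R_0$ together with its one admissible pole are the parameters that match the centre of the fan to the one found above. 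Deleting from $H^+$ a closed topological disk $K_0$ containing the finitely many zeros of $R_0$ in $H^+$ --- the only points where $F_0$ fails to be locally univalent --- the function $F_0$ is a local homeomorphism on $H^+\setminus K_0$ whose line complex is combinatorially equivalent to that of $F$ on $D\setminus K$; Proposition~\ref{proposition2} then furnishes an orientation-preserving homeomorphism $\phi$ with $F=F_0\circ\phi$, that is, $(D\setminus K,F)\sim(H^+\setminus K_0,F_0)$. Since $R_0$, $\xi$ and $c_0$ are real, $F_0$ is automatically symmetric, which is what is needed when this element is later reflected and glued.
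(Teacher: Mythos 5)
Your plan reduces the whole proposition to a single combinatorial claim --- that the hypotheses of Definition~\ref{def-broom} force the line complex of $F$ near infinity to be a bi-infinite alternating fan of tracts over $0$ and $\infty$, flanked by the two boundary tracts and completed by a ``finite centre'' realizable by some admissible $R_0$ --- and you explicitly leave that claim unproved (``the step I expect to be the main obstacle''). That claim \emph{is} the proposition, and the boundary-following argument you sketch cannot deliver it: Definition~\ref{def-broom} only constrains components of $F^{-1}(D(c,\delta))$ for $c\in\C^*$, while the tracts over $0$ and $\infty$ never meet $\gamma$, so nothing visible from the boundary forces them to exist, to be infinite in number, to alternate in a single chain, or to admit a centre of bounded complexity matched by the finitely many parameters in $R_0$, $\xi$, $c_0$. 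For example, a boundary-following argument does not exclude an element in which, between two such fans, $F$ restricted to an unbounded strip is a covering onto a round annulus (preimages of small disks $D(c,\delta)$ then have only bounded components, so the hypothesis of Definition~\ref{def-broom} is not disturbed), nor one in which the approach to $a_\pm$ along the two tails is only algebraically fast; eliminating such configurations is an analytic matter, not something one can read off from the labelled cells adjacent to $\gamma$. (Your strategy is essentially the one the paper uses for the $\Tan_a$ pieces in Theorem~\ref{theorem2}, where the reality of the zeros and poles does force a digon chain; for the Broom pieces the interior tract structure is precisely what is \emph{not} determined by soft arguments.)

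The paper's proof runs along a different, analytic route, and this is where the hypotheses $a_\pm>0$, $\theta_\pm=0$ are really used: they make $F$ real on the two tails of $\partial H^+$, so $F$ extends by Schwarz reflection to $\C\setminus I$ for a compact interval $I$ (in your sketch the positivity of $a_\pm$ is only used to adjust additive constants, which misses this point). The reflected map is then completed, via the Morse--Heins Lemma~\ref{lemma-mh}, to a symmetric topologically holomorphic map of $\C$ with at most one pole, and uniformized to a symmetric meromorphic function $F_0$ agreeing with $F$ up to homeomorphic change of variable near infinity; writing $F_0=Qe^{g}$ and factoring the logarithmic derivative as $L=R_0e^{h}$, the paper shows $\deg h=2$ by combining Huber's Lemma~\ref{lemma-huber} (each suitable component of $\{|L|<e^{-K}\}$ carries an asymptotic path of $F_0$ to a value in $\C^*$, and there are only the two boundary singularities) with Lemma~\ref{lemma-QeP}; an affine change of variable then gives~\eqref{10a}. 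No matching of line complexes or appeal to Proposition~\ref{proposition2} is needed, because $F_0$ is manufactured from $F$ itself. To complete your route you would, besides supplying the missing forcing argument, also have to verify the hypotheses of Proposition~\ref{proposition2} for a suitable cell decomposition on the bordered surfaces $D\setminus K$ and $H^+\setminus K_0$ and prove that every possible centre is realized by some choice of $R_0,\xi,c_0$; both of these are asserted rather than proved in your sketch. As it stands, the proposal restates the result and postpones its proof.
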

To prove Proposition~\ref{lemma-broom1}, we will use the following lemma.
We omit its proof, but note that it can easily 
be deduced from a result of Morse and Heins~\cite[Theorem~20.4]{M}.
\begin{lemma}\label{lemma-mh}
Let $0<t<T$ and let $f\colon \{z\colon |z|>t\}\to\bC$ be a topologically holomorphic
map. Then there exists a topologically holomorphic map $F\colon\C\to\bC$ such that
$F(z)=f(z)$ for $|z|>T$.

The map $F$ can be chosen to have at most one pole, which is located at~$0$.
Moreover, if $f$ is symmetric, then $F$ can be chosen to be symmetric.
\end{lemma}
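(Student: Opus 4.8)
The plan is to keep $f$ on the exterior of a suitable circle and to \emph{cap off} the bounded complementary disk by a topologically holomorphic element glued along its boundary; the existence of such a cap is exactly what the cited result of Morse and Heins provides.

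First I would fix a radius $r\in(t,T)$ such that the circle $\Gamma_r=\{z\colon |z|=r\}$ contains neither a critical point nor a pole of $f$. This is possible because the critical points and the poles of $f$ form a discrete, hence countable, subset of $\{|z|>t\}$, so only countably many radii are excluded. Then $f$ is a local homeomorphism near $\Gamma_r$, and $c:=f|_{\Gamma_r}$ is a locally injective closed curve in $\bC$ (perturbing $\Gamma_r$ slightly, and symmetrically, inside a thin annulus $\{r-\eps<|z|<r+\eps\}\subset(t,T)$, one may in addition assume $c$ is a generic immersion if this is convenient). Since $\{|z|\ge r\}\supseteq\{|z|>T\}$, it suffices to extend the element $(\{|z|\ge r\},f)$ to a topologically holomorphic map on all of $\C$.

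Now I would invoke \cite[Theorem~20.4]{M} to obtain a topologically holomorphic element $(\overline{\D},g)$ with: no critical points of $g$ on $\partial\overline{\D}$; at most one pole of $g$, located at $0$; and an orientation-reversing homeomorphism $\psi\colon\partial\overline{\D}\to\Gamma_r$ with $g=f\circ\psi$ on $\partial\overline{\D}$. The only feature of the pole divisor of $g$ that matters here is its total multiplicity, which by the argument principle equals $\max\{0,-\min_{p\notin c}n(c,p)\}$, where $n(c,p)$ denotes the winding number of $c$ about $p$; as this constraint fixes only the total multiplicity and not the location, the whole pole divisor may be concentrated at the centre of the disk. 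Gluing $(\{|z|\ge r\},f)$ to $(\overline{\D},g)$ along $\psi$ --- arc by arc, using a subdivision of $\Gamma_r$ into subarcs on which $f$ is injective --- produces a topologically holomorphic map by the gluing construction of \S\ref{topo-hol} together with Proposition~\ref{proposition1}; since $f$ is a local homeomorphism near $\Gamma_r$ and $g$ is unramified on $\partial\overline{\D}$, the two local pictures fit together into a local homeomorphism across the gluing curve, so the result is topologically holomorphic there as well. The glued surface is a half-open annulus with a closed disk attached along its boundary circle, hence homeomorphic to $\C$; extending $\psi^{-1}$ to a homeomorphism of $\{|z|\le r\}$ onto $\overline{\D}$ by the Alexander trick and taking the identity on $\{|z|\ge r\}$ identifies the glued surface with $\C$ by a map that is the identity on $\{|z|\ge r\}$. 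This yields the required $F\colon\C\to\bC$ with $F=f$ on $\{|z|\ge r\}\supseteq\{|z|>T\}$, whose only pole --- apart from those of $f$, of which there are none in the applications --- is the one at $0$.

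For the symmetry statement I would run the construction equivariantly with respect to $z\mapsto\overline z$: when $f$ is symmetric, $\Gamma_r$ is symmetric and $c$ satisfies $c(\overline z)=\overline{c(z)}$, so it suffices to build the cap on a fundamental domain, say on the upper half-disk, requiring that it send the diameter into $\R\cup\{\infty\}$ and agree with the upper half of $c$ on the semicircle, and then to reflect; this is again an application of \cite[Theorem~20.4]{M} to symmetric boundary data. I expect the main obstacle to lie precisely in this interface with Morse--Heins: extracting from their extension theorem a cap in the normalised form the gluing needs --- boundary-unramified, with its pole concentrated at a single prescribed point, and (in the symmetric case) compatible with the reflection. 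Once that normalised cap is in hand, the rest is a routine application of the pasting formalism of Section~\ref{section3}.
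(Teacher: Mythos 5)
The paper omits the proof of this lemma, noting only that it follows from \cite[Theorem~20.4]{M}, so there is no internal argument to compare against; your sketch --- cap the disk $\{|z|\le r\}$ along a circle $\Gamma_r$ avoiding critical points and poles by a Morse--Heins element whose pole divisor is concentrated at the origin, then glue via the formalism of \S\ref{topo-hol} and identify the resulting surface with $\C$ by a map equal to the identity on $\{|z|\ge r\}$ --- is precisely the reduction the authors evidently intend, and you have identified the genuine pressure point (extracting a boundary-unramified, pole-normalised, symmetric cap from the Morse--Heins statement) rather than glossing it. You have also correctly noticed that the clause ``at most one pole, located at $0$'' is only literally accurate when $f$ itself has no poles in $\{|z|>T\}$, which is the situation in the lemma's sole application in the proof of Proposition~\ref{lemma-broom1}.
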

The following result is due to Lindel\"of \cite[Chapter~5, Lemma~1.2]{GO}.
\begin{lemma}\label{lemma-lindeloef}
Let $f\colon\{z\colon \re z\geq 0\}\to\C$ be continuous and bounded.
Suppose that $f$ is holomorphic in $\{z\colon \re z> 0\}$ and that there exist
$a_\pm\in\C$ such that $f(it)\to a_\pm$ as $t\to\pm\infty$. Then
$a:=a_+=a_-$ and $f(z)\to a$ as $|z|\to\infty$.
\end{lemma}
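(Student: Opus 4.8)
The plan is to move the statement to a strip by the change of variables $z=e^w$ and then run a Phragm\'en--Lindel\"of / two-constants argument. The map $z=e^w$ carries the strip $S=\{w\colon|\im w|<\pi/2\}$ conformally onto the right half-plane $H=\{z\colon\re z>0\}$, and $\overline S$ onto $\overline H\setminus\{0\}$, so $g(w):=f(e^w)$ is holomorphic and bounded in $S$ (by $M:=\sup_{\overline H}|f|$) and continuous on $\overline S$. Since $e^{u\pm i\pi/2}=\pm i e^u$ and $|e^w|=e^{\re w}$, the assertion of the lemma becomes: if $g(u+i\tfrac\pi2)\to a_+$ and $g(u-i\tfrac\pi2)\to a_-$ as $u\to+\infty$, then $a_+=a_-=:a$ and $g(w)\to a$ as $\re w\to+\infty$, uniformly for $|\im w|\le\pi/2$. (One could equally map $H$ onto the unit disk by a M\"obius transformation sending both $\pm i\infty$ to a single boundary point and argue there.)

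First I would fix $\eps>0$ and choose $U$ so large that $|g(u\pm i\tfrac\pi2)-a_\pm|\le\eps_U$ for all $u\ge U$, where $\eps_U\to0$ as $U\to\infty$. For $V>U$, consider the rectangle $R=(U,V)\times(-\pi/2,\pi/2)$. For any constant $c$ the function $\log|g-c|$ is subharmonic in $R$ and bounded above by $\log(M+|c|)$ on $\partial R$, so the two-constants inequality (the maximum principle applied to $\log|g-c|$) gives $\log|g(w)-c|\le\int_{\partial R}\log|g-c|\,d\omega_w^R$, where $\omega_w^R$ is harmonic measure at $w$. On the two short sides of $R$ I would use only the crude bound $|g-c|\le M+|c|$, but their harmonic measure at a point $w$ with $\re w=x$ is dominated by $C(e^{-(x-U)}+e^{-(V-x)})$ — this is the exponential decay of harmonic measure in a strip of finite width, read off by comparison with the leading harmonic function $e^{-u}\cos v$. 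Letting $V\to\infty$ and using the reflection symmetry $v\mapsto-v$ of the resulting half-strip, for $w=u_0$ on the real axis with $u_0-U$ large one obtains
\[
\log|g(u_0)-a_+|\le c_0\log\eps_U+O(1)\quad\text{and}\quad\log|g(u_0)-a_-|\le c_0\log\eps_U+O(1),
\]
with a fixed $c_0>0$, the $O(1)$ depending only on $M$, $a_\pm$ and $|a_+-a_-|$.

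To conclude, I would fix a large $T$, set $U=u_0-T$, and let $u_0\to+\infty$: then $\eps_U\to0$, so $c_0\log\eps_U\to-\infty$ while the $O(1)$ stays bounded, forcing $g(u_0)\to a_+$ and $g(u_0)\to a_-$; hence $a_+=a_-=:a$. Now I would redo the estimate with $c=a$, in which case \emph{both} horizontal sides of $R$ contribute $\log\eps_U$, giving $\log|g(w)-a|\le(1-Ce^{-(\re w-U)})\log\eps_U+Ce^{-(\re w-U)}\log(M+|a|)$ for $\re w\ge U+1$. Choosing first $T$ with $Ce^{-T}<\tfrac12$ and then $U$ large makes the right-hand side less than $\log\eta$ for every $\re w\ge U+T$, which is exactly the required uniform convergence $g(w)\to a$; transporting back through $z=e^w$ gives $f(z)\to a$ as $|z|\to\infty$.

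The only step needing care is the exponential decay of the harmonic measure of the short sides of $R$, uniformly in $V$: this is classical — it is precisely the decay underlying Phragm\'en--Lindel\"of in a strip of finite width, obtainable either from the eigenfunction expansion of harmonic measure or from an explicit harmonic comparison function — but it must be quoted with enough precision that the exponents in the two-constants inequality combine as above. The rest is routine. Alternatively, and more briefly, once the problem has been transferred to the disk (or strip) it is exactly the classical Lindel\"of theorem on asymptotic values, so one may simply invoke \cite[Chapter~5, Lemma~1.2]{GO}.
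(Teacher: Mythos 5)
The paper does not prove this lemma at all: it states it as a classical result of Lindel\"of and simply cites \cite[Chapter~5, Lemma~1.2]{GO}, which is exactly the reference you suggest in your own final sentence. So "the paper's proof'' is the citation. Your hand-rolled argument is nevertheless correct and self-contained: the change of variables $z=e^w$ carries $\{\re z>0\}$ to a strip of finite width; on a long rectangle $R=(U,V)\times(-\pi/2,\pi/2)$ the two-constants inequality for the subharmonic function $\log|g-c|$, combined with the exponential decay of the harmonic measure of the short sides (uniform in $V$, by comparison with $\re (e^{U-w})$), gives $\log|g(u_0)-a_\pm|\lesssim\tfrac12\log\eps_U+O(1)$ at a point $u_0$ on the centre line far from $U$, which forces $a_+=a_-$; and re-running the estimate with $c=a_+=a_-$ makes both long sides "good'' boundary, which yields $g(w)\to a$ uniformly as $\re w\to\infty$. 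The only piece you leave as a black box is the harmonic-measure decay, which is classical and exactly the Phragm\'en--Lindel\"of substance of the argument. The trade-off is clear: your version is self-contained but longer, and since it amounts to reproving the quoted Lindel\"of theorem, the paper (and your closing remark) are right to prefer the citation.
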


We will also use the following result.
\begin{lemma}\label{lemma-QeP}
Let $f$ be a meromorphic function with only finitely many zeros and poles.
Suppose that there exists $N\in\N$ such  that 
for each $R>0$ the set $\{z\colon |f(z)|>R\}$ has at most $N$ unbounded components.
Then $f$ has the form
\begin{equation}\label{10b}
f(z)=Q(z)e^{P(z)}, 
\end{equation}
where $Q$ is rational and where $P$ is a polynomial of degree at most~$N$.
\end{lemma}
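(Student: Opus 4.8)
The plan is to factor $f$ to isolate an entire exponential factor and then show that the hypothesis on the tracts over $\infty$ forces that factor to have polynomial exponent. Since $f$ is meromorphic with only finitely many zeros and poles, pick a rational function $Q$ having exactly the zeros and poles of $f$ (with multiplicities); then $f/Q$ is entire and zero-free, so $f/Q=e^g$ for some entire function $g$, i.e.\ $f=Qe^g$. It remains to prove that $g$ is a polynomial with $\deg g\le N$. For large $|z|$ we have $\log|Q(z)|=d\log|z|+O(|z|^{-1})$ for an integer $d$, hence $\bigl|\log|f(z)|-\re g(z)\bigr|\le|z|^{1/2}$ for $|z|$ large. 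Consequently $\{\re g>\log R+|z|^{1/2}\}\subset\{|f|>R\}\subset\{\re g>\log R-|z|^{1/2}\}$ off a large disc, and a standard tract-counting argument shows that the unbounded components of $\{|f|>R\}$ and of $\{\re g>\log R\}$ are in bijection once $R$ is large. In particular $\{\re g>t\}$ has at most $N$ unbounded components for every $t\in\R$.

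Next I would reduce to a purely function-theoretic statement about $g$. Since $e^g$ is entire and zero-free, $\{|e^g|>R\}$ has no bounded component by the maximum modulus principle and $\{|e^g|<R\}$ has no bounded component by the minimum modulus principle; equivalently, every component of $\{\re g>t\}$ is unbounded and simply connected, for all $t$. So it suffices to prove: \emph{if $g$ is entire and $\{\re g>t\}$ has at most $N$ components for every $t$, then $g$ is a polynomial with $\deg g\le N$}. Fix $t_0$ large and let $U_1,\dots,U_k$, with $k\le N$, be the components of $\{\re g>t_0\}$; on each $U_j$ the function $\re g-t_0$ is a positive harmonic function vanishing on $\partial U_j$, while on $\C\setminus\bigcup_j\overline{U_j}$ we have $\re g\le t_0$, so $e^g$ is bounded there.

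Finally I would invoke the Ahlfors distortion theorem (the Denjoy--Carleman--Ahlfors circle of ideas): in each $U_j$, with $\theta_j(r)$ the angular measure of $U_j\cap\{|z|=r\}$, the quantity $\max_{U_j\cap\{|z|=r\}}\re g$ is controlled, up to additive constants, by $\exp\bigl(\pi\int_{r_1}^{r}\frac{ds}{s\,\theta_j(s)}\bigr)$. Combining this with the boundedness of $\re g$ from above on the at most $N$ ``gaps'' between the tracts (a Phragm\'en--Lindel\"of argument) and with the mean-value identity $\frac1{2\pi}\int_0^{2\pi}\re g(re^{i\phi})\,d\phi=\re g(0)$, which forces the large positive values of $\re g$ on the tracts to be balanced by equally large negative values elsewhere, one deduces that the $\theta_j(r)$ are bounded below and the number of tracts is eventually constant; hence $\re g$ grows at most polynomially and $g$ has finite order. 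An entire function $g$ of finite order for which $e^g$ also has finite order must be a polynomial (for transcendental entire $g$, Borel--Carath\'eodory shows $e^g$ has infinite order), so $g$ is a polynomial; then $\{\re g>t\}$ has exactly $\deg g$ components for large $t$, whence $\deg g\le N$. I expect the main obstacle to be precisely this last step — ruling out a transcendental $g$ with only finitely many tracts over $\infty$: the hypothesis says nothing about the behaviour of $g$ ``over $0$'', so one must use the harmonicity of $\re g$ (mean-value property, Phragm\'en--Lindel\"of) to propagate the rigidity coming from the finitely many tracts, and the Ahlfors-distortion bookkeeping has to be done with care because the tracts may have very uneven angular widths.
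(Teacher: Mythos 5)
Your overall strategy (write $f=Qe^{g}$ and transfer the tract hypothesis from $f$ to $\re g$) matches the paper's opening move, but the endgame contains a genuine gap. The decisive claim in your sketch is that Ahlfors distortion, Phragm\'en--Lindel\"of on the gaps, and the mean-value identity force the angular widths $\theta_j(r)$ to be bounded below and hence $\re g$ to grow at most polynomially. None of these tools can deliver an \emph{upper} growth bound: the Ahlfors distortion inequality (and the whole Denjoy--Carleman--Ahlfors circle of ideas) gives a \emph{lower} bound for $\max_{U_j\cap\{|z|=r\}}\re g$ in terms of $\exp\bigl(\pi\int ds/(s\,\theta_j(s))\bigr)$, and the mean-value identity only says that large positive values of $\re g$ on a circle are balanced by large negative values on the same circle --- which is just as true for a hypothetical transcendental $g$ with two enormous, rapidly growing tracts as it is for a polynomial, so it puts no cap whatsoever on the growth rate. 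Nothing in the hypothesis gives an a priori lower bound on $\theta_j(r)$ (a tract could, for all you have shown, be exponentially thin), and without such a bound plus a reverse Ahlfors-type estimate no polynomial growth bound follows. Note also that the lemma carries no finite-order hypothesis on $f$, so the fallback ``an entire $g$ of finite order for which $e^{g}$ also has finite order must be a polynomial'' is circular: finiteness of the order of $e^{g}$ is precisely what is not yet known. In short, the difficulty you yourself flag --- ruling out a transcendental $g$ with finitely many tracts --- is exactly the part your argument does not close.

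The paper closes it by a completely different, non-growth-theoretic argument. First, the correspondence you dismiss as ``a standard tract-counting argument'' is itself proved there, using the asymptotic-path results of Talpur and Lewis--Rossi--Weitsman (paths in each unbounded component of $\{|f|>e^{K}\}$ along which $\log|f|/\log|z|\to\infty$) together with Lindel\"of's theorem; this is needed because at a fixed level an unbounded component of $\{|f|>R\}$ can leave a component of $\{\re g>t\}$ in regions where $\log|Q|$ dominates. Having thus shown that $A'=\{z\colon\re P(z)>0\}$ has at most $N$ components, the paper proves a \emph{valence} bound: if $\re w>0$ and $w$ is not a critical value of $P$, then by the Gross star theorem two distinct preimages of $w$ can be joined to $\infty$ by curves inside $A'$ on which $P$ follows a ray in the right half-plane, and if the preimages lay in the same component of $A'$, these curves would bound a domain whose $P$-image must contain points of negative real part --- a contradiction. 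Hence each such $w$ has at most $N$ preimages, so $P$ is a polynomial of degree at most $N$. Some argument of this kind (or another genuine substitute, e.g.\ a correct level-set or harmonic-measure rigidity argument) is required; the growth bookkeeping you sketch does not supply it.
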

\begin{proof}
It is clear that $f$ has the form~\eqref{10b} with a rational function $Q$ and
an entire function~$P$. We have to prove that $P$ is a polynomial
of degree at most~$N$.

Let 
\[
u(z)=\log|f(z)|=\re P(z)+\log|Q(z)|.
\]
For $K>0$ we consider the sets
\begin{equation}\label{10c}
A=\{z\colon u(z)>K\}
\quad\text{and}\quad
B=\{z\colon u(z)<-K\} .
\end{equation}
We may choose $K$ so large that every connected component of $A$ containing one of
the finitely many poles of $f$ is bounded, and such that $A$ has the maximal number
of unbounded connected components subject to this condition.
Then each unbounded connected component of $A$ is a neighbourhood of a
unique singularity of $f^{-1}$ over~$\infty$.
Similarly, for large $K$ 
each unbounded connected component of $B$ is a neighbourhood of a
unique singularity of $f^{-1}$ over~$0$.
By Lindel\"of's lemma~\ref{lemma-lindeloef},
``between'' two such singularities of $f^{-1}$ over $0$  there is a singularity of
$f^{-1}$ over $\infty$, and vice versa. Therefore, between two unbounded components of $A$ 
there is an unbounded component of $B$, and vice versa. In particular, $A$ and $B$ have the same
finite number of unbounded connected components for large $K$.

By \cite{Talpur1976,Lewis1984}
each unbounded component of $A$ contains a path $\gamma$ such that 
$u(z)/\log |z|\to\infty$ as $z\to\infty$, $z\in\gamma$,
while each unbounded component of $B$ contains a path $\gamma$ such that 
$u(z)/\log |z|\to -\infty$ as $z\to\infty$, $z\in\gamma$.
Thus, given $K'>0$, the ``tail'' of such a curve $\gamma$ is contained in a component of 
\begin{equation}\label{10d}
A'=\{z\colon \re P(z)>K'\}
\quad\text{or}\quad
B'=\{z\colon \re P(z)<K'\},
\end{equation}
respectively. (Note that the components of $A'$ and $B'$ are always unbounded.)
By the same argument, the components of $A'$ and $B'$ contain curves whose tails are 
contained in unbounded components of $A$ and $B$, respectively.
Overall we see that there is a one-to-one correspondence
between the unbounded components of $A$ and the components of~$A'$.

We claim that $A'$ has connected complement if $K'$ is sufficiently large.
In order to show this we note that every complementary component of $A'$ must contain
some connected component of $B'$, and thus the number of complementary components is
finite. In particular, there is $R>0$ such that, for sufficiently large $K'$, every complementary
component of $A'$ intersects the disk $D(0,R)$.
If additionally $K' > M(R,P)$, then this disk does not intersect $A'$, and therefore
the complement of $A'$ is connected.
This implies that the complement of every component of $A'$ is connected.

By hypothesis, $A$ has at most $N$ unbounded components, and hence $A'$ has at most $N$ components. 
Let now $w\in\C$ with $\re w>K'$ such that $w$ is not a critical value
of $P$ and let $z_1,z_2\in\C$ be such that $P(z_1)=P(z_2)=w$. 
We will show that if $z_1\neq z_2$, then $z_1$ and $z_2$ are contained
in different components of~$A'$.  Thus $w$ has at most $N$ preimages.
Since this holds for every $w$ with $\re w>K'$ which is not a critical value
of $P$, the conclusion follows.

Thus suppose that $z_1\neq z_2$ and
let $\varphi_1$ and $\varphi_2$ be branches of $P^{-1}$ such that 
$\varphi_1(w)=z_1$ and $\varphi_2(w)=z_2$.
By the Gross star theorem~\cite[p.~292]{Nevanlinna1953}
there exists $t\in (-\pi/2,\pi/2)$ such that 
both $\varphi_1$ and $\varphi_2$ can be continued analytically along
the ray $\{w+re^{it}\colon r\geq 0\}$.
For $j=1,2$, let $\gamma_j$ be the image of this ray under~$\varphi_j$.
Then $\gamma_j$ is a curve connecting $z_j$ with~$\infty$.

Suppose that now that $z_1$ and $z_2$ are in the same component $U$ of~$A'$.
We connect $z_1$ and $z_2$ by a simple curve $\gamma_0$ in $U$ which intersects
$\gamma_1$ and $\gamma_2$ only at $z_1$ and $z_2$. Since $U$ has connected complement, the curves 
$\gamma_0$, $\gamma_1$ and $\gamma_2$ bound a subdomain $V$ of~$U$.
Its image $P(V)$ is an unbounded domain whose boundary is contained
in the union of the ray $\{w+re^{it}\colon r\geq 0\}$ with $P(\gamma_0)$.
This implies that $P(V)$ contains points with real part less than~$K'$.
This is a contradiction since $V\subset U$ and $U$ is a component of~$A'$.
\end{proof}

\begin{proof}[Proof of Proposition~\ref{lemma-broom1}]
Without loss of generality we may assume that $F$ is holomorphic, $D=H^+$ and  $\gamma(t)=t$.
Since $(D,F)$ has the oriented asymptotic values $(a_\pm,0)$ at $\pm\infty$,
there exists a compact interval $I$ such that, by reflection,
$F$ extends to a map holomorphic in $\C\setminus I$.
By Lemma~\ref{lemma-mh} there exist $R>0$ and
a symmetric,  topologically holomorphic map $F_1\colon \C\to\bC$ with no poles in
$\C^*$ such that $F_1(z)=F(z)$
for $|z|>R$. Next, there exists a homeo\-morphism $\phi_1\colon\C\to\C$ such that
$F_0=F_1\circ \phi_1$ is meromorphic in~$\C$.
Here $\phi_1$ can be chosen to be symmetric so that $F_0$ is also symmetric
and has no poles in~$\C^*$.

The function $F_0$ has only finitely many zeros and no poles in~$\C^*$.
Thus there exist a symmetric rational function $Q$ and a symmetric entire function $g$ such that
\[
F_0(z)=Q(z)e^{g(z)}.
\]
Moreover, $F_0$ has only finitely many critical points. Thus
\[
L(z):=\frac{F_0'(z)}{F_0(z)}= g'(z)+\frac{Q'(z)}{Q(z)}
\]
has only finitely many zeros.
Hence there exist a  symmetric rational function $R_0$ with no poles in $\C^*$
and a symmetric entire function $h$ such that
\begin{equation}\label{10z}
L(z)=R_0(z) e^{h(z)}.
\end{equation}
Note that $F_0$ has the form
\begin{equation}\label{F0z}
F_0(z)=\exp\!\left(\int_{1}^z L(t)\dif t+c_0\right)
\end{equation}
for some $c_0\in\R$.
The conclusion thus follows if we can show that $h$ in~\eqref{10z} is a quadratic polynomial,
since once this is known, we can achieve by an affine change of variable that $h(z)=-z^2$.
This will change the lower limit in the integral in~\eqref{F0z} from $1$ to some 
$\xi\in\R$.

For $K>0$, consider a component $U$ of
\begin{equation}\label{10y}
\{z\colon |L(z)|<e^{-K}\} =\{z\colon \re h(z)+ \log |R_0(z)|<-K\}
\end{equation}
which does not contain any zeros of $L$.
Huber's~\cite{Huber1957} Lemma~\ref{lemma-huber} yields that $U$ contains a curve $\Gamma$ tending to $\infty$
such that
\[
\int_{\Gamma} |L(z)|\cdot|\dif z| <\infty.
\]
Hence there exists $\alpha\in\C$ such that
if $z$ is a point on $\Gamma$ and if $\Gamma_z$ denotes the part of $\Gamma$ which connects the starting point of
$\Gamma$ with~$z$, then
\[
\int_{\Gamma_z} L(t)\dif t =
\int_{\Gamma_z} \frac{F_0'(t)}{F_0(t)} \dif t \to \alpha
\quad\text{as}\ z\to\infty,\; z\in\Gamma.
\]
Hence there exists $\beta\in\C^*$ such that $F_0(z)\to\beta$ as $z\to\infty$, $z\in\Gamma$.

By hypothesis, the inverse of $F$ and hence that of $F_0$ have no
singularities over values in $\C^*$ except for
the two asymptotic values $a_\pm$, for which the positive and negative
real axis are asymptotic paths.

This implies (cf.\ Proposition~\ref{prop:FGsingularities})
that the set considered in~\eqref{10y} has at most two components
which do not contain a zero of $L$.
Lemma~\ref{lemma-QeP} implies that $h$ is a polynomial of degree at most $2$.
In fact, since the positive and negative axis are asymptotic paths, the
degree of $h$ is exactly~$2$.
\end{proof}

Proposition~\ref{lemma-broom1} required that $a_\pm>0$ and $\theta_\pm=0$.
We will now lift this restriction.
In our applications, it will be convenient to work with 
the slit plane $\Omega^0:=\C\setminus [0,\infty)$ instead of the upper half-plane~$H^+$.
Note that $z\mapsto z^2$ maps $H^+$ onto~$\Omega^0$.
The boundary of $\Omega^0$ is parametrized by the curve $\gamma_\Omega\colon\R\to\partial\Omega$ which is given by 
$\gamma_\Omega(t)=-t$ for $t<0$ and $\gamma_\Omega(t)=t$ for $t\geq 0$.
So we consider the bordered surface $\Omega$ whose interior
is $\Omega^0$ and the border $\partial\Omega$ is the curve $\gamma_\Omega$.
We put $\Delta=\{z\colon |z|\geq 1\}$.
\begin{proposition}\label{lemma-broom3}
Let  $(D,F)$ be of type $\Broom_{d_+,d_-}$. Then there exist compact sets $K$ and~$K'$,
a local homeo\-morphism 
$B\colon \Omega\setminus K'\to\C$
and $t_0>0$
such that $(\Omega\setminus K', B)\sim (D\setminus K,F)$ and
\begin{equation}\label{10x}
B(\gamma_\Omega(t))-a_\pm = e^{i\theta_\pm}\exp(-|t|)
\quad\text{for}\ |t|\geq t_0.
\end{equation}
The map $B$ is quasiregular with
\begin{equation}\label{l_B}
\logarea\!\left( \supp\!\left(\mu_{B}\right)\cap \Delta \right) <\infty
\end{equation}
and there exists $c\in\C^*$ and $d\in\R$ with $2d\in\Z$ such that,
as $z\to\infty$, 
\begin{equation}\label{12c}
B(z)-a_\pm\sim c z^d \exp(-z)
\end{equation}
 in any closed subsector of the first or fourth quadrant, respectively,
while
\begin{equation}\label{12d}
\log B(z)\sim c z^d \exp(-z)
\end{equation}
in any closed subsector of the left half-plane.
\end{proposition}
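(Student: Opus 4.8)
The plan is to reduce Proposition~\ref{lemma-broom3} to the special case treated in Proposition~\ref{lemma-broom1} by a quasiconformal post‑composition that normalises the oriented asymptotic values, and then to transport the resulting model from the half‑plane to the slit plane $\Omega$ by means of the map $z\mapsto z^2$. Concretely, I would first fix a quasiconformal homeomorphism $\psi\colon\bC\to\bC$ with $\psi(0)=0$, $\psi(\infty)=\infty$, which equals the identity on neighbourhoods of $0$ and of $\infty$, which near $a_\pm$ is the affine map $z\mapsto e^{-i\theta_\pm}\lambda_\pm(z-a_\pm)+a_\pm'$ with $\lambda_\pm>0$ and $a_\pm'>0$ (with $a_+'=a_-'$, $\lambda_+=\lambda_-$ when $a_+=a_-$), and whose Beltrami coefficient is supported on a compact subset of $\C^*$ bounded away from $0$, $\infty$, $a_+$ and $a_-$. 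Such a $\psi$ is assembled from point‑pushing homeomorphisms dragging $a_\pm$ to $a_\pm'$ along arcs in $\C^*$ avoiding $0$, together with local twists near $a_\pm$ realising the prescribed rotations of the asymptotic directions. Then $\tilde F:=\psi\circ F\colon D\to\C^*$ is a quasiregular local homeomorphism; since $\psi$ is a homeomorphism fixing $0$ and $\infty$, $\tilde F^{-1}$ has singularities over $\C^*$ exactly over $\psi(a_\pm)=a_\pm'$, and $\tilde F$ has the oriented asymptotic values $(a_\pm',0)$ at $\pm\infty$. After pre‑composing with a quasiconformal homeomorphism (cf.\ Lemma~\ref{lemma-qr}), which we henceforth assume done, $\tilde F$ is a holomorphic element of type $\Broom_{(a_+',0),(a_-',0)}$ with $a_\pm'>0$.

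By Proposition~\ref{lemma-broom1} there are compact sets $K,K_0$, a symmetric rational function $R_0$ with at most one pole, and $\xi,c_0\in\R$ such that $(D\setminus K,\tilde F)\sim(H^+\setminus K_0,\tilde F_0)$ with $\tilde F_0(z)=\exp\!\big(\int_{\xi}^{z}R_0(t)e^{-t^2}\,dt+c_0\big)$. Composing the equivalences and recalling $F=\psi^{-1}\circ\tilde F$ gives $(D\setminus K,F)\sim(H^+\setminus K_0,\psi^{-1}\circ\tilde F_0)$. The branch of $\sqrt{\,\cdot\,}$ with values in $H^+$ is a conformal isomorphism $\Omega^0\to H^+$ that extends to a homeomorphism of bordered surfaces $\Omega\to\overline{H^+}$ carrying $\gamma_\Omega$ to the standard parametrisation of $\partial H^+$; pulling back along it, and setting $K':=\{z^2:z\in K_0\}$ and
\[
B(w):=\psi^{-1}\!\big(\tilde F_0(\sqrt w)\big),
\]
we obtain $(\Omega\setminus K',B)\sim(H^+\setminus K_0,\psi^{-1}\circ\tilde F_0)\sim(D\setminus K,F)$. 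Here $B$ is quasiregular, being the composition of the holomorphic map $\tilde F_0\circ\sqrt{\,\cdot\,}$ with the quasiconformal $\psi^{-1}$, and $\supp\mu_B=(\tilde F_0\circ\sqrt{\,\cdot\,})^{-1}\!\big(\supp\mu_{\psi^{-1}}\big)$, where $\supp\mu_{\psi^{-1}}=\psi(\supp\mu_\psi)$ is again a compact subset of $\C^*$ bounded away from $0,\infty,a_+',a_-'$. Reparametrising $\gamma_\Omega$ near $\pm\infty$ --- that is, composing with a self‑homeomorphism of $\Omega$ supported in an arbitrarily thin neighbourhood of $\partial\Omega$ --- finally arranges~\eqref{10x}; since this modification leaves closed subsectors untouched it does not affect~\eqref{12c} and~\eqref{12d}.

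The asymptotics follow by a substitution and integration by parts. Putting $t=\sqrt s$ shows that $\phi(w):=\log\tilde F_0(\sqrt w)=c_0+\tfrac12\int_{\xi^2}^{w}R_0(\sqrt s)\,s^{-1/2}e^{-s}\,ds$ is holomorphic on $\Omega^0$, with $R_0(\sqrt s)\,s^{-1/2}\sim(\text{const})\cdot s^{d}$ as $s\to\infty$ for a suitable $d$ with $2d\in\Z$; integration by parts then gives $\phi(w)-\log a_\pm'\sim c\,w^{d}e^{-w}$ as $w\to\infty$ in any closed subsector of the first, resp.\ fourth, quadrant of $\Omega^0$, and $\phi(w)\sim c\,w^{d}e^{-w}$ in any closed subsector of the left half‑plane. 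Since $\psi^{-1}$ is affine near $a_\pm'$ with $\psi^{-1}(a_\pm')=a_\pm$, the first of these yields~\eqref{12c}; since $\psi^{-1}$ is the identity near $0$ and near $\infty$, one has $\log B(w)=\phi(w)+O(1)$ whenever $|\phi(w)|\to\infty$, which yields~\eqref{12d}. For~\eqref{l_B} one uses that $\supp\mu_{\psi^{-1}}$ is bounded away from $0,\infty,a_+',a_-'$: its preimage under $\tilde F_0\circ\sqrt{\,\cdot\,}$ therefore avoids the first and fourth quadrants as well as the ``fat'' logarithmic tracts of $B$ over $0$ and over $\infty$, and for large $|w|$ it lies in a bounded‑width neighbourhood of the curve $\{\,\re w=d\log|w|\,\}$ --- which is asymptotic to the imaginary axis --- together with thinner sets. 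A direct estimate, using that $|\phi(w)|$ is of order $1$ on this curve and grows exponentially as $w$ leaves it, shows that this set meets each annulus $\{2^{j}\le|w|<2^{j+1}\}$ in area $O(2^{j})$, whence $\logarea(\supp\mu_B\cap\Delta)\le C\sum_j 2^{-j}<\infty$.

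The step I expect to be the main obstacle is this last logarithmic‑area estimate. The preimage under $\tilde F_0\circ\sqrt{\,\cdot\,}$ of a fixed compact subset of $\C^*$ is genuinely unbounded --- it threads between the infinitely many logarithmic tracts of $B$ over $0$ and over $\infty$ --- so one must arrange, through the choice of $\psi$, that $\supp\mu_{\psi^{-1}}$ stays away from $0,\infty,a_\pm'$, and then carry out a sufficiently precise count of the preimage components; the explicit Gaussian factor $e^{-t^2}$ (equivalently the exponential decay of $e^{-s}$ after the substitution) together with the local univalence of $\tilde F_0$ are exactly what make the areas summable over dyadic scales.
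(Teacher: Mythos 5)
Most of your proposal is, up to inessential reorderings, the paper's own argument: you normalise the asymptotic values by a quasiconformal post-composition that is the identity near $0$ and $\infty$ and affine near the asymptotic values, apply Proposition~\ref{lemma-broom1} to the normalised element, transplant to $\Omega$ by $z\mapsto z^2$, obtain \eqref{12c} and \eqref{12d} by integration by parts, and estimate the logarithmic area of the dilatation support by locating it in exponentially thin strips near a curve asymptotic to the imaginary axis (your dyadic count is a repackaging of the computation culminating in \eqref{11k}). The one step you dispose of in a single sentence, however, contains a genuine gap, and it is exactly the point the paper spends most effort on: the \emph{exact} boundary normalisation \eqref{10x} while preserving quasiregularity and \eqref{l_B}.

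Concretely, before any reparametrisation your map satisfies $B(\gamma_\Omega(t))-a_\pm=e^{i\theta_\pm}u_\pm(|t|)$ with $u_\pm(t)\sim \mathrm{const}\cdot t^{d}e^{-t}$ (this follows from your own asymptotics for $\phi$). Hence the boundary reparametrisation needed to achieve $u_\pm(t)=e^{-t}$ is $t\mapsto t-d\log t+O(1)$, a displacement that is unbounded whenever $d\neq 0$ (which is the generic case, $d$ being governed by the degree of $R_0$). A self-homeomorphism of $\Omega$ supported in an ``arbitrarily thin'' (bounded-width) neighbourhood of $\partial\Omega$ that induces such a boundary displacement cannot be quasiconformal: interpolating a displacement of size $\asymp\log t$ to the identity across a collar of bounded width forces the dilatation to grow like $\log t$. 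So, as written, your final $B$ is not shown to be quasiregular at all, and \eqref{l_B} is unproved; moreover, even with a suitable collar you need control of the derivative of the reparametrising map (the analogue of $h'(x)\to 1$), which you never establish. The paper avoids the problem by performing the normalisation in the half-plane variable \emph{before} squaring, where the correction is $h(x)=x+o(1)$ with $h'(x)\to 1$ (see \eqref{10l}--\eqref{10n}), interpolated by the explicit shear \eqref{10o} across a strip of bounded height, whose support has finite logarithmic area. Your route can be repaired either by doing the same, or by widening the collar to width comparable with $1+\log_+|w|$ (still of finite logarithmic area) and verifying the dilatation bound from differentiated asymptotics of $u_\pm$; but the claim that an arbitrarily thin collar suffices, and the silent assumption that the reparametrisation does not affect quasiregularity or \eqref{l_B}, are not correct as stated.
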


\begin{proof}
As already mentioned, we will reduce this result to Proposition~\ref{lemma-broom1}.
Since this proposition is phrased for functions defined in the half-plane $H^+$,
we will first prove a ``half-plane version'' of our conclusion.
We will show that there exist compact sets $K$ and $K_1$,
a local homeo\-morphism $F_1\colon H^+\setminus K_1\to\C$ and $x_0>0$
such that $(H^+\setminus K_1, F_1)\sim (D\setminus K,F)$ and
\begin{equation}\label{F_1}
F_1(x)-a_\pm = e^{i\theta_\pm}\exp(-x^2)
\quad\text{for}\ \pm x\geq x_0.
\end{equation}
Here $F_1$ is quasiregular with
\begin{equation}\label{l_F1}
\logarea\!\left( \supp\!\left(\mu_{F_1}\right)\cap \Delta \right) <\infty.
\end{equation}
Moreover, there exists $c\in\C^*$ and $d\in\Z$ such that
if $\delta>0$, then, as $z\to\infty$,
\begin{equation}\label{12a}
F_1(z)-a_\pm\sim c z^d \exp(-z^2)
\end{equation}
for $\delta\leq\arg z\leq \pi/4-\delta$
and $3\pi/4+\delta\leq\arg z\leq \pi-\delta$, respectively,
while
\begin{equation}\label{12b}
\log F_1(z)\sim c z^d \exp(-z^2)
\end{equation}
for $\pi/4+\delta\leq\arg z\leq 3\pi/4-\delta$.
Defining $B$ by $B(z^2)=F_1(z)$ we then see that $B$ has the properties stated.

In order to construct the function $F_1$ we note that if $a_+\neq a_-$,
then there exist $\varepsilon>0$ and  a quasiconformal map $\psi\colon\C\to\C$ such that
\begin{equation} \label{40f}
\psi(z)=
\begin{cases}
z & \text{if}\ |z|\leq \varepsilon \ \text{or}\ |z|\geq 1/\varepsilon,\\
a_\pm + 2^{\mp 1}e^{i\theta_\pm}\left(z-2^{\pm 1}\right) & \text{if}\ \left|z-2^{\pm 1}\right|\leq \varepsilon.
\end{cases}
\end{equation}
If $a_+= a_-:=a$ and thus $\theta_+=\theta_-=:\theta$,
then we choose $\psi$ such that 
\begin{equation} \label{40f1}
\psi(z)=
\begin{cases}
z & \text{if}\ |z|\leq \varepsilon \ \text{or}\ |z|\geq 1/\varepsilon,\\
a + e^{i\theta}\left(z-1\right) & \text{if}\ \left|z-1\right|\leq \varepsilon.
\end{cases}
\end{equation}
In the first case $(D,\psi^{-1}\circ F)$ has the oriented asymptotic values
$(2^{\pm 1},0)$ at~$\pm\infty$, in the second case
$(D,\psi^{-1}\circ F)$ has the oriented asymptotic value $(1,0)$ at~$\pm\infty$.

In both cases, Proposition~\ref{lemma-broom1} is applicable to $(D,\psi^{-1}\circ F)$
and  we conclude that there exists a function $F_0$ as given in~\eqref{10a}
and compact sets $K$ and $K_0$ such that 
$(D\setminus K, \psi^{-1} \circ F)\sim (H^+\setminus K_0, F_0)$.
This means that $(D\setminus K, F)\sim (H^+\setminus K_0, \psi \circ F_0)$.

In order to prove~\eqref{F_1} and~\eqref{l_F1},
we restrict to the case $a_+\neq a_-$. The other case is analogous.
Then $F_0$ has the oriented asymptotic value $(2,0)$ at $+\infty$ and thus 
\begin{equation}\label{10f}
\exp\!\left( \int_{\xi}^\infty R_0(t)e^{- t^2}\dif t +c_0 \right)=2.
\end{equation}
For large $x$, say $x\geq x_0$, we then have
\begin{equation}\label{10g}
\begin{aligned}
\psi(F_0(x))
&= a_+ +\frac12 e^{i\theta_+} (F_0(x)-2)
%\\ &
= a_+ +e^{i\theta_+} \left( \exp\!\left(-\int_{x}^\infty R_0(t)e^{- t^2}\dif t \right) -1\right) .
\end{aligned}
\end{equation}
Similarly, 
\begin{equation}\label{10h}
\psi(F_0(x))
= a_- +e^{i\theta_-} \left( \exp\!\left(-\int_{-\infty}^x R_0(t)e^{- t^2}\dif t \right) -1\right) 
\end{equation}
for negative $x$ of large modulus, and we may choose $x_0$ such that this holds
for $x\leq -x_0$.
Note that since $F$ and $\psi\circ F_0$ have the oriented asymptotic values
$(a_\pm,\theta_\pm)$, this implies that $R_0(x)<0$ if $|x|$ is large.
We may assume that this holds for $|x|\geq x_0$.

We put
\begin{equation}\label{10i}
u(x)=
\begin{cases}
\exp\!\left(-\int_{x}^\infty R_0(t)e^{- t^2}\dif t \right) -1& \text{if}\ x\geq x_0,\\
\exp\!\left(-\int_{-\infty}^x R_0(t)e^{- t^2}\dif t \right) -1& \text{if}\ x\leq -x_0,
\end{cases}
\end{equation}
so that 
\begin{equation}\label{10j}
\psi(F_0(x)) = a_\pm +e^{i\theta_\pm}u(x)
\quad\text{if}\ \pm x\geq x_0.
\end{equation}
We have $u(x)>0$ and can thus define $h\colon (-\infty,-x_0]\cup[x_0,\infty)\to\R$ by
\begin{equation}\label{10j1}
\exp(-h(x)^2)=u(x),
\end{equation}
with $h(x)>0$ for $x\geq x_0$ and $h(x)<0$ if $x\leq -x_0$.
It is easy to see that for $x\geq x_0$ we have
\begin{equation}\label{10k}
u(x) \sim -\int_{x}^\infty R_0(t)e^{- t^2}\dif t \sim -\frac{R_0(x)}{2x}e^{- x^2}
\quad\text{as}\ x\to+\infty.
\end{equation}
Similarly
\begin{equation}\label{10k1}
u(x) \sim \frac{R_0(x)}{2x}e^{- x^2}
\quad\text{as}\ x\to-\infty.
\end{equation}
It follows that 
\begin{equation}\label{10l}
h(x)^2 = x^2  - \log\!\left|\frac{R_0(x)}{2x}\right| +o(1) 
\end{equation}
and hence 
\begin{equation}\label{10m}
h(x) = x +o(1) 
\quad\text{as}\ x\to\pm\infty.
\end{equation}
Moreover, a computation shows that
\begin{equation}\label{10n}
h'(x) \to 1 
\quad\text{as}\ x\to\pm\infty.
\end{equation}
Increasing the value of $x_0$ if necessary, we can extend $h$ to a 
diffeomorphism of~$\R$.
We now define $\tau\colon H^+\to  H^+$,
\begin{equation} \label{10o}
\tau(x+iy)=
\begin{cases}
x+iy+(1-y)(h(x) -x)  & \text{if}\ 0< y\leq 1 ,\\
x+iy                              & \text{if}\ y>1 .
\end{cases}
\end{equation}
For $0<y<1$ we have
\begin{equation} \label{10p}
\mu_{\tau}(z)
= \frac{(1-y)(h'(x)-1)-i(h(x)-x)}{2+(1-y)(h'(x)-1)+i(h(x)-x)} .
\end{equation}
Using~\eqref{10m} and~\eqref{10n} we see that $\tau$ is quasiconformal.

We now put $F_1=\psi\circ F_0\circ \tau^{-1}$. 
Since $\tau(x)=h(x)$ for $x\in\R$ we deduce from~\eqref{10j} and~\eqref{10j1} that
\begin{equation}\label{10r}
\begin{aligned}
F_1(x) - a_\pm 
&=\psi(F_0(\tau^{-1}(x)))-a_\pm
%\\ &
= e^{i\theta_\pm} u(\tau^{-1}(x))
\\ &
= e^{i\theta_\pm} \exp(-h(\tau^{-1}(x))^2)
%\\ &
= e^{i\theta_\pm} \exp(-x^2)
\end{aligned}
\end{equation}
for $\pm x\geq x_0$. Thus we have~\eqref{F_1}.

To prove~\eqref{l_F1} let
\begin{equation}\label{10s}
M=\left\{z\colon \varepsilon\leq |z|\leq \frac{1}{\varepsilon},\; \left|z-2^{\pm 1}\right|\geq \varepsilon\right\} .
\end{equation}
Recall here that $F_0$ has logarithmic singularities over $2^{\pm 1}$, as well as
over $0$ and~$\infty$. Thus $M$ is disjoint of the set of singular values.
To prove~\eqref{l_F1} it suffices to prove that 
\begin{equation}\label{10t}
\logarea ( F_0^{-1}(M)\cap\Delta) < \infty .
\end{equation}
In the terminology of~\cite[Definition~1.5]{Epstein2015}
we thus have to show that $F_0$ has the \emph{area property}.

Put
\begin{equation}\label{11a}
g(z)=\int_{\xi}^z R_0(t)e^{-t^2}\dif t +c_0
\end{equation}
so that $F_0=\exp g$.
Let $T_1=\{x+iy\colon 0\leq y\leq x-1\}$, $T_2=\{x+iy\colon y\geq |x|+1\}$ and
$T_3=\{x+iy\colon 0\leq y\leq -x-1\}$. We will show that 
\begin{equation}\label{11a0}
\logarea ( F_0^{-1}(M)\cap T_j) < \infty 
\end{equation}
for $1\leq j\leq 3$. This easily yields~\eqref{10t}.

Since $F_0(x)\to 2$ as $x\to\infty$ we find that $g(x)\to \log 2$ as $x\to\infty$.
Similarly as before we find that there exists $r_0$ such that
\begin{equation}\label{11b}
g(z)-\log 2 = -\int_{z}^\infty R_0(t)e^{-t^2}\dif t
\quad \text{for}\ 
0\leq \arg z \leq  \frac{\pi}{4},\; |z|\geq r_0.
\end{equation}
Here the path of integration connects $z$ with the positive axis, 
and then runs to $\infty$ along it.
To be definite, let the path of integration be the 
segments $[z,2|z|]$ and $[2|z|,\infty)$.
Integration by parts (cf.~\cite[Lemma~4.1]{Hemke2005}) shows that 
\begin{equation}\label{11c}
g(z)-\log 2 \sim \frac{R_0(z)}{2z} e^{-z^2}
\quad \text{as}\  z\to\infty,\; z\in T_1.
\end{equation}
For $z=x+iy\in T_1$ we have $\re(z^2)=x^2-y^2\geq 2x+1\geq x$ and
hence $|\exp(-z^2)|\leq e^{-x}$. It follows that 
$g(z)\to \log 2$ as $z\to\infty$, uniformly for $z\in T_1$.
Hence $F_0(z)\to 2$ as $z\to\infty$, uniformly for $z\in T_1$.
Thus $F_0^{-1}(M)\cap T_1$ is bounded so that~\eqref{11a0} holds for $j=1$.
An analogous argument shows that~\eqref{11a0} holds for $j=3$.

To show that~\eqref{11a0} also holds for $j=2$, we
note that $z\mapsto p(z):=i\sqrt{z}$ maps the right half-plane  $H$ 
conformally onto $\{z\colon \pi/4<\arg z<3\pi/4\}$.
Put $g_0=g\circ p$ and $G_0=F_0\circ p=\exp g_0$.
In view of Lemma~\ref{lemma-Aalpha} it thus suffices to show
that with $T_2'=p^{-1}(T_2)$ we have 
\begin{equation}\label{11e}
\logarea\!\left(G_0^{-1}(M)\cap T_2' \right) <\infty.
\end{equation}
With $K=\log(1/\varepsilon)$ we have 
\begin{equation}\label{11g}
G_0^{-1}(M)\subset A_K :=\{z\colon |\re g_0(z)|< K\}.
\end{equation}
Thus~\eqref{11e} will follow if we show that
\begin{equation}\label{11e1}
\logarea\!\left(A_K \cap T_2' \right) <\infty.
\end{equation}

Integration by parts yields that 
\begin{equation}\label{11d}
g(z)\sim -\frac{R_0(z)}{2z} e^{-z^2}
\quad \text{as}\  z\to\infty,\; z\in T_2.
\end{equation}
Thus there exists $\alpha\in\C^*$ and $\beta\in\R$ such that
\begin{equation}\label{11f}
g_0(z)\sim -\frac{R_0(i\sqrt{z})}{2i\sqrt{z}} e^{z}\sim \alpha z^{\beta} e^z
\quad \text{as}\  z\to\infty,\; z\in T_2'.
\end{equation}
For $z\in T_2'$ of sufficiently large modulus we may thus write 
$g_0(z)=\exp\varphi_0(z)$ with a map $\varphi_0$ satisfying
\begin{equation}\label{11h}
\varphi_0(z) =z+ \beta\log z +\log\alpha+o(1)
\quad \text{as}\  z\to\infty,\; z\in T_2'.
\end{equation}
With $T_2''=\{z\in T_2'\colon \dist(z,\partial T_2')\geq 1\}$ this yields that
\begin{equation}\label{11i}
\varphi_0'(z) \to 1
\quad \text{as}\  z\to\infty,\; z\in T_2''.
\end{equation}
Hence for $C\subset T_2''$ we have $\logarea \varphi_0(C)<\infty$ if
and only if $\logarea C<\infty$. 
To prove~\eqref{11e1} it thus suffices to show that 
\begin{equation}\label{11i1}
C=\{z\colon \re z>1,\; |\re(e^z)|\leq K\}
\end{equation}
has finite logarithmic area. In order to do so we put, for $k\in\Z$,
\begin{equation}\label{11i2}
S_k=\{x+iy\colon x\geq 1, \; k\pi\leq y\leq (k+1)\pi\}. 
\end{equation}
For $z=x+iy\in S_k$ we have $z\in C$ if $|\cos y|=|\sin(y-(k+1/2)\pi)|
\leq e^{-x}$. Hence 
\begin{equation}\label{11j}
S_k\cap C\subset S_k':=
\left\{ x+iy\colon\left|y-\left(k+\frac12\right)\pi\right|\leq e^{-x}\right\}.
\end{equation}
For $k\geq 1$ and $z=x+iy\in S_k'$ we have $x^2+y^2\geq y^2\geq k^2$. Thus  
\[
\begin{aligned}
\logarea(S_k')
&=\int_{S_k'}\frac{\dif x\,\dif y}{x^2+y^2}
%\\ &
\leq \frac{1}{k^2} \int_1^{\infty} \int_{\pi/2+\pi k -e^{-x}}^{\pi/2+\pi k +e^{-x}} \dif y\,\dif x
%\\ &
= \frac{2}{k^2} \int_1^{\infty} e^{-x} \dif x
= \frac{2}{e k^2}  .
\end{aligned}
\]
The same argument yields that if $k\leq -2$, then $\logarea(S_k')\leq 2/(e(k+1)^2)$. 
Overall we obtain
\begin{equation}\label{11k}
\begin{aligned}
\logarea C
&\leq \sum_{k=-\infty}^\infty \logarea(S_k')
%\\ &
\leq 
\frac{4}{e}\sum_{k=1}^\infty \frac{1}{k^2}
+ \logarea\{x+iy\colon x\geq 1,\;|y|\leq \pi\} 
<\infty.
\end{aligned}
\end{equation}
This completes the proof of~\eqref{11e1} and hence~\eqref{11e},
finishing the proof of~\eqref{F_1}.

Finally, it follows from the definition of $F_1$ and~\eqref{11c} that
\begin{equation}\label{11l}
\begin{aligned}
F_1(z)-a_+
&= \psi(F_0(z))-a_+
=\frac12 e^{i\theta_+} (F_0(z)-2) 
%\\ &
= e^{i\theta_+} (\exp(g(z)-\log 2) -1)
\\ &
\sim   e^{i\theta_+}( g(z)-\log 2 )
%\\ &
\sim  -e^{i\theta_+} \frac{R_0(z)}{2z} e^{-z^2}
\quad \text{as}\  z\to\infty,\; z\in T_1.
\end{aligned}
\end{equation}
This yields the asymptotics for $F_1(z)-a_+$ given in~\eqref{12a}.
Those for $F_1(z)-a_-$ are obtained analogously.
Finally, \eqref{12b} follows from~\eqref{11d} in the same fashion.
This completes the proof of Proposition~\ref{lemma-broom3}.
\end{proof}

\subsection{Elements with infinitely many zeros and poles}\label{elem-zeros}
We want to glue the element $(\Omega\setminus K',B)$ from Proposition~\ref{lemma-broom3}
to (a modification of) the tangent map.
Instead of the tangent we will, for $a\in\C\setminus\R$, consider the restriction of
\begin{equation} \label{40k}
v_a(z)=
\tan\!\left(\frac{z}{2}\right) \im a +\re a
\end{equation}
to the right half-plane~$H$.
The element $(H,v_a)$ 
has the oriented asymptotic values $(a,-\sign(\im a)\pi/2)$ at
$-\infty$ and $(\overline{a},\sign(\im a)\pi/2)$ at $+\infty$.
This yields that $(H,v_a)$ can be glued to $(\Omega\setminus K',B)$ if
\begin{equation} \label{40k1}
d_+=(a,-\sign(\im a)\pi/2)
\quad\text{or}\quad
d_-=(\overline{a},\sign(\im a)\pi/2).
\end{equation}
To make this gluing explicit we modify the map~$v_a$.
Note that
\begin{equation} \label{40l}
v_a(iy)= a+\left(\tan\!\left(\frac{iy}{2}\right) -  i\right) \im a = a - i\frac{ 2\im a}{e^{ y}+1}
\end{equation}
as well as
\begin{equation} \label{40m}
v_a(iy) =\overline{a}+\left(\tan\!\left(\frac{iy}{2}\right) +  i\right) \im a
=\overline{a} + i\frac{ 2\im a}{e^{-y}+1} .
\end{equation}
Our aim is to construct a quasiconformal map $\phi$ such that
if~\eqref{40k1} holds, then
there exists 
$t_1\geq t_0$ such that
\begin{equation} \label{40m1}
\begin{aligned}
(v_a\circ\phi)(\gamma_H(-t))) -a
&=
v_a(\phi(it))-a
%\\ &
=B(\gamma_\Omega(t))-a_+
\quad\text{for}\ t\geq t_1
\end{aligned}
\end{equation}
or
\begin{equation} \label{40m2}
\begin{aligned}
(v_a\circ\phi)(\gamma_H(-t))) -\overline{a}
&=
v_a(\phi(it))-\overline{a}
%\\ &
=B(\gamma_\Omega(t))-a_-
\quad\text{for}\ t\leq -t_1 ,
\end{aligned}
\end{equation}
respectively.

To construct the map~$\phi$, put $y_a=\max\{1,-\log|\im a|\}$ and define
\begin{equation} \label{40n}
q_a\colon [y_a,\infty)\to\R,
\quad
q_a(y) = \log\!\left(2|\im a|e^y -1 \right).
\end{equation}
Then
\begin{equation} \label{40o}
\begin{aligned}
q_a(y)
%\\ &
=y+\log (2|\im a|) +o(1)
\quad\text{as}\ y\to +\infty
\end{aligned}
\end{equation}
and
\begin{equation} \label{40p}
q_a'(y) =1+o(1) \quad\text{as}\ y\to +\infty.
\end{equation}
Thus there exists a diffeomorphism $Q_a\colon \R\to\R$ such that
\begin{equation} \label{40q}
Q_a(y)=
\begin{cases}
q_a(y) & \text{if}\ y\geq y_a ,\\
-q_a(-y) & \text{if}\ y\leq -y_a .
\end{cases}
\end{equation}
We now define $\phi=\phi_a\colon H\to  H$,
\begin{equation} \label{40r}
\phi_a(x+iy)=
\begin{cases}
x+iy+i(1-x)(Q_a(y) -y)  & \text{if}\ 0< x\leq 1 ,\\
x+iy                              & \text{if}\ x>1 .
\end{cases}
\end{equation}
Thus $\phi_a(iy)=iQ_a(y)$.
A computation analogous to~\eqref{10p} shows together with~\eqref{40o}
and~\eqref{40p} that $\phi_a$ is quasiconformal.

We now define $T_{a}\colon H\to \C$,
\begin{equation} \label{40t}
T_a(z)=v_a(\phi_{a}(z)).
\end{equation}
\begin{definition}\label{def-tan}
An element $(D,F)$ is of type $\Tan_a$ if there exist compact
sets $K$ and $K'$ such that $(D\setminus K,F)\sim (H\setminus K',T_a)$.
\end{definition}
Of course, if $(D,F)$ is of type $\Tan_a$, then also 
 $(D\setminus K,F)\sim (H\setminus K'',v_a)$ for some compact set~$K''$.

\begin{lemma}\label{lemma-Ta}
Let $d_\pm=(a_\pm,\theta_\pm)$, $(D,F)$, $B$ and $t_0$ be as
in Proposition~\ref{lemma-broom3} and let $T_a$ be defined by~\eqref{40t}.
Let $t_1=\max\{t_0,y_a\}$.
\begin{itemize}
\item[$(i)$]
If $d_+=(a,-\sign(\im a)\pi/2)$, then
\begin{equation} \label{40m3}
T_a(\gamma_H(-t))) -a =B(\gamma_\Omega(t))-a_+
\quad\text{for}\ t\geq t_1 .
\end{equation}
\item[$(ii)$]
If $d_-=(\overline{a},\sign(\im a)\pi/2)$, then
\begin{equation} \label{40m4}
T_a(\gamma_H(-t))) -\overline{a}
=B(\gamma_\Omega(t))-a_-
\quad\text{for}\ t\leq -t_1 .
\end{equation}
\end{itemize}
\end{lemma}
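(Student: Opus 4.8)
The plan is to prove Lemma~\ref{lemma-Ta} by a direct computation, unwinding the definitions of $T_a$, $\phi_a$, $Q_a$, $q_a$ and $v_a$ and comparing the result with the boundary formula~\eqref{10x} for~$B$. No estimate is needed: the maps $\phi_a$ and $Q_a$ were built precisely so that $v_a\circ\phi_a$ attains on the imaginary axis exactly the values appearing on the right-hand sides of~\eqref{40m3} and~\eqref{40m4}. First I would record that $\gamma_H(-t)=it$ by the definition of $\gamma_H$, and that $\phi_a(iy)=iQ_a(y)$ by the remark following~\eqref{40r}; hence
\[
T_a(\gamma_H(-t))=v_a(\phi_a(it))=v_a(iQ_a(t)),
\]
so everything reduces to evaluating $v_a$ on the imaginary axis, which is what~\eqref{40l} and~\eqref{40m} provide.

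For part~$(i)$, take $t\ge t_1$; then $t\ge y_a$, so $Q_a(t)=q_a(t)=\log(2|\im a|e^t-1)$ by~\eqref{40q} and~\eqref{40n}, and in particular $e^{Q_a(t)}+1=2|\im a|e^t$. Substituting into~\eqref{40l},
\[
v_a(iQ_a(t))=a-i\,\frac{2\im a}{e^{Q_a(t)}+1}=a-i\,\frac{\im a}{|\im a|}\,e^{-t}=a-i\sign(\im a)\,e^{-t}.
\]
Since $-i\sign(\im a)=e^{-i\sign(\im a)\pi/2}$, and the hypothesis $d_+=(a,-\sign(\im a)\pi/2)$ means precisely $a_+=a$ and $\theta_+=-\sign(\im a)\pi/2$, formula~\eqref{10x} (applicable as $t\ge t_1\ge t_0$) gives $B(\gamma_\Omega(t))-a_+=e^{i\theta_+}e^{-t}=e^{-i\sign(\im a)\pi/2}e^{-t}$; comparing the two displays yields~\eqref{40m3}.

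Part~$(ii)$ is parallel: for $t\le-t_1$ one has $Q_a(t)=-q_a(-t)$ by~\eqref{40q}, so $e^{-Q_a(t)}+1=e^{q_a(-t)}+1=2|\im a|e^{-t}$, and~\eqref{40m} gives $v_a(iQ_a(t))=\overline{a}+i\sign(\im a)e^{t}$; since $i\sign(\im a)=e^{i\sign(\im a)\pi/2}$, since $d_-=(\overline{a},\sign(\im a)\pi/2)$ means $a_-=\overline{a}$ and $\theta_-=\sign(\im a)\pi/2$, and since $|t|=-t$, formula~\eqref{10x} gives $B(\gamma_\Omega(t))-a_-=e^{i\theta_-}e^{-|t|}=e^{i\sign(\im a)\pi/2}e^{t}$, which matches. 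The only point needing care, and hence the main obstacle, is the sign bookkeeping: checking that the angle $\theta_\pm$ recorded by the $\Broom$-element matches the direction $\mp\sign(\im a)\pi/2$ in which $v_a\circ\phi_a$ approaches its asymptotic values, and that both cases $\im a>0$ and $\im a<0$ are handled uniformly through the factor $\sign(\im a)$, which is exactly why $q_a$ and $Q_a$ were set up with $2|\im a|$ rather than $2\im a$. There is no analytic difficulty; this lemma is the exact-matching statement that lets the subsequent gluing of $(H,T_a)$ to $(\Omega\setminus K',B)$ produce a topologically holomorphic map with no further adjustment.
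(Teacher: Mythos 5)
Your proposal is correct and follows essentially the same route as the paper: a direct unwinding of the definitions, computing $T_a(\gamma_H(-t))=v_a(iQ_a(t))=v_a(iq_a(t))$ via~\eqref{40r}, \eqref{40q}, \eqref{40n} and~\eqref{40l}, and matching the result with~\eqref{10x}. The paper carries out exactly this computation for part~$(i)$ and declares part~$(ii)$ analogous, which you have simply written out in full.
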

\begin{proof}
To prove $(i)$ we note that if $t\geq t_1$, then
by~\eqref{40t}, \eqref{40r}, \eqref{40q}, \eqref{40l}, \eqref{40n} and  \eqref{10x}
we have
\begin{equation} \label{40u1}
\begin{aligned}
T_a(\gamma_H(-t)) -a
&
= T_a(it) -a
= (v_a\circ \phi_a)(it) -a
= v_a(i Q_a(t)) -a
\\ &
= v_a(i q_a(t)) -a
=- i\frac{2\im a}{\exp{q_a(t)}+1}
%\\ &
=   -i\sign(\im a)\exp(-t)
\\ &
=  e^{-i\sign(\im a)\pi/2}\exp(-t)
%\\ &
= B(\gamma_\Omega(t)) - a_+ .
\end{aligned}
\end{equation}
The proof of $(ii)$ is analogous.
\end{proof}
Finally we note that because $T_a$ is conformal except in the 
vertical strip $\{z\colon 0<\re z<1\}$, we have
\begin{equation}\label{l_Ta}
\logarea\!\left( \supp\!\left(\mu_{T_a}\right)\cap \Delta \right) <\infty.
\end{equation}

\section{Beginning of the proof of Theorem~\ref{theorem1}: Cutting into pieces} \label{section8}
Let $F$ be a symmetric local homeo\-morphism of class $S$ with a finite number $m$ of
singularities of $F^{-1}$ over $\C^*$.

Suppose first that $m=0$. Then $F\colon\C\to\C^*$ is 
a covering. This leads easily to the following result.
\begin{lemma}\label{lemma-m0}
Let $F\colon \C\to\bC$ be a local homeo\-morphism
such that the inverse $F^{-1}$ has no  singularities
over points in~$\C^*$. Then $(\C,F)\sim (\C,\exp)$. 
\end{lemma}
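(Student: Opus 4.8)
The plan is to unwind the hypothesis that $F\colon\C\to\C^*$ is a covering map, using the fact that $\C^*$ has fundamental group $\Z$ and that the universal covering of $\C^*$ is the exponential map $\exp\colon\C\to\C^*$. First I would observe that $F$ being a local homeomorphism whose inverse has no singularities over any point of $\C^*$ means precisely that for every $a\in\C^*$ and every sufficiently small disk $D\ni a$ (with $\overline D\subset\C^*$), every component of $F^{-1}(D)$ is mapped homeomorphically onto $D$; this is the definition of a covering map $F\colon\C\to\C^*$ in the topological sense, and this reformulation is exactly what the statement provides once one notes $F$ omits $0$ and $\infty$ (it has no zeros, poles, or singularities over them either, since $0,\infty\notin\C^*$ but more to the point the map lands in $\C^*$).

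Next I would invoke the standard theory of covering spaces. Since $\C$ is simply connected, $F\colon\C\to\C^*$ is the universal covering of $\C^*$. The exponential map $\exp\colon\C\to\C^*$ is also a universal covering of $\C^*$. By uniqueness of universal coverings, there is a homeomorphism $\phi\colon\C\to\C$ with $\exp = F\circ\phi$, equivalently $F = \exp\circ\phi^{-1}$; setting $\psi:=\phi^{-1}$ we get $F = \exp\circ\psi$ with $\psi\colon\C\to\C$ a homeomorphism. Concretely, pick base points: choose $z_0\in\C$ with $F(z_0)=1$ and lift via the covering-lifting criterion the map $\exp$ (which sends $0\mapsto 1$) to a continuous $\phi\colon\C\to\C$ through $F$, with $\phi(0)=z_0$; do the same in the other direction to get the inverse, using that $\phi$ maps fibers of $\exp$ to fibers of $F$ bijectively because both coverings have deck group $\Z$. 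This gives exactly $(\C,F)\sim(\C,\exp)$ in the sense of equivalence of elements defined in \S\ref{topo-hol}, since $\phi$ is an orientation-preserving homeomorphism (orientation-preserving because $F$ and $\exp$ are both topologically holomorphic, hence orientation-preserving, so $\phi = F^{-1}\circ\exp$ locally is a composition of orientation-preserving maps).

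I would then note that there is essentially nothing else to prove: the content is entirely the identification of $F$ with a universal covering and the appeal to uniqueness of universal coverings. The only point requiring a sentence of care is orientation: the definition of equivalence of elements in the excerpt requires the conjugating homeomorphism to be orientation-preserving. Since any topologically holomorphic map is orientation-preserving (locally of the form $z\mapsto z^n$), and $\phi$ can be written locally as $(F\,|_U)^{-1}\circ\exp$ on small sets where $F$ is a homeomorphism onto its image, $\phi$ is a local orientation-preserving homeomorphism, hence globally orientation-preserving.

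The main (and only real) obstacle is simply making sure the covering-space machinery is applied cleanly: verifying that $F\colon\C\to\C^*$ genuinely is a covering map (as opposed to merely a local homeomorphism), which follows from the hypothesis that $F^{-1}$ has no singularities over points of $\C^*$ together with the fact that a proper-over-small-disks local homeomorphism with no algebraic or transcendental singularities is an honest covering. Once that is in hand, the rest is the textbook statement that two universal coverings of the same space are equivalent, and no computation is needed.
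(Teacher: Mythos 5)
Your proposal is essentially the paper's own argument: the absence of singularities of $F^{-1}$ over $\C^*$ makes $F\colon\C\to\C^*$ a covering map, and since $\C$ is simply connected, uniqueness of the universal covering of $\C^*$ (whose standard model is $\exp$) yields $(\C,F)\sim(\C,\exp)$ — the paper states exactly this in one line. The single point you pass over rather quickly, namely why $F$ actually omits $0$ and $\infty$ so that it maps into $\C^*$, is glossed over in the paper as well (there it is harmless because of the standing hypotheses on $F$ in section~\ref{section8}, which exclude maps such as injective local homeomorphisms that would otherwise satisfy the stated hypotheses without being equivalent to $\exp$).
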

Since the inverse of the exponential function has only one
singularity over $0$ and~$\infty$, 
while our hypothesis says that $F^{-1}$ has infinitely many singularities
over $0$ and~$\infty$, we deduce that the case $m=0$ does not occur.
Thus $m\geq 1$.

Next we show that $F^{-1}$ actually has infinitely many singularities over
both~$0$ and~$\infty$. To do so, we may assume 
without loss of generality that $F^{-1}$ has infinitely many singularities over~$0$. 
Lemma~\ref{lemma-lindeloef} yields that $F$ is unbounded 
in the region ``between'' two tracts over~$0$. Since all poles are real, this means that 
with at most two exceptions the region between two tracts over $0$ must contain
a tract over~$\infty$.

To each asymptotic value
$a\in \C^*$ we associate a semi-open segment
$\ell_a=(a,a+\varepsilon e^{i\theta(a)}]$.
Here $\varepsilon>0$ is chosen so small that the
disks $D(a,2\varepsilon)$ are disjoint and do not contain~$0$.
The choice of $\theta(a)\in\{0,\pm\pi/2,\pi\}$ will be fixed later.

For an asymptotic value~$a\in \C^*$, there exists at least one
unbounded component $U$ of $F^{-1}(D(a,2\varepsilon))$.
For each such component~$U$, the map
$F\colon U\to D(a,2\varepsilon)\setminus\{ a\}$ is a universal cover.
Hence $U$ contains  a component $\gamma_U$ of $F^{-1}(\ell_a)$.
Clearly $\gamma_U$ is a curve tending to $\infty$ in~$U$.
For each component $U$ we fix such a curve~$\gamma_U$.
So overall there are $m$ such curves and they are disjoint.
If $U$ is symmetric, then we may choose $\gamma_U$ as a subset of~$\R$.
Then $\theta(a)\in\{0,\pi\}$.
If symmetry interchanges two components, then we choose the corresponding
angles and curves such that $\theta(\overline{a})=-\theta(a)$ 
and $\gamma_{\overline{U}} =\overline{\gamma_U}$.

If $F$ has a two-sided sequence of zeros and poles,
then no such curve $\gamma_U$ can be contained in~$\R$.
Hence the set of these curves splits into complex conjugate pairs.
It follows that the number $m$ must be even in this case.

Since the curves $\gamma_U$ tend to $\infty$, there is
a cyclic order on the set of these curves.
We enumerate them counterclockwise.
This enumeration is independent of the choice of the angles~$\theta(a)$.
If there are infinitely many positive zeros and poles, we do it in
such a way that the positive ray is between $\gamma_m$ and $\gamma_1$.
Let $a_j$ be the asymptotic value along $\gamma_j$. By symmetry,
we have $a_j=\overline{a_{m-j+1}}$
and $\gamma_j=\overline{\gamma_{m-j+1}}$ for $1\leq j\leq m$.
We put $\gamma_0=\gamma_m$ and $a_0=a_m$.

\begin{figure}[!ht]
\captionsetup{width=.9\textwidth}
\def\svgwidth{\textwidth}
%% Creator: Inkscape 1.1.1 (3bf5ae0d25, 2021-09-20), www.inkscape.org
%% PDF/EPS/PS + LaTeX output extension by Johan Engelen, 2010
%% Accompanies image file '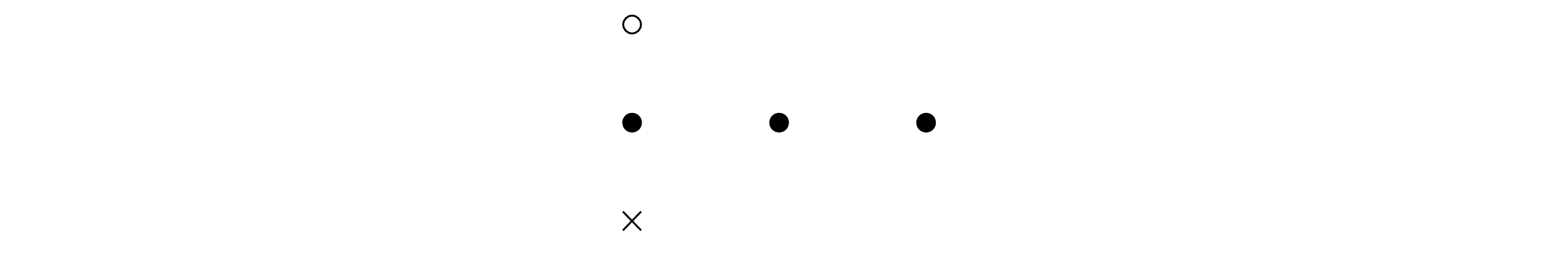' (pdf, eps, ps)
%%
%% To include the image in your LaTeX document, write
%%   \input{<filename>.pdf_tex}
%%  instead of
%%   \includegraphics{<filename>.pdf}
%% To scale the image, write
%%   \def\svgwidth{<desired width>}
%%   \input{<filename>.pdf_tex}
%%  instead of
%%   \includegraphics[width=<desired width>]{<filename>.pdf}
%%
%% Images with a different path to the parent latex file can
%% be accessed with the `import' package (which may need to be
%% installed) using
%%   \usepackage{import}
%% in the preamble, and then including the image with
%%   \import{<path to file>}{<filename>.pdf_tex}
%% Alternatively, one can specify
%%   \graphicspath{{<path to file>/}}
%% 
%% For more information, please see info/svg-inkscape on CTAN:
%%   http://tug.ctan.org/tex-archive/info/svg-inkscape
%%
\begingroup%
  \makeatletter%
  \providecommand\color[2][]{%
    \errmessage{(Inkscape) Color is used for the text in Inkscape, but the package 'color.sty' is not loaded}%
    \renewcommand\color[2][]{}%
  }%
  \providecommand\transparent[1]{%
    \errmessage{(Inkscape) Transparency is used (non-zero) for the text in Inkscape, but the package 'transparent.sty' is not loaded}%
    \renewcommand\transparent[1]{}%
  }%
  \providecommand\rotatebox[2]{#2}%
  \newcommand*\fsize{\dimexpr\f@size pt\relax}%
  \newcommand*\lineheight[1]{\fontsize{\fsize}{#1\fsize}\selectfont}%
  \ifx\svgwidth\undefined%
    \setlength{\unitlength}{1200bp}%
    \ifx\svgscale\undefined%
      \relax%
    \else%
      \setlength{\unitlength}{\unitlength * \real{\svgscale}}%
    \fi%
  \else%
    \setlength{\unitlength}{\svgwidth}%
  \fi%
  \global\let\svgwidth\undefined%
  \global\let\svgscale\undefined%
  \makeatother%
  \begin{picture}(1,0.16875)%
    \lineheight{1}%
    \setlength\tabcolsep{0pt}%
    \put(0,0){\includegraphics[width=\unitlength,page=1]{fig3.pdf}}%
    \put(0.39611815,0.0610547){\color[rgb]{0,0,0}\makebox(0,0)[lt]{\lineheight{1.25}\smash{\begin{tabular}[t]{l}$0$\end{tabular}}}}%
    \put(0.40236818,0.12355469){\color[rgb]{0,0,0}\makebox(0,0)[lt]{\lineheight{1.25}\smash{\begin{tabular}[t]{l}$i$\end{tabular}}}}%
    \put(0.39299318,0.0016797){\color[rgb]{0,0,0}\makebox(0,0)[lt]{\lineheight{1.25}\smash{\begin{tabular}[t]{l}$-i$\end{tabular}}}}%
    \put(0.48986815,0.06417969){\color[rgb]{0,0,0}\makebox(0,0)[lt]{\lineheight{1.25}\smash{\begin{tabular}[t]{l}$a$\end{tabular}}}}%
    \put(0.58049314,0.06417969){\color[rgb]{0,0,0}\makebox(0,0)[lt]{\lineheight{1.25}\smash{\begin{tabular}[t]{l}$\infty$\end{tabular}}}}%
    \put(0,0){\includegraphics[width=\unitlength,page=2]{fig3.pdf}}%
  \end{picture}%
\endgroup%

\caption{The symmetric cell decomposition $C_0$ in the proof of Lemma~\ref{lemma1-a}.\label{fig:symmetric_cell_decomposition}}
\end{figure}

\begin{lemma}\label{lemma1-a}
If $F$ has infinitely many positive zeros and poles, then $a_1\neq a_m$
so that $a_1=\overline{a_m}\notin\R$ and $m\geq 2$.

If, in addition, $F$ has infinitely many negative zeros and poles, then 
$m$ is even, $a_{m/2}\neq a_{m/2+1}$ and $a_{m/2} =\overline{a_{m/2+1}}\notin\R$.
\end{lemma}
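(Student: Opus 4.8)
The plan is to derive the second assertion from the first and to prove the first by contradiction. For the reduction, apply the first assertion to $\widetilde F(z):=F(-z)$, which is again a symmetric local homeomorphism satisfying the hypotheses, has distinguished curves $-\gamma_1,\dots,-\gamma_m$ (with asymptotic values $a_1,\dots,a_m$ attached), and has infinitely many positive zeros and poles precisely when $F$ has infinitely many negative ones. Since $z\mapsto-z$ is a rotation it preserves the counterclockwise cyclic order and the relation $a_j=\overline{a_{m-j+1}}$; because $m$ is even (already shown), the negative ray of $F$, which lies in $G_{m/2}$ between $\gamma_{m/2}$ and $\gamma_{m/2+1}$, becomes the positive ray of $\widetilde F$, so the statement ``$a_1\ne a_m$'' for $\widetilde F$ reads ``$a_{m/2}\ne a_{m/2+1}$'' for $F$; with $a_{m/2}=\overline{a_{m/2+1}}$ this gives $a_{m/2}\notin\R$.

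Now suppose $F$ has infinitely many positive zeros and poles but $a_1=a_m$ (this includes the case $m=1$). By symmetry $a_1=\overline{a_m}$, so the common value $a:=a_1=a_m$ is real, and $\theta(a)\in\{0,\pi\}$, so the segment $\ell_a$ is real. All positive zeros and poles of $F$ lie in the region $G_0$ bounded by $\gamma_0=\gamma_m$ and $\gamma_1$; this region is symmetric (as $\overline{\gamma_1}=\gamma_m$), contains $\R^{+}$, and $F$ maps the tails of both $\gamma_1$ and $\gamma_m$ onto $\ell_a$, so $F\to a$ along both boundary curves of $G_0$ as they tend to $\infty$, while $F$ takes the values $0$ and $\infty$ infinitely often on $\R^{+}\subset G_0$. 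The goal is to show that this forces $F(x)\to a$ as $x\to+\infty$ along $\R$, which is impossible.

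To establish this I would analyse $F$ near $\R^{+}$ by means of a symmetric cell decomposition $C_0$ of $\bC$ (Figure~\ref{fig:symmetric_cell_decomposition}) with distinguished faces containing $0$, $a$ and $\infty$, so arranged that the face containing $a$ is adjacent only to the faces containing $0$ and $\infty$. Since $0$, $a$, $\infty$ are three collinear real singular values, no globally symmetric cell decomposition exists, so one uses a symmetric sub-decomposition of a non-symmetric $C$ (cf.\ \S\ref{section5}), if necessary adjoining a conjugate pair of auxiliary non-asymptotic singular values such as $\pm i$ in order to separate the three real values by a symmetric graph; this only refines the decomposition and does not affect that $F^{-1}(C_0)$ is a line complex. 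The portion of $L_F=F^{-1}(C_0)$ lying over a symmetric neighbourhood of the arc from $0$ to $\infty$ through $a$ is then symmetric; along $\R^{+}$ it is a chain of digons labelled $0$ around the positive zeros of $F$ and digons labelled $\infty$ around the positive poles, whose two ends at $\infty$ are the $\infty$-gons, i.e.\ the logarithmic tracts over $a$, containing $\gamma_1$ and $\gamma_m$. The hypothesis $a_1=a_m$, together with $\overline{\gamma_1}=\gamma_m$, forces these two end-tracts to be complex conjugates of one another, so the chain is finite; equivalently a half-neighbourhood of $\R^{+}$ eventually lies in a single tract over $a$, whence $F(x)\to a$ — the desired contradiction.

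I expect the main obstacle to be precisely this combinatorial bookkeeping near $\R^{+}$: one must verify that $C_0$ can be chosen so that a neighbourhood of the relevant part of $\R^{+}$ is mapped by $F$ into the union of the three faces containing $0$, $a$, $\infty$ (so that the non-symmetric part of $C_0$ is irrelevant there), that the positive zeros and poles of $F$ correspond exactly to the digons described, and — the delicate point — that the chain cannot continue past the tracts containing $\gamma_1$ and $\gamma_m$ into the rest of the plane, so that its ends are forced to be exactly those tracts and hence conjugate. One is tempted to argue more directly via Lindelöf's Lemma~\ref{lemma-lindeloef} in the sector $G_0$ (if $F$ were bounded near $\infty$ there, $F\to a$ there, contradicting the oscillation on $\R^{+}$), but because $F$ has poles in the interior of $G_0$ rather than only on its boundary, ruling out the unbounded alternative seems to need exactly the structural information that the line complex provides; so the cell-decomposition route appears to be the natural one.
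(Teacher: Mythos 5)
Your reduction of the second assertion to the first by passing to $F(-z)$ is exactly the paper's, and your general framework (pull back a cell decomposition whose faces contain $0$, $a$, $\infty$ and examine the faces of the preimage complex along the positive ray) is also the paper's. The proof fails, however, at the step that is supposed to deliver the contradiction. You argue that, since $\gamma_m=\overline{\gamma_1}$, the two tracts over $a$ at the ends of your chain are complex conjugates of one another, ``so the chain is finite''. That inference is a non sequitur: the conjugacy of these two flanking tracts follows from the symmetry of $F$ alone and holds equally in the non-degenerate case $a_1=\overline{a_m}\notin\R$, where the resulting configuration is perfectly consistent (it is realized by the tangent-type elements of \S\ref{inf-one} and in Theorem~\ref{theorem2}$(ii')$); so nothing about the hypothesis $a_1=a_m$ has actually been used to produce an absurdity. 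Moreover, your description of the chain is wrong in precisely the respect that matters: since $F$ is real and locally univalent, it is strictly monotone between consecutive real poles, so when $a$ is real every interval between consecutive positive poles contains exactly one zero \emph{and} exactly one $a$-point. Hence the faces met along $\R^{+}$ carry the labels $0$, $a$, $\infty$ in a pattern of period three, not just $0$ and $\infty$ alternately; the extra $a$-labelled faces you omit are the source of the contradiction.

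The missing idea is a local valence count. One can take the cell decomposition $C_0$ symmetric with two vertices $\pm i$ (interchanged by conjugation), three edges and three digonal faces containing $0$, $a$, $\infty$ (Figure~\ref{fig:symmetric_cell_decomposition}); no auxiliary singular values are needed. Because $F$ is a local homeomorphism, exactly three edges of $L_0=F^{-1}(C_0)$ meet at each of its vertices. Let $x_0<x_1<x_2<\dots$ be the poles, zeros and $a$-points on $\R^{+}$ (appearing, by the monotonicity above, cyclically with period three) and let $X_k$ be the face of $L_0$ containing $x_k$. Since $F^{-1}$ has only $m$ singularities over $\C^{*}$, at most $m$ of the $X_k$ can contain a tract over an asymptotic value; for all other $k$ the restriction of $F$ to $X_k$ is univalent onto its image face, so $X_k$ is a digon, and this holds for all large $k$. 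Consecutive faces $X_k,X_{k+1}$ share an edge, and since they are digons all their boundary edges join the same pair of vertices; running this for $X_K,X_{K+1},X_{K+2}$ produces more edges at a single vertex than the three that are available — the contradiction. This is where the reality of $a$ enters (it forces the period-three pattern and hence three distinct labels among three consecutive digons), and it explains why the period-two situation with $a$ non-real survives, i.e.\ why the lemma is sharp. Your proposal contains neither this count nor any substitute for it, so as it stands the first (and main) assertion of Lemma~\ref{lemma1-a} is not proved.
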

\begin{proof}
Suppose that $a_1=a_m$ so that $a:=a_1=\overline{a_m}=\overline{a_1}\in\R$.
Consider a symmetric cell decomposition $C_0$ of the sphere $\bC$
with two vertices $\times =i$ and $\circ =-i$, three edges and three faces
containing $0$, $\infty$ and~$a$, respectively; 
see Figure~\ref{fig:symmetric_cell_decomposition}. 
We may assume that the edges do not intersect any of the disks $D(a_j,2\varepsilon)$.
Thus all these disks are contained in some face of~$C_0$.
Let $L_0=F^{-1}(C_0)$.

Since $F$ is real and locally univalent, $F$ is either always decreasing
or always increasing between two adjacent poles. This implies that if $x_0$ is 
the smallest positive pole and if $(x_k)$ denotes the sequence of all poles,
zeros and $a$-points greater than or equal to~$x_0$, ordered such that
$x_0<x_1<x_2<\dots$, then $x_k$ is a pole if 
$k$ is divisible by~$3$, and either $x_k$ is a zero if $k\equiv 1\pmod{3}$ and
an $a$-point if $k\equiv 2\pmod{3}$, or vice versa.

Let $X_k$ be the face of $L_0$ containing $x_k$ and let $Y_k=F(X_k)$ be the 
corresponding face of $C_0$.
Suppose that  $F$ is not a covering from $X_k\setminus\{x_k\}$ to $Y_k\setminus \{F(x_k)\}$.
Then there exists $j\in\{1,\dots,m\}$ such that
$\gamma_j \subset X_k$ and $D(a_j,2\varepsilon)\subset Y_k$.
This can happen for at most $m$ values of $k$. Thus there exists $K\in\N$ such that 
$F$ is a  covering from $X_k\setminus\{x_k\}$ to $Y_k\setminus\{F(x_k)\}$,
with $F(x_k)\in\{0,a,\infty\}$.
It follows that $F$ maps $X_k$ univalently to~$Y_k$.
This implies $X_k$ is a digon for all $k\geq K$.
Now $X_K$ shares an edge with $X_{K+1}$, and these two edges end at the same vertices.
Next, $X_{K+1}$ shares an edge with $X_{K+2}$, and again these two edges end at the same vertices.
We conclude that the edges of $X_K$, $X_{K+1}$ and $X_{K+2}$ end all at the same vertices.
This is a contradiction, since only three edges can meet at a vertex.

The conclusion that $a_{m/2}\neq a_{m/2}+1$ if $F$ has infinitely many negative
zeros and poles follows by considering $F(-z)$ instead of $F(z)$.
\end{proof}

Let $\theta_j=\theta(a_j)$.
Lemma~\ref{lemma1-a} says that if $F$ has infinitely many positive zeros
and poles, then $\im a_1=-\im a_m\neq 0$.
In order to apply Lemma~\ref{lemma-Ta}, we choose $\theta_1=-\sign(\im a_1)\pi/2$
and $\theta_m=-\theta_1$.
Similarly, if $F$ has infinitely many negative zeros
and poles, then we choose $\theta_{m/2}=-\sign(\im a_{m/2})\pi/2$ 
and $\theta_{m/2+1}=-\theta_{m/2}$.
If $\gamma_j$ is contained in the real axis, 
then, as already mentioned, we have $\theta_j=0$ or $\theta_j=\pi$.
For all other $j$ the choice of $\theta_j$ is irrelevant, but to be definite
we choose $\theta_j=0$ for these~$j$.
We put $d_j=(a_j,\theta_j)$.

We connect the endpoints of $\gamma_j$ to some point in $\R$ by 
curves $\sigma_j$ which are pairwise disjoint except for their common
endpoint in $\R$. Moreover, we assume that $\sigma_j$ intersects $\gamma_j$
only at its
endpoint, and does not intersect any other~$\gamma_k$.
Then, for $1\leq j\leq m$, there exists an unbounded domain $G_j$ 
whose boundary is formed by the curves 
$\gamma_{j-1}$, $\gamma_j$, $\sigma_{j-1}$ and~$\sigma_j$.

The proof of Theorem~\ref{theorem1} is split into two parts: 
The first part (given in this section) is a purely
topological statement, and the second part (in the next section)
deals with the asymptotic behavior.

The topological statement is the following.

\begin{theorem}\label{theorem2}
Let $F\colon \C\to\bC$ be a symmetric local homeo\-morphism
such that the inverse $F^{-1}$ has a finite, non-zero number $m$ of singularities
over points in~$\C^*$.
Then:
\begin{itemize}
\item[$(i')$]
If $F$ has only finitely many zeros and poles, then 
$(G_j,F)$ is of type $\Broom_{d_j,d_{j+1}}$ for all~$j$.

\item[$(ii')$]
If $F$ has infinitely many zeros and poles, all of them
positive, then $m\geq 2$ and $(G_0,F)$ is of type $\Tan_{a_1}$
while $(G_j,F)$ is of type $\Broom_{d_j,d_{j+1}}$ for $1\leq j\leq m-1$.

\item[$(iii')$]
If $F$ has a two-sided sequence of zeros and poles, all of them real,
then $m$ is even and $m\geq 2$. Moreover,
$(G_0,F)$ is of type $\Tan_{a_1}$ and $(G_{m/2},F)$ is of type $\Tan_{a_{m/2+1}}$
while $(G_j,F)$ is of type $\Broom_{d_j,d_{j+1}}$ for all other~$j$.
\end{itemize}
\end{theorem}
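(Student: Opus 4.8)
The plan is to decide, piece by piece, which of the two model elements of Definitions~\ref{def-broom} and~\ref{def-tan} is equivalent to $(G_j,F)$, using the combinatorial rigidity supplied by Propositions~\ref{proposition1} and~\ref{proposition2}. I begin with the structural preliminaries. The asymptotic values of $F$ form a finite subset of $\C^*\cup\{0,\infty\}$ (only $m$ of them lie in $\C^*$), and $F$ has no critical points, so $F$ belongs to the Speiser class and every singularity of $F^{-1}$ is logarithmic. By hypothesis $m\geq 1$; in cases $(ii')$ and $(iii')$, Lemma~\ref{lemma1-a} gives $m\geq 2$, with $m$ even in $(iii')$. Each $G_j$ --- bounded by two consecutive asymptotic curves $\gamma_j,\gamma_{j+1}$, two of the auxiliary arcs $\sigma_k$, and the point $\infty$ --- is an unbounded simply connected Jordan domain, and the $G_j$ together with these curves exhaust the plane: the $\gamma_k$ and $\sigma_k$ form a connected graph on $\bC$ (each $\gamma_k$ runs from $\infty$ to a finite point $q_k$, and $\sigma_k$ joins $q_k$ to the common point on $\R$), whose $m$ complementary faces are precisely the $G_j$. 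Finally, in cases $(ii')$ and $(iii')$ the zeros and poles of $F$ are all real, hence form a discrete subset of $\R$ accumulating only at $\pm\infty$; by the chosen enumeration the positive reals eventually lie in $G_0$ and, in case $(iii')$, the negative reals eventually lie in $G_{m/2}$. So every $G_j$ contains only finitely many zeros and poles, except for $G_0$ in case $(ii')$ and for $G_0$ and $G_{m/2}$ in case $(iii')$.

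Next consider a piece $G_j$ with only finitely many zeros and poles. Choose a compact set $K_j\subset G_j$ containing all of them so that $G_j\setminus K_j$ is again an unbounded simply connected Jordan domain; then $F(G_j\setminus K_j)\subset\C^*$ and $F$ is a local homeomorphism there. The two unbounded ends of $\partial(G_j\setminus K_j)$ are tails of $\gamma_j$ and $\gamma_{j+1}$, and by the construction of these curves $F$ maps each end homeomorphically onto the segment $\ell_{a_j}$, resp.\ $\ell_{a_{j+1}}$, approaching $a_j$, resp.\ $a_{j+1}$, radially in the direction $e^{i\theta_j}$, resp.\ $e^{i\theta_{j+1}}$. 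After reparametrising the boundary, this is exactly condition~$(*)$ of section~\ref{section7}, so $(G_j\setminus K_j,F)$ has the oriented asymptotic values $d_j$ and $d_{j+1}$. For the remaining hypothesis of Definition~\ref{def-broom}, fix $c\in\C^*$ and choose $\delta>0$ so small that $\overline{D(c,\delta)}$ contains no singular value of $F$ other than possibly $c$ itself; then $F\colon F^{-1}(D(c,\delta)\setminus\{c\})\to D(c,\delta)\setminus\{c\}$ is a covering, so any unbounded component $V$ of $F^{-1}(D(c,\delta))$ is a logarithmic tract of $F$ over $c$, whence $c=a_k$ for some $k$ and $V$ eventually contains the chosen asymptotic curve $\gamma_k$. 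Since each $\gamma_k$ lies on $\partial G_j$ or is disjoint from $\overline{G_j}$, no such $V$ has its closure inside $G_j\setminus K_j$. Hence $(G_j,F)$ is of type $\Broom_{d_j,d_{j+1}}$.

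Now consider a piece $G_j$ carrying infinitely many zeros and poles --- namely $G_0$, and in case $(iii')$ also $G_{m/2}$ --- all of which are real and lie on a single half-line contained in $\overline{G_j}$. Here the assertion is an equivalence of elements, so we compare labeled cell decompositions. Fix a cell decomposition $C$ of $\bC$ with vertices $\times,\circ$ and exactly one face around each singular value of $F$ (including $0$, $\infty$ and the two asymptotic values on $\partial G_j$), and pass to the symmetric sub-decomposition adapted to the reflection fixing $G_j$, as in section~\ref{section5} ($C$ itself need not be symmetric). Set $L=F^{-1}(C)$. Along the half-line on which the zeros and poles of $F$ in $G_j$ accumulate, $F$ is real and locally univalent with poles on $\R$, so zeros and poles alternate and $F$ is monotone between consecutive ones; together with the fact that no singularity of $F^{-1}$ accessible from within $G_j$ lies over a value other than the two on $\partial G_j$ --- here one also uses Lemma~\ref{lemma-lindeloef} --- this forces the part of $L$ in $G_j$, outside a suitable compact set, to coincide combinatorially with the line complex on $H$ of the tangent-type map $v_a$ of~\eqref{40k}, where $a=a_1$ for $G_0$ and $a=a_{m/2+1}$ for $G_{m/2}$. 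By Propositions~\ref{proposition1} and~\ref{proposition2} we get $(G_j\setminus K,F)\sim(H\setminus K',v_a)$ for suitable compact $K,K'$, and since $T_a=v_a\circ\phi_a$ also $(G_j\setminus K,F)\sim(H\setminus K'',T_a)$; thus $(G_j,F)$ is of type $\Tan_{a_1}$, resp.\ $\Tan_{a_{m/2+1}}$.

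I expect the main obstacle to be the ladder identification in the previous paragraph: one must read off the exact combinatorics of $L$ over a region that carries infinitely many real zeros and poles but no $\C^*$-singularities, purely from the topological hypotheses, and in particular rule out any extra combinatorial features occurring between consecutive zeros and poles --- this is where the reality of all poles (forcing them onto a single half-line), the finiteness of the $\C^*$-singularities, and Lemma~\ref{lemma-lindeloef} enter decisively --- while the uncontrolled transitional region near the origin has to be absorbed into the compact sets $K,K'$. A secondary, more routine, difficulty is the topological bookkeeping needed to confirm that the $G_j$ are indeed simply connected Jordan domains partitioning the plane, that consecutive asymptotic curves $\gamma_j,\gamma_{j+1}$ are genuinely adjacent, and that the symmetric sub-decomposition of $C$ can be chosen to respect the reflection fixing $G_0$, resp.\ $G_{m/2}$.
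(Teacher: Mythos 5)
Your overall strategy is the same as the paper's (verify Definition~\ref{def-broom} directly on the pieces free of zeros and poles; identify the piece(s) carrying the real zeros and poles with the tangent model via cell decompositions and the rigidity of Proposition~\ref{proposition2}), and your treatment of the Broom pieces is essentially fine. But the heart of the theorem is precisely the step you state in one clause and then defer: you write that alternation of zeros and poles, the absence of $\C^*$-singularities accessible from $G_0$, and Lemma~\ref{lemma-lindeloef} ``force'' the pulled-back decomposition $L$ in $G_0$, off a compact set, to coincide combinatorially with the line complex of $v_a$, and your closing paragraph concedes that establishing exactly this is the main unresolved obstacle. That is a genuine gap, not a routine verification. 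The paper closes it by a concrete combinatorial argument: take the symmetric four-face decomposition $C_1$ with faces around $0$, $\infty$, $a_1$ and $\overline{a_1}$ only (legitimate since Lemma~\ref{lemma1-a} gives $a_1\neq a_m$, so $a_1\notin\R$), with all other disks $D(a_j,2\varepsilon)$ inside the faces labeled $0$ and $\infty$; then, as in the proof of Lemma~\ref{lemma1-a}, at most $m$ faces of $L_1=F^{-1}(C_1)$ can contain one of the tracts $U_j$, every other face containing a large positive zero or pole maps univalently onto its target face minus the singular point and is therefore a digon, so the large positive zeros and poles lie on a one-sided chain of digons labeled $0$ and $\infty$ alternately; the cyclic order of labels at the vertices then forces the two faces flanking the chain to be $\infty$-gons labeled $a_1$ and $\overline{a_1}$, and these must contain $\gamma_1$ and $\gamma_m$. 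Only after this does Proposition~\ref{proposition2} apply, and one still has to cut along extensions of $\gamma_1,\gamma_m$ (using that their $\psi$-preimages lie on a vertical line $\re z=k\pi$) to pass from the chain-plus-$\infty$-gons region to $G_0$ minus a compact set. None of this ``ruling out extra combinatorial features'' is done in your proposal.

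A secondary but real problem is your choice of cell decomposition: you take one face around \emph{each} singular value of $F$ and then ``pass to the symmetric sub-decomposition''. As noted in \S~\ref{section5}, a symmetric cell decomposition exists only when at most two of the singular values are real, and nothing prevents further real asymptotic values $a_j$ (indeed $\theta_j\in\{0,\pi\}$ is allowed for curves $\gamma_j\subset\R$); your proposal does not explain how the sub-decomposition is to be produced or why it has the two-vertex, digon/$\infty$-gon structure needed for the comparison with $v_a^{-1}(C)$. The paper avoids the issue entirely by using only the four singular values $0,\infty,a_1,\overline{a_1}$ and absorbing the remaining singular values into the faces labeled $0$ and $\infty$; some such device is needed for your argument to get started.
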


\begin{proof}
Definition~\ref{def-broom} says that if $F$ has no zeros and 
poles in~$G_j$, then $(G_j,F)$ is of type $\Broom_{d_j,d_{j+1}}$.
This already proves $(i')$ and also shows that in case
$(ii')$ the $(G_j,F)$ are of the stated type if $j\neq 0$ and that in case
$(iii')$ they are of the stated type if $j\neq 0$ and $j\neq m/2$.

To deal with $(G_0,F)$ in cases $(ii')$ and $(iii')$, 
we consider a symmetric cell decomposition $C_1$ of the sphere
with two vertices $\times$ and $\circ$ on the real line, four edges and four faces
containing $0$, $\infty$, $a:=a_1$ and $\overline{a}=a_m$, respectively, see
the left part of Figure~\ref{figure2}.
According to Lemma~\ref{lemma1-a}, we have $a_1\neq a_m$, which justifies this
construction.

\begin{figure}[!ht]
\captionsetup{width=.9\textwidth}
\def\svgwidth{\textwidth}
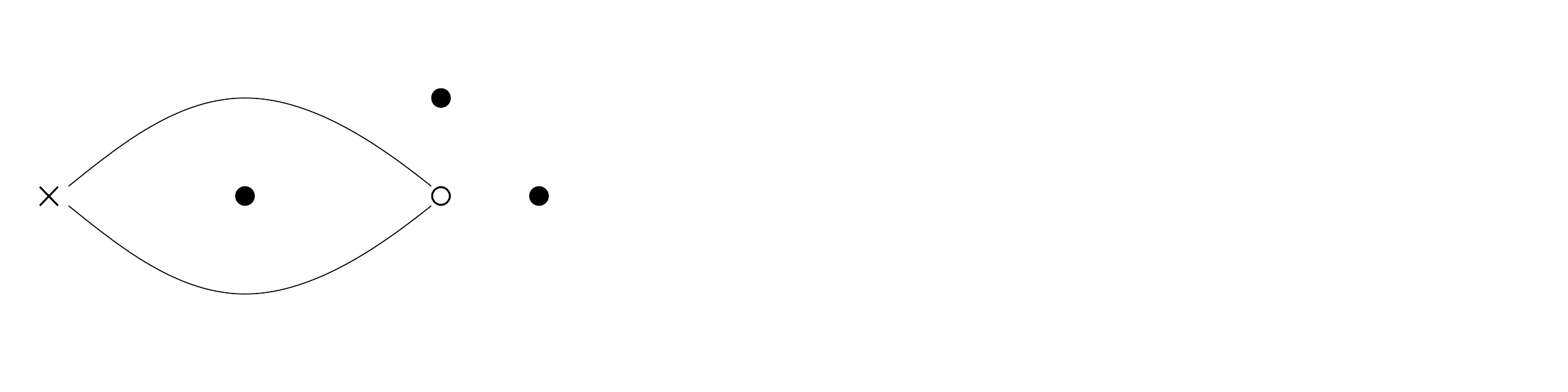
 \caption{A cell decomposition for singular values $0$, $a$, $\overline{a}$ and $\infty$.}
\label{figure2} 
\end{figure}

We may assume that if $a_j$ is an asymptotic value with $a_j\notin\{a,\overline{a}\}$,
then the disk $D(a_j,2\varepsilon)$ is contained in one of the
faces labeled $0$ and~$\infty$. Let $L_1=F^{-1}(C_1)$. Then the restriction of $F$ to
an unbounded face of $L_1$ labeled by
$a$ or $\overline{a}$ is a universal cover from this unbounded face 
to the corresponding face of $C_1$ punctured at $a$ or $\overline{a}$, respectively.

The same argument as in the proof of Lemma~\ref{lemma1-a} shows that 
all sufficiently large positive poles and zeros of $F$
must belong to digons of $L_1$.
Each such digon has both vertices on the positive ray.
Conversely, every sufficiently large positive vertex belongs to two digons, one labeled
$0$ and one labeled~$\infty$.

Thus there is a one-sided chain of the form $\times =\circ =\times =\circ =\times =\dots$ infinite
in the positive direction, consisting of faces labeled $0$ and $\infty$
alternatively.
Consideration of the cyclic order of face labeling around vertices of these digons containing
zeros and poles shows that
 immediately above and below this chain
we must have $\infty$-gons labeled $a$ and $\overline{a}$; cf.\ the right part of Figure~\ref{figure2}.

These $\infty$-gons must contain $\gamma_1$ and $\gamma_m$, since there are no other
asymptotic curves $\gamma_j$ with asymptotic values in $\C^*$
between $\gamma_m$ and $\gamma_1$ in the sense of the counterclockwise
cyclic order at $\infty$. Let $D$ be the region
consisting of these two $\infty$-gons and the closure of digons of the chain.

Recall that $T_{a_1}$ is defined in~\eqref{40t} by $T_{a_1}=v_{a_1}\circ \phi_{a_1}$,
with $v_{a_1}$ given by~\eqref{40k} and a quasiconformal map~$\phi_{a_1}$.
The cell decomposition $v_{a_1}^{-1}(C_1)$ consists of
a two-sided infinite chain of digons the form $\ldots = \times =\circ=\times =\circ= \ldots$
and two $\infty$-gons. Removing the closure of the left part
of this infinite chain, we obtain a region $D'$ with a cell decomposition
combinatorially equivalent to the restriction of the
cell decomposition $L_1$ on~$D$. Therefore, the restriction of $F$ to $D$
is equivalent to the restriction of $v_{a_1}$ to~$D'$,
that is $v_{a_1}=F\circ\psi$, where $\psi\colon  D'\to D$ is a homeo\-morphism.
Since $F$ and $v_{a_1}$ are symmetric, $\psi$ can also be chosen to be symmetric.
Moreover, since $\gamma_m=\overline{\gamma_1}$ and since $F$ maps the curves
$\gamma_1$ and $\gamma_m$ onto the segments 
$(a_1,a_1+\varepsilon e^{i\theta_1}]=(a,a-\sign(\im a)i]$ and 
$(a_m,a_m+\varepsilon e^{i\theta_m}]=(\overline{a},\overline{a}+\sign(\im a)i]$,
we see that there exists $k\in\Z$ such that 
$\psi^{-1}(\gamma_1)$ and $\psi^{-1}(\gamma_m)$ are contained in
the line $\{z\colon \re z=k\pi\}$.
There is no loss of generality to assume that they are on the imaginary axis.

Now we extend the curves $\gamma_1$ and $\gamma_m$ until they hit
some vertices on the boundary of their $\infty$-gons. 
This can be done in such a way that $\psi^{-1}(\gamma_1)$ and $\psi^{-1}(\gamma_m)$ 
are contained in the right half-plane.
These extended curves cut from $D$ a region $D_0$ which contains a positive ray.
The restriction of $F$ on $D_0$ is equivalent to the restriction
of $v_{a_1}$ onto the region $\psi^{-1}(D_0)$, 
and hence the restriction of $T_{a_1}$ onto $\phi^{-1}(\psi^{-1}(D_0))$.
This yields that $(G_0,F)$ is of type~$\Tan_{a_1}$.
This completes the proof of $(ii')$ and also handles the case of $(G_0,F)$ in
case $(iii')$.

To complete the proof in case $(iii')$  we only have to note that 
considering $F(-z)$ instead of $F(z)$ corresponds to interchanging $G_0$ and $G_{m/2}$
as well as $a_1$ and $a_{m/2+1}$.
\end{proof}

\section{Completion of the proof of Theorem~\ref{theorem1}: Gluing pieces together} \label{section9}
Let $(i')-(iii')$ be the cases considered in Theorem~\ref{theorem2}.
Of course, these correspond to the cases 
$(i)-(iii)$ of Theorem~\ref{theorem1}.
We divide the plane into $m$ sectors $S_0,\dots,S_{m-1}$, enumerated counterclockwise
and such that $S_0$ is bisected by the positive real axis.
Let $\sigma_j$ be the opening angle of $S_j$.
We choose $\rho$ and the opening angles $\sigma_j$ as follows:
\begin{enumerate}
\item[]Case $(i')$:
$\rho=m$ and $\sigma_j=2\pi/\rho$ for all $j$.
\item[]Case $(ii')$:
 $\rho=m-1/2$, $\sigma_0=\pi/\rho$ and $\sigma_j=2\pi/\rho$ for $1\leq j\leq m-1$.
\item[]Case $(iii')$:
$\rho=m-1$, $\sigma_0=\sigma_{m/2}=\pi/\rho$ and $\sigma_j=2\pi/\rho$ for 
all other $j$.
\end{enumerate}
Recall here that $m$ is even in case $(iii')$ by Theorem~\ref{theorem2}.
We call a sector $S_j$ \emph{large} if $\sigma_j=2\pi/\rho$ and 
\emph{small} if $\sigma_j=\pi/\rho$.

If $S_j$ is a large sector, then there exists $e_j\in\C$ with $|e_j|=1$ such
that $z\mapsto e_j z^\rho$ maps $S_j$ conformally onto $\Omega^0=\C\setminus [0,\infty)$. In fact, we
have $e_j=-1$ for all $j$ in case $(i')$ and $e_j=-i$ for all $j$ in case $(ii')$.
In case $(iii')$ we have $e_j=-i$ for $1\leq j\leq m/2-1$ and $e_j=i$ for $m/2+1\leq j\leq m-1$.

If $S_0$ is a small sector, then $z\mapsto z^\rho$ maps $S_0$ onto the right half-plane~$H$.
If $S_{m/2}$ is a small sector, which happens only in case $(iii')$, then
$z\mapsto - z^\rho$ maps $S_0$ onto the right half-plane.
Putting $e_0=1$ and $e_{m/2}=-1$ we see that if $S_j$ is a small sector
and thus $j\in\{0,m/2\}$,
then $z\mapsto e_j z^\rho$ maps $S_j$ to~$H$.
Let $p_j\colon S_j\to \C$, $p_j(z)=e_jz^\rho$. Then $p_j(S_j)=\Omega^0$
or $p_j(S_j)=H$, depending on whether $S_j$ is large or small.

Let  $d_j=(a_j,\theta_j)$ be as in section~\ref{section8} and
let $G_0,\dots,G_{m-1}$ be the domains defined before Theorem~\ref{theorem2}.
By Theorem~\ref{theorem2}, each element $(G_j,F)$ is of one of two types.
If it is of type $\Broom_{d_j,d_{j+1}}$,  we choose the map $B_j$ 
and compact sets $K_j$ and $K_j'$ according to Proposition~\ref{lemma-broom3}
so that $(\Omega\setminus K_j',B_j)\sim (G_j\setminus K_j,F)$.
Otherwise there are compact sets $K_j$ and $K_j'$ such that 
so that $(H\setminus K_j',T_{a_{j+1}})\sim (G_j\setminus K_j,F)$.
Note that our labeling of the sectors is such that 
\begin{equation}\label{7b1}
(G_j\setminus K_j, F)\sim 
\begin{cases}
(\Omega\setminus K_j',B_j)  & \text{if}\ S_j \ \text{is large}, \\
(H\setminus K_j',T_{a_{j+1}})      & \text{if}\ S_j \ \text{is small}.
\end{cases}
\end{equation}

We consider the map 
$F_1\colon\bigcup_{j=0}^{m-1} S_j\to \C$,
which for $z\in S_j$ of sufficiently large modulus is  defined by
\begin{equation}\label{7b}
F_1(z)=
\begin{cases}
B_j(p_j(z)) & \text{if}\ S_j \ \text{is large}, \\
T_{a_{j+1}}(p_j(z)) & \text{if}\ S_j \ \text{is small}. 
\end{cases}
\end{equation}
Proposition~\ref{lemma-broom3} and Lemma~\ref{lemma-Ta} yield that, apart
from some bounded set, the expressions defining $F_1$ match on the boundaries of
the sectors. Thus there
exists $R>0$ such that~\eqref{7b} defines a quasiregular map $F_1\colon\{z\colon |z|>R\}\to\C$.

By~\eqref{7b1} there exist homeo\-morphisms $\phi_j\colon p_j(S_j)\setminus K_j'\to G_j\setminus K_j$
such that 
\begin{equation}\label{7c}
F(\phi_j(z))=
\begin{cases}
B_j(z) & \text{if}\ S_j \ \text{is large}, \\
T_{a_{j+1}}(z)     & \text{if}\ S_j \ \text{is small}. 
\end{cases}
\end{equation}
Let $\tau_j=\phi_j\circ p_j$.
Then
\begin{equation}\label{7d}
\begin{aligned}
F(\tau_j(z))
&=
F(\phi_j(p_j(z)))
%\\ &
=
\begin{cases}
B_j(p_j(z))) & \text{if}\ S_j \ \text{is large} \\
T_{a_{j+1}}(p_j(z)))     & \text{if}\ S_j \ \text{is small} 
\end{cases}
\\ &
=F_1(z)
\quad\text{for}\ z\in S_j\setminus p_j^{-1}(K_j)
\end{aligned}
\end{equation}
by~\eqref{7b} and~\eqref{7c}.
Hence the $\tau_j$ can be glued together 
to yield compact sets $K$ and $K'$ and a homeo\-morphism 
$\tau\colon \C\setminus K'\to\C\setminus K$ such that 
\begin{equation}\label{7e}
F(\tau(z))=F_1(z)
\quad\text{for}\ z\in \C\setminus K'.
\end{equation}

On the other hand, as explained in \S~\ref{topo-hol}, the Uniformization 
Theorem yields that there exists $0<R\leq\infty$, a homeo\-morphism 
$\phi_0\colon\C\to D(0,R)$ and a meromorphic function 
$F_0\colon D(0,R)\to\C$ such that $F=F_0\circ \phi_0$. 
With $\alpha=\phi_0\circ \tau$ we thus have 
\begin{equation}\label{7e0}
F_0(\alpha(z))=F_1(z)
\quad\text{for}\ z\in \C\setminus K'.
\end{equation}
Since $F_0$ is meromorphic and $F_1$ is quasiregular, we find that $\alpha$ is quasiconformal.
Since a quasiconformal map distorts the modulus of an annulus only by a bounded factor,
this implies that $R=\infty$.

The set where $\alpha$ is not conformal agrees with the set where $F_1$ is not meromorphic.
By Lemma~\ref{lemma-Aalpha},
\eqref{l_B} and~\eqref{l_Ta} this set has finite logarithmic area.
It thus follows from the Teich\-m\"uller--Wit\-tich--Belinskii theorem (Lemma~\ref{lemma-twb})
that there exists $a\in\C^*$ such that 
\begin{equation}\label{7f}
\alpha(z)\sim a z 
\quad\text{as}\ z\to\infty.
\end{equation}
It will be convenient to consider the inverse $\beta=\alpha^{-1}$. With $b=1/a$ 
we then have 
\begin{equation}\label{7e1}
F_0(z)=F_1(\beta(z))
\quad\text{for}\ z\in \C\setminus K,
\end{equation}
with
\begin{equation}\label{7f1}
\beta(z)\sim b z 
\quad\text{as}\ z\to\infty.
\end{equation}
As $F_0$ and $F_1$ are symmetric, $\beta$ is also symmetric. This implies 
that $b$ is real. In fact, we may assume that $b>0$.

It follows from the definition of the $T_{a_j}$ and $B_j$, 
\eqref{7b}, \eqref{7e1} and~\eqref{7f1} that 
\begin{equation}\label{7f2}
\log m(r,F_0)=\LO(r^\rho)
\end{equation}
as $r\to\infty$.
Moreover, we find that if $F_0$ has infinitely many zeros and poles, then
there exists a positive constant $C$ such that
\begin{equation}\label{7f3}
N(r,F_0)\sim C r^\rho 
\quad\text{and}\quad
N\!\left(r,\frac{1}{F_0}\right)\sim C r^\rho 
\end{equation}
as $r\to\infty$.
In fact, we have $C=b^\rho/(2\pi)$ if the sequence of zeros and poles is one-sided and
$C=b^\rho/\pi$ if it is two-sided.
It follows from these equations and the lemma on the logarithmic
derivative~\cite[Chapter~3, \S~1]{GO} that 
$F_0'/F_0$ and hence $E=F_0/F_0'$ have order~$\rho$. 
Moreover, it follows that $\lambda(E)=\rho$ in cases $(ii)$ and $(iii)$.

To prove that $E$ is of completely regular growth, we first consider a large sector $S_j$.
Let $S_j'$  be a subsector of $S_j$ which is mapped
to a subsector of the left half-plane under~$p_j$.
By~\eqref{12d} we have
\begin{equation}\label{13a}
\begin{aligned}
\log F_0(z)
&= \log B_j(p_j(\beta(z)))) 
% \\ &
\sim  c p_j(\beta(z))^d \exp(-p_j(\beta(z)))
\quad \text{as}\ z\to\infty, \; z\in S_j'
\end{aligned}
\end{equation}
and thus 
\begin{equation}\label{13b}
\log \log F_0(z)
\sim -p_j(\beta(z))
\sim -e_j(bz)^\rho
\quad \text{as}\ z\to\infty, \; z\in S_j' .
\end{equation}
Replacing, without changing notation, $S_j'$ by a smaller subsector we find that
\begin{equation}\label{13c}
\frac{F_0'(z)}{F_0(z)\log F_0(z)}\sim - e_j \rho b^\rho z^{\rho -1}
\quad \text{as}\ z\to\infty, \; z\in S_j' .
\end{equation}
Hence
\begin{equation}\label{13d}
\begin{aligned}
E(z)
&=\frac{F_0(z)}{F_0'(z)}
%\\ &
\sim - \frac{e_j \rho b^\rho z^{\rho -1}}{\log F_0(z)}
%\\ &
\quad\text{as}\ z\to\infty,\ z\in S_j'
\end{aligned}
\end{equation}
so that $\log E(z) \sim -\log\log F_0(z)$ and hence
\begin{equation}\label{7l}
\log E(z) 
\sim e_j(bz)^\rho
\quad\text{as}\ z\to\infty,\ z\in S_j'.
\end{equation}

Let now $S_j$ be a large sector and let $S_j'$ be a subsector which is mapped by $p_j$
to a subsector of the first quadrant $\{z\colon \re z>0,\,\im z>0\}$.
By~\eqref{12c} we have
\begin{equation}\label{13e}
\begin{aligned}
F_0(z)-a_j
&= \log B_j(p_j(\beta(z))))-a_j
%\\ &
\sim  c p_j(\beta(z))^d \exp(-p_j(\beta(z)))
\quad \text{as}\ z\to\infty, \; z\in S_j'
\end{aligned}
\end{equation}
Similarly as above this yields 
\begin{equation}\label{13f}
\log ( F_0(z)-a_j) \sim -p_j(\beta(z))
\sim -e_j(bz)^\rho
\quad \text{as}\ z\to\infty, \; z\in S_j'
\end{equation}
and thus, passing to a smaller subsector,
\begin{equation}\label{13g}
\frac{F_0'(z)}{F_0(z)-a_j} \sim - e_j \rho b^\rho z^{\rho -1}
\quad \text{as}\ z\to\infty, \; z\in S_j' .
\end{equation}
We conclude that 
\begin{equation}\label{13h}
E(z)=\frac{F_0(z)}{F_0'(z)}\sim \frac{a_j}{F_0'(z)}
\sim -\frac{1}{ e_j \rho b^\rho z^{\rho -1} (F_0(z)-a_j)}
\quad \text{as}\ z\to\infty, \; z\in S_j'.
\end{equation}
Thus $\log E(z)\sim -\log ( F_0(z)-a_j)$ and~\eqref{13f} yields that~\eqref{7l}
holds again.

The case that $S_j'$ is mapped by $p_j$
to a subsector of the fourth quadrant $\{z\colon \re z>0,\,\im z<0\}$
 is analogous. Then the above equations hold with $a_j$ 
replaced by $a_{j+1}$, and again we obtain~\eqref{7l}.

Next we consider the case that $S_j$ is a small sector.
Again, let $S_j'$ be a subsector which is mapped by $p_j$
to a subsector of the first quadrant. Then
\begin{equation}\label{7t}
F_0(z)= 
\tan\!\left(\frac{p_j(\beta(z))}{2}\right) \im a_{j+1} +\re a_{j+1}
\quad\text{for}\ z\in S_j' ,
\end{equation}
provided $|z|$ is sufficiently large.
We conclude that
$F_0(z) \to a_{j+1}$ as $z\to\infty$, $z\in S_j'$,
and passing as before without change of notation to a smaller sector,
\begin{equation}\label{7v}
F_0'(z)
=\frac{p_j'(\beta(z)) \beta'(z)}{2\cos^2\!\left(\frac{p_j(\beta(z))}{2}\right)} \im a_{j+1}
\sim \frac{e_j \rho b^\rho z^{\rho-1} \im a_{j+1}}{2\exp(-i p_j(\beta(z)))}    
\quad\text{as}\ z\to\infty,\ z\in S_j' .
\end{equation}
It follows that 
\begin{equation}\label{7w}
E(z)\sim \frac{2 a_{j+1}\exp (-i p_j(\beta(z)) }{e_j \rho b^\rho z^{\rho-1}\im a_{j+1}}
\end{equation}
and hence
\begin{equation}\label{7x}
\log E(z) \sim  - i p_j(\beta(z)) \sim - i e_j (bz)^\rho
\quad\text{as}\ z\to\infty,\ z\in S_j'.
\end{equation}
An analogous argument shows that 
if $S_j'$ is mapped by $p_j$ to a subsector of the fourth quadrant,
then
\begin{equation}\label{7y}
\log E(z) \sim  i p_j(\beta(z)) \sim i e_j(bz)^\rho
\quad\text{as}\ z\to\infty,\ z\in S_j'.
\end{equation}

Suppose now that we are in case $(ii)$. 
Then~\eqref{7l} holds for $1\leq j\leq m-1$.
If $j=0$ and $S_0'$ is a subsector of $S_0$ which is contained in the upper half-plane,
then $p_j(S_0')$ is contained in the first quadrant and thus 
we have~\eqref{7x} with $j=0$.
Recalling that $e_0=1$ and $e_j=-i$ for all other $j$ in case $(ii)$, 
we find that if $T$ is any closed subsector of the upper half-plane
whose image under $z\mapsto z^\rho$ does not intersect the real or imaginary axis, then
\begin{equation}\label{8a}
\log E(z) \sim - i b^\rho z^\rho
\quad\text{as}\ z\to\infty,\ z\in T .
\end{equation}
With $c=b^\rho$ this yields that 
\begin{equation}\label{8b}
\begin{aligned}
\log |E(z)|
& \sim \re\!\left(- i b^\rho r^\rho e^{i\rho t}\right)
%\\ &
= c r^\rho \cos\!\left(\rho t-\frac{\pi}{2}\right)
%\\ &
= c r^\rho \sin(\rho t)
\quad\text{as}\ r\to\infty,\ re^{it}\in T.
\end{aligned}
\end{equation}
Since $E$ is symmetric, an analogous result holds for subsectors of the lower half-plane.
Thus $E$ is of completely regular growth on every ray except for finitely many.
Since the set of rays of completely regular growth is closed~\cite[\S~III.1]{L},
$E$ is of completely regular growth in the plane, with indicator 
as stated.

An analogous reasoning can be made in case $(iii)$.
In this case a subsector of $S_m$ which is contained in the upper half-plane 
is mapped to the fourth quadrant. Thus we have to use~\eqref{7y} instead of~\eqref{7x}
if $j=m/2$. But since $e_m=-1$ we again find that~\eqref{8a} holds for any subsector $T$ 
of the upper half-plane whose image under $z\mapsto z^\rho$ does not intersect
the real or imaginary axis.
As before we can conclude that $E$ has completely regular growth, with indicator 
as stated.

Finally, to prove that $A$ has completely regular growth, 
we note that it follows from~\eqref{8a} that if $T$ is a closed sector
containing no zeros of $E$, then there exists a constant $c'$ such that 
\begin{equation}\label{8e}
-2\frac{E''(z)}{E(z)}+ \left(\frac{E'(z)}{E(z)}\right)^2 \sim c' z^{2\rho-2}
\quad\text{as}\ z\to\infty,\ z\in T .
\end{equation}
Since $E$ has completely regular growth this implies together with~\eqref{AE} that
$A$ has completely regular growth, 
with indicator given by~\eqref{h_A}. \qed

\begin{remark}\label{remark10}
It follows from~\eqref{8e} that if $E$ has infinitely many positive zeros, then
\begin{equation}\label{14a}
A(z)\sim \frac14 c'z^{2\rho-2}
\quad\text{as}\ z\to\infty,\ z\in T ,
\end{equation}
for any closed subsector $T$ of $S_0\setminus \R$.
Lemma~\ref{lemma-lindeloef},
applied to $A(z)/z^{2\rho-2}$,
shows that~\eqref{14a} in fact holds for any closed subsector $T$ of $S_0$.
An analogous result holds if $E$ has infinitely many negative zeros.

Thus we actually have a much more precise description of the asymptotics of $A$
than given by~\eqref{h_A}: In the sectors corresponding to the intervals 
where $h_A=0$ we have~\eqref{14a}.

In particular is follows from~\eqref{14a} that $A$ is non-constant.
But this can also be deduced directly from the hypothesis that $F^{-1}$ has infinitely
many singularities over $0$ or~$\infty$.
\end{remark}

\begin{remark}\label{remark11}
Suppose that $E$ has infinitely many positive zeros.
Since between two positive zeros of a solution $w$ of~\eqref{1}
there is positive local maximum or a negative local minimum of~$w$,
we deduce from~\eqref{1} that $c'>0$ in~\eqref{14a}.
This implies that $A'(x)>0$ for all large positive~$x$. 
It follows (see~\cite{Biernacki1933} or~\cite[Chapter XIV, Part~I, Theorem~3.1]{Hartman1973})
that all solutions of~\eqref{1} are bounded on the positive real axis.
In particular, $E$ is bounded there.
Alternatively, this can be obtained from 
$F_0(z)=T_{a_1}(p_j(\beta(z))$, which holds for $z$ of sufficiently
large modulus in any subsector of $S_0$.
Again an analogous result holds if $E$ has infinitely many negative zeros.

Since $E'$ has only finitely many non-real critical points by~Lemma~\ref{lemma6},
this yields that
the set of critical values of $E$ is bounded. Moreover, by the Denjoy-Carleman-Ahlfors
theorem~\cite[Chapter~5, \S~1]{GO}, $E$ has only finitely many asymptotic values. We conclude that 
$E$ is in the class $B$ consisting of all entire functions for which
the set of critical and (finite) asymptotic values is bounded.
It plays an important role in value distribution and
holomorphic dynamics~\cite{6}, as does the Speiser class.
\end{remark}

\section{Proof of Theorem~\ref{theorem4}} \label{proof-thm4}
We will use the following result~\cite[Theorem~1]{Bergweiler2009}.
\begin{lemma}\label{lemma-ldv}
Let $F$ be a meromorphic function such that the
preimage of three points belongs to the real line.
Then $F$ maps the real line into a circle, unless
\begin{equation}\label{9a}
F(z)=L\left(\frac{1-e^{i(a_1z-b_1)}}{1-e^{i(a_2z-b_2)}}\right),
\end{equation}
where $L$ is a linear-fractional transformation and $a_j,b_j\in\R$.
\end{lemma}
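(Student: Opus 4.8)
The plan is to normalise the three exceptional values, compare $F$ with its reflection in $\R$, apply the classical description of pairs of meromorphic functions sharing three values counting multiplicity, and then use the reality of the preimages a second time to pin down the resulting exponents.

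\emph{Step 1: Normalisation.} Both alternatives in the conclusion are preserved under post-composing $F$ with a linear-fractional map (absorb it into $L$, respectively replace the target circle by its image). Hence I may assume that the three points with real preimages are $0$, $1$ and $\infty$; then $F$ is holomorphic on $\C\setminus\R$ and omits $0$, $1$, $\infty$ on each component of $\C\setminus\R$. If $F$ is constant there is nothing to prove, so assume $F$ non-constant. Put $F^{*}(z):=\overline{F(\bar z)}$, again meromorphic. Since $\{0,1,\infty\}\subset\R$ and $F^{-1}(\{0,1,\infty\})\subset\R$ (so it is invariant under conjugation), comparison of Taylor (or Laurent) coefficients at a real point shows that $F$ and $F^{*}$ have the same zeros, the same $1$-points and the same poles, with equal multiplicities.

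\emph{Step 2: The exponential form.} If $F\equiv F^{*}$, then $F(\R)\subset\R\cup\{\infty\}$, so after undoing the normalisation $F(\R)$ lies in a circle. So assume $F\not\equiv F^{*}$. Then $F/F^{*}$ and $(F-1)/(F^{*}-1)$ are meromorphic with neither zeros nor poles, hence equal $e^{h}$ and $e^{k}$ for entire $h,k$ with $e^{h}\not\equiv 1$. Eliminating $F^{*}$ from $F=F^{*}e^{h}$ and $F-1=(F^{*}-1)e^{k}$ gives
\[
F=\frac{1-e^{\alpha}}{1-e^{\beta}},\qquad \alpha:=k,\quad \beta:=k-h,
\]
with $\alpha,\beta$ entire and $e^{\alpha-\beta}\not\equiv 1$; correspondingly $F^{*}=(1-e^{-\alpha})/(1-e^{-\beta})$.

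\emph{Step 3: Linearising the exponents.} It remains to prove that $\alpha$ and $\beta$, hence $h=\alpha-\beta$, are affine functions with purely imaginary coefficients; the displayed formula is then exactly the asserted one, with $a_{j},b_{j}$ read off from $\alpha,\beta$ and $L$ the inverse of the normalising map. Conjugating the formula for $F$ gives a second representation $F^{*}=(1-e^{\alpha^{*}})/(1-e^{\beta^{*}})$, where $\alpha^{*}(z):=\overline{\alpha(\bar z)}$ and $\beta^{*}(z):=\overline{\beta(\bar z)}$. Equating the two expressions for $F^{*}$ and clearing denominators yields the identity $e^{-\beta}+e^{\alpha^{*}}+e^{\beta^{*}-\alpha}=e^{-\alpha}+e^{\beta^{*}}+e^{\alpha^{*}-\beta}$. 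By Borel's theorem on sums of exponentials, once the degenerate sub-cases are excluded (they force $F\equiv 0$, $F\equiv 1$, or a vanishing denominator, all impossible here), the exponents on the two sides must agree in pairs, and the only admissible matching makes $\alpha^{*}+\alpha$ and $\beta^{*}+\beta$ constants in $2\pi i\Z$. Evaluating at real $z$ then forces $\re\alpha=\re\beta=0$ on $\R$, so $\alpha=i\widetilde g$ and $\beta=i\widetilde h$ with $\widetilde g,\widetilde h$ real entire. Now translate the reality hypothesis: writing $Z_{\phi}:=\phi^{-1}(2\pi i\Z)$ and letting $N$ be the set of common zeros of $1-e^{\alpha}$ and $1-e^{\beta}$ (the ``$0/0$'' points of the quotient), the sets $Z_{\alpha}\setminus N$, $Z_{\beta}\setminus N$, $Z_{\alpha-\beta}\setminus N$ are contained in the zeros, poles, and $1$-points of $F$ respectively, hence lie in $\R$; one deduces that the non-real points of $Z_{\alpha}$, $Z_{\beta}$ and $Z_{\alpha-\beta}$ all coincide with those of $N$. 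A counting argument based on Nevanlinna's second main theorem (which makes the $1$-points of $e^{\alpha}$ and $e^{\beta}$ carry the full characteristic) shows that $N$ cannot absorb infinitely many non-real zeros of $1-e^{\alpha}$ or $1-e^{\beta}$; consequently $\widetilde g-2\pi n$ and $\widetilde h-2\pi n$ have only real zeros for infinitely many $n\in\Z$. By the rigidity of the Laguerre--P\'olya class (equivalently, of entire Herglotz functions; in the polynomial case this is elementary, since a real polynomial of degree $\geq 2$ has non-real roots after subtracting all sufficiently large real constants of one sign), this forces $\widetilde g$ and $\widetilde h$ to be real affine, and the reality of $Z_{\alpha}$, $Z_{\beta}$ then fixes their coefficients to be real. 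This yields $\alpha(z)=i(a_{1}z-b_{1})$, $\beta(z)=i(a_{2}z-b_{2})$ with $a_{j},b_{j}\in\R$, completing the proof.

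\emph{Main obstacle.} The hard part is the end of Step~3: the hypothesis constrains only $F^{-1}(\{0,1,\infty\})$ and not the individual level sets of $\widetilde g$ and $\widetilde h$, so one must first get control of the exceptional set $N$ of common ``$0/0$'' points — ruling out that $N$ swallows all the non-real zeros of $1-e^{\alpha}$ or $1-e^{\beta}$ — before the Laguerre--P\'olya rigidity can be applied. The remaining ingredients (normalisation, the CM-sharing identity, and the Borel matching) are essentially formal computations.
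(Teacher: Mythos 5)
The paper does not prove this lemma: it is simply cited as \cite[Theorem~1]{Bergweiler2009}. There is therefore no internal argument to compare against, and your proposal has to be judged on its own merits.

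Steps~1 and~2 are correct and are the natural entry point: normalising the three exceptional values to $0,1,\infty$, noting that $F$ and $F^*(z)=\overline{F(\bar z)}$ then share these values CM, and extracting the representation $F=(1-e^\alpha)/(1-e^\beta)$. The gaps are all in Step~3, which you yourself flag as the crux, and they are substantive.
\emph{(a)} The Borel matching is incomplete. The pairing $\{-\beta,-\alpha\}$, $\{\alpha^{*},\beta^{*}\}$, $\{\beta^{*}-\alpha,\alpha^{*}-\beta\}$ is also admissible and forces $\alpha-\beta$ constant. This is not one of your listed degenerate sub-cases: it gives a nonconstant $F$ with $F/F^{*}$ a (necessarily unimodular) constant, so $\arg F$ is constant on $\R$ and $F(\R)$ lies on a line through the origin. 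That is exactly the circle alternative and must be routed to that conclusion, not excluded as impossible.
\emph{(b)} The SMT-based claim that the common ``$0/0$'' set $N$ absorbs only finitely many non-real zeros is asserted but not argued. The fact that the $1$-points of $e^{\alpha}$ and of $e^{\beta}$ carry the full characteristic says nothing by itself about how many of those $1$-points are non-real, nor about how many can coincide. Closing this would need a further structural input (such as the fact that at a non-real common zero the vanishing orders of $\tilde g-2\pi n_{1}$, $\tilde h-2\pi n_{2}$ and $\tilde g-\tilde h-2\pi(n_{1}-n_{2})$ must all coincide), and it is not clear how to convert such constraints into a counting inequality. As written this step is a genuine gap.
\emph{(c)} The Laguerre--P\'olya/Herglotz rigidity you invoke does not apply. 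That rigidity characterises real affine functions by the property that $\tilde g-c$ has only real zeros for \emph{all} real $c$ (equivalently $\tilde g(H^{+})\subset H^{+}$ or $H^{-}$). You have the property only for $c\in 2\pi\Z$ away from a finite set, i.e.\ only that $\tilde g(H^{+})\cap 2\pi\Z$ is finite, which is strictly weaker. Your polynomial argument handles polynomial $\tilde g$, but for transcendental $\tilde g$ the implication ``$\tilde g(H^{+})\cap 2\pi\Z$ finite $\Rightarrow$ $\tilde g$ affine'' is neither cited nor obvious, and must be established before the proof is complete.
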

Let $\D$ be the unit disk.
A meromorphic function  $F\colon\D\to\bC$ is called
\emph{normal} if the family $\{F\circ S\colon S\in\operatorname{Aut}(\D)\}$
is normal, where $\operatorname{Aut}(\D)$ denotes the set of biholomorphic
maps from $\D$ to~$\D$.
The following result is due to Lehto and Virtanen~\cite[Theorem~2]{Lehto1957}.
\begin{lemma}\label{lemma-lv}
Let $F\colon \D\to\bC$ be a normal meromorphic function.
Suppose that there exist $a\in\bC$ and a curve $\gamma$ ending at point
$P\in\partial\D$
such that $F(z)\to a$ as $z\to P$, $z\in\gamma$.
Then $f$ has the angular limit $a$ at~$P$.
\end{lemma}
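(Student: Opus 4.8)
The plan is to deduce the statement from the classical description of normality in terms of the hyperbolic and chordal metrics, together with two normal–family (Montel) arguments. Write $d_h$ for the hyperbolic distance on $\D$, $\chi$ for the chordal distance on $\bC$, and for a meromorphic $F$ let $F^\#=|F'|/(1+|F|^2)$ be the spherical derivative. Since $\operatorname{Aut}(\D)$ acts transitively on $\D$ and the hyperbolic metric is $\operatorname{Aut}(\D)$–invariant, Marty's criterion gives that $F$ is normal if and only if $C_0:=\sup_{z\in\D}(1-|z|^2)F^\#(z)<\infty$, and in that case the chordal length of $F\circ c$ is at most $C_0$ times the hyperbolic length of any path $c$; hence $\chi(F(z),F(w))\le C_0\,d_h(z,w)$ for all $z,w\in\D$. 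After a rotation we may assume $P=1$, and after post-composing $F$ with a Möbius self-map of $\bC$ (which changes $C_0$ only by a bounded factor, hence preserves normality) we may assume $a=0$.

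First I would settle the \emph{radial case}: if $F$ is normal and $F(r)\to 0$ as $r\to 1^-$, then $F$ has angular limit $0$ at $1$. Given $w_n\to 1$ in a fixed Stolz angle at $1$, put $r_n=|w_n|$ and let $S_n\in\operatorname{Aut}(\D)$ be the hyperbolic translation of $(-1,1)$ with $S_n(0)=r_n$. The family $\{F\circ S_n\}$ satisfies the same bound $C_0$, so a subsequence of $g_n:=F\circ S_n$ converges locally uniformly in the chordal metric to a function $g$; since $g(0)=\lim F(r_n)=0$, the limit is meromorphic (not $\equiv\infty$). For each fixed $t\in(-1,1)$ the point $S_n(t)$ lies on $(-1,1)$ and tends radially to $1$, so $F(S_n(t))\to 0$; thus $g\equiv 0$ on $(-1,1)$, hence $g\equiv 0$ on $\D$. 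Because $w_n$ lies in a Stolz angle, $d_h(w_n,r_n)$ is bounded, so $S_n^{-1}(w_n)$ stays in a compact subset of $\D$ and $F(w_n)=g_n(S_n^{-1}(w_n))\to 0$. As every subsequence has a further subsequence giving this, $F(w_n)\to 0$, which proves the radial case.

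It remains to show that a normal $F$ with asymptotic value $0$ along a curve $\gamma$ ending at $1$ actually has radial limit $0$ at $1$; this is the heart of the matter. Again I would recenter: for $\zeta_n\in\gamma$ with $\zeta_n\to 1$, pick $S_n\in\operatorname{Aut}(\D)$ with $S_n(0)=\zeta_n$ and pass to a locally uniform limit $g$ of $g_n=F\circ S_n$ (meromorphic, $g(0)=\lim F(\zeta_n)=0$). Exploiting the whole curve: if $\zeta_n'$ is the first point of $\gamma$ after $\zeta_n$ with $d_h(\zeta_n,\zeta_n')=1$, the sub-arc of $\gamma$ from $\zeta_n$ to $\zeta_n'$ pulls back under $S_n$ to an arc $\lambda_n$ in the closed unit hyperbolic ball about $0$ joining $0$ to $u_n:=S_n^{-1}(\zeta_n')$, with $d_h(0,u_n)=1$ and $g_n\to 0$ uniformly along $\lambda_n$. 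Passing to further subsequences so that $u_n\to u_\infty\neq 0$ and the $\lambda_n$ converge in the Hausdorff metric to a continuum $\Lambda$ joining $0$ and $u_\infty$, one gets $g\equiv 0$ on $\Lambda$, hence $g\equiv 0$ on $\D$; since this holds for every choice of the sequence $\zeta_n$, it follows that $F\to 0$ locally uniformly after recentering at points of $\gamma$, i.e.\ $F\to 0$ on every fixed hyperbolic neighbourhood of the tail of $\gamma$.

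The \textbf{main obstacle is the last step: promoting the conclusion from "$F\to 0$ near $\gamma$" to "$F\to 0$ along the radius".} When $\gamma$ approaches $1$ tangentially, a fixed hyperbolic neighbourhood of $\gamma$ need not meet the radius $(0,1)$, so one cannot simply connect a radial point to $\gamma$ by a path of bounded hyperbolic length. My approach here would be to deform $\gamma$ toward $(0,1)$ while preserving the asymptotic value: using that $F$ is already known to be uniformly small on unit hyperbolic balls about points of $\gamma$, one builds a chain of points stepping from $\gamma$ towards the radius along which $F\to 0$, and organises these into a curve $\gamma'$ that ends at $1$ and runs inside a Stolz angle; feeding $\gamma'$ back into the recentering/Montel argument (now with the $S_n$ essentially translating along the radius, as in the radial case) shows $F$ has radial limit $0$. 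Combined with the radial case of the second paragraph, this yields the angular limit. (This final deformation is precisely the content of the Lehto--Virtanen argument invoked in the statement, and getting it to work cleanly — keeping the step count and hyperbolic lengths under control as one nears $1$ — is where the real work lies.)
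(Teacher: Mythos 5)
The paper does \emph{not} prove this lemma; it simply cites Lehto--Virtanen~\cite[Theorem~2]{Lehto1957}. So there is no ``paper's own proof'' to compare against: you are attempting to re-derive the Lehto--Virtanen theorem from first principles.

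Your setup is correct: the Marty/Lipschitz reduction, the normalization to $P=1$, $a=0$, the recentring maps $S_n$, and the normal-family limit $g$. Your radial case is also correct, including the observation that $d_h(w_n,|w_n|)$ is bounded when $w_n$ lies in a Stolz angle. The third paragraph is likewise sound: the Hausdorff limit of the arcs $\lambda_n\subset\overline{B_h(0,1)}$ is a nondegenerate continuum joining $0$ to $u_\infty$, a meromorphic limit $g$ vanishing on it must vanish identically, and this shows that $F\to0$ uniformly on every \emph{fixed}-radius hyperbolic neighbourhood of the tail of $\gamma$.

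But the final step is a genuine gap, which you have correctly flagged yourself. If $\gamma$ approaches $1$ tangentially enough that $d_h\bigl([0,1),\gamma\bigr)\to\infty$ near $1$ --- which is perfectly possible --- then \emph{no} fixed-radius hyperbolic neighbourhood of $\gamma$ meets the radius, so everything established in the third paragraph says nothing at all about $F$ on $[0,1)$. Your proposed remedy (stepping from $\gamma$ toward the radius along a chain where $F$ stays small, then feeding a new curve $\gamma'$ in a Stolz angle into the radial case) is aimed at the right target, but as written it is not an argument: once you step a single hyperbolic unit outside the $R$-neighbourhood of $\gamma$ you have no control on $F$, and nothing in the preceding paragraphs supplies the mechanism that lets such a chain make progress toward the radius rather than wandering back along $\gamma$. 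Bridging this tangential gap \emph{is} the content of the Lehto--Virtanen theorem that the paper is invoking, and it needs a different tool than the ball-chaining bootstrap you have set up --- the classical proofs work instead with the thin Jordan domain cut off from $\mathbb{D}$ by the tail of $\gamma$ and combine a two-constants/Lindel\"of-type estimate inside it with the spherical-derivative bound. As it stands the proposal does not prove the lemma.
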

\begin{proof}[Proof of Theorem~\ref{theorem4}]
Let $w_1,w_2,w_3$ be pairwise linearly independent
solutions of~\eqref{1} with only real zeros.
Without loss of generality we may assume that $w_3=w_1-w_2$, since otherwise
we can replace $w_1$ and $w_2$ by suitable multiples.
As before we put $F=w_2/w_1$. Then $F$ is a locally univalent meromorphic
function which has only real zeros, $1$-points and poles.

In view of~\eqref{Sch} we have to show that the Schwarzian derivative
of~$F$ is constant.
This is the case if $F$ has the form~\eqref{9a}.
Using Lemma~\ref{lemma-ldv} we may thus assume that $F$  maps $\R$ to a circle
$C$.

The Schwarzian derivative of a linear-fractional transformation is $0$.
So we may assume that $F$ is not a linear-fractional transformation.
Then the inverse of $F$ has a singularity. Since $F$ is locally univalent,
this means that $F$ has an asymptotic value~$a$. Let $\gamma$ be an
asymptotic path for~$a$.
Then $\overline{\gamma}$ is an asymptotic path for~$a^*$, the point
symmetric to $a$ with respect to the circle $C$.
If $\gamma$ crosses the real axis infinitely often, then $a=a^*$. 
In this case we can build from $\gamma$ and $\overline{\gamma}$
an asymptotic path which is contained in the upper half-plane~$H^+$.
If $a\neq a^*$ is non-real, then (the tail of) one of the curves $\gamma$
or
$\overline{\gamma}$ is in $H^+$ anyway, and we may assume without loss of
generality that this holds for~$\gamma$. Thus $F$ has the asymptotic value
$a$ with an asymptotic path in the upper half-plane.

As $F$ omits the values $0$, $1$ and $\infty$ in the upper half-plane, $F$ is
normal there.
Lemma~\ref{lemma-lv} yields that $F(z)\to a$ as $|z|\to\infty$,
$\varepsilon<\arg z<\pi-\varepsilon$.
In particular, $F$ has only one asymptotic value which has an asymptotic
path in the upper half-plane.
Overall we see that $F$ has at most two asymptotic values,
namely $a$ and~$a^*$.
Thus the inverse of $F$ has at most two singularities.
In fact, this also shows that $a\not\in C$ so that $a\neq a^*$,
since otherwise $F^{-1}$ would have only one
singularity, which is only possible for a linear-fractional map. 
We conclude that $F\colon\C\to\bC\setminus\{a,a^*\}$ is a covering.
With $L(z)=(z-a)/(z-a^*)$ we deduce that
$L\circ F\colon\C\to\C^*$ is a covering. Thus there exists $c\in\C^*$ and
$d\in\C$ such that $(L\circ F)(z)=\exp(cz+d)$.
It follows that the Schwarzian of $L\circ F$ and hence of $F$ is constant.
Thus $A$ is constant.
\end{proof}

\end{document}